\DeclarePairedDelimiterX{\Iintv}[1]{\llbracket}{\rrbracket}{\iintvargs{#1}}
\NewDocumentCommand{\iintvargs}{>{\SplitArgument{1}{,}}m}
{\iintvargsaux#1} %
\NewDocumentCommand{\iintvargsaux}{mm} {#1\mkern1.5mu,\mkern1.5mu#2}
\newcommand{\lf}{\lfloor}
\newcommand{\rf}{\rfloor}
\newcommand{\R}{\mathbb{R}}
\newcommand{\N}{\mathbb{N}}
\newcommand{\e}{\varepsilon}
\newcommand{\la}{\langle}
  \newcommand{\ra}{\rangle}
\newcommand{\E}{\mathbb{E}}
\newcommand{\Z}{\mathbb{Z}}
\renewcommand{\S}{\mathbb{S}}
\renewcommand{\P}{\mathbb{P}}
\renewcommand{\L}{\mathbb{L}}
\newcommand{\rmT}{\mathrm{T}}
\def\ben#1{\begin{equation}#1\end{equation}}
\def\al#1{\begin{align*}#1\end{align*}}
\def\aln#1{\begin{align}#1\end{align}}
\newtheorem{lem}{Lemma}[section]
\newtheorem{remark}[lem]{Remark}
\newtheorem{thm}[lem]{Theorem}
\newtheorem{cor}[lem]{Corollary}
\newtheorem{assum} {Assumption}
\title[A variational formula for large deviations in FPP]
{A variational formula for large deviations in First-passage percolation under tail estimates}
\date{\today}
\author{Cl\'ement Cosco} 
\address[Cl\'ement Cosco]
{Weizmann Institute of Science, Israel.}
\email{clement.cosco@gmail.com}
\author{Shuta Nakajima} 
\address[Shuta Nakajima]
{University of Basel, Basel, Switzerland}
\email{shuta.nakajima@unibas.ch}
\keywords{Eden growth model, First-passage percolation, Large deviations.}
\subjclass[2010]{Primary 60K37; secondary 60K35; 82A51; 82D30}
\begin{document}
\maketitle

\begin{abstract}

Consider the first passage percolation on the $d$-dimensional lattice $\mathbb{Z}^d$ with identical and independent weight distributions and the first passage time ${\rm T}$. In this paper, we study the upper tail large deviations  $\mathbb{P}({\rm T}(0,nx)>n(\mu+\xi))$, for  $\xi>0$ and $x\neq 0$   with a time constant $\mu$, for weights that satisfy a tail assumption $ \mathbb{P}(\tau_e>t)\asymp \beta\exp{(-\alpha t^r)}.$ When $r\leq 1$ (this includes the well-known Eden growth model), we show that the upper tail large deviation decays as $\exp{(-(2d\alpha\xi^r +o(1))n)}$. 
When $1< r\leq d$, we find that the rate function can be naturally described by a variational formula, called the discrete p-Capacity,  and we study its asymptotics. {The case $r=d$ is critical and logarithmic corrections appear.} For $r\in (1,d)$, we show that the large deviation event $\{{\rm T}(0,nx)>n(\mu+\xi)\}$ is described by a localization of high weights around the endpoints. The picture changes for $r\geq d$ where the configuration is not anymore localized. 
\end{abstract}

\section{Introduction and Main results}
\subsection{Introduction}

First-passage percolation (FPP) was first introduced by Hammersley and Welsh in 1965 \cite{HW65}, as a dynamical version of the percolation model. Since then, it has been extensively studied both in mathematics and physics. There are several important and interesting aspects of FPP. First, FPP naturally defines a random metric space. Indeed, the objects of interest called the first passage time and the optimal path, correspond to a metric and a geodesic in a certain random metric space. Second, FPP is expected to belong to the KPZ universality class. Moreover, it is widely believed that the boundary of a ball defined by the first passage time behaves like a KPZ equation \cite{KPZ86,KS91}. In those studies, many important methods and phenomena have been discovered. See \cite{ADH} for more detailed backgrounds.\\

In this paper, we consider the first passage percolation on the lattice $\L^d=(\Z^d,E(\Z^d))$ with $d\geq 2$. Given nearest-neighbor vertices $v,w$, we write $\la v,w \ra{=\{v,w\}}$ the undirected edge connecting $v$ and $w$. The model is defined as follows. To each edge $e\in E(\Z^d)$, we assign a non-negative random variable $\tau_e$. We assume that the collection $\tau=(\tau_e)_{e\in E(\Z^d)}$ is identically and independently  distributed.

A sequence $(x_i)_{i=1}^{l}$ is said to be a path if each successive pair is nearest neighbor, i.e. $|x_i-x_{i+1}|_1=1$ for any $i$. We note that a path is seen both as a set of vertices and a set of edges with some abuse of notation. Given a path $\gamma$, we define the passage time of $\gamma$ as
$${\rm T}(\gamma)=\sum_{e\in\gamma}\tau_e.$$
For $x\in\R^d$, we set $\lf x\rf=(\lf x_1\rf,\cdots,\lf x_d\rf)$ where $\lf a\rf$ is the greatest integer less than or equal to $a$ for $a\in\R$. Given $x,y\in\R^d$, we define the {\em first passage time} between $x$ and $y$ as
$${\rm T}(x,y)=\inf_{\gamma:\lf x\rf\to \lf y\rf}{\rm T}(\gamma),$$
where the infimum is taken over all finite paths $\gamma$ starting at $\lf x\rf$ and ending at $\lf y\rf$. A path attaining the infimum is called an optimal path.% Let us denote $\O(x,y)$ the set of all optimal paths from $\lf x\rf$ to $\lf y\rf$.\\

A striking feature of the first passage time is the sub-additivity, namely, for any $x,y,z\in\Z^d$,
\ben{\label{subadd}
\rmT(x,z)\leq \rmT(x,y)+\rmT(y,z).
}
Indeed, the right-hand side is the infimum of the first passage times over the set of all finite paths from $\lf x\rf$ to $\lf z\rf$ passing through $\lf y\rf$, which is a subset of the set of all finite paths from $\lf x\rf$ to $\lf z\rf$. This yields \eqref{subadd}. In particular, $(\Z^d,\rmT)$ is a pseudometric, and if, in addition, $\P(\tau_e=0)=0$, then this is exactly a metric.

Using the sub-additivity, Kingman \cite{King73} proved that if $\E \tau_e<\infty$, then for any $x\in\R^d$, there exists a non-random constant $\mu(x)\in [0,\E\tau_e]$ (called the {\em time constant}) such that

\begin{align}\label{kingman}
\mu(x)=\lim_{t\to\infty}t^{-1} {\rm T}(0,t x)=\lim_{t\to\infty}t^{-1} \E[{\rm T}(0,t x)]\hspace{4mm}\text{a.s.}
\end{align}
%This convergence can be seen as a law of large numbers.

In this paper, we are interested in the upper tail large deviations:
\begin{equation} \label{eq:ULD}
\P({\rm T}(0,nx)>n(\mu(x)+\xi)),\quad \xi >0.
\end{equation}
It has been observed in \cite{CGM09} that the properties of \eqref{eq:ULD} may depend on the tails of the distributions of $\tau_e$. In this paper, we consider the following condition:

% store current value of theorem counter
%\setcounter{assu}{0} %assign desired value to theorem counter
% locally redefine the representation of the theorem counter

\begin{assum} \label{assumA}
there exist $\beta_1,\beta_2,\alpha>0$ and $r\in(0,\infty)$ such that for $t\geq 0$,
\begin{equation}\label{cond:distr}
\beta_1\exp{(-\alpha t^r)}\leq \P(\tau_e>t)\leq \beta_2\exp{(-\alpha t^r)}.
\end{equation}
\end{assum}
% where $\rmp_c(d)$ stands for the critical probablity of $d$-dimensional percolation.
Note that when $\tau_e$ follows an exponential distribution, the model is also called the Eden growth model \cite{Eden}, in which case $B_t=\{x\in\R^d:{\rmT(0,x)\leq t} \}$ is a Markov process. For a generalization of the condition \eqref{cond:distr}, see Remark~\ref{remark:1} and Remark~\ref{remark:2} (see also \cite{CGM09}).

When $r\in(d,\infty)$ with $d \geq 2$ or $r=d=2$, %\textcolor{red}{and $\P(\tau_e=0)<p_c$ where $p_c$ is the critical percolation probability},
the authors in \cite{CGM09} have given upper and lower bounds on \eqref{eq:ULD}.
In the present work, we give estimates for general $r>0$ and $d\geq 2$, and we obtain matching upper and lower bounds when $r\in (0,d]$. In the latter case, we find that the rate function is a power of $\xi$, times a constant coming from a variational formula.

\subsection{Main results}
\begin{thm}\label{thm:main1}
Suppose Assumption \ref{assumA} with $r\leq1$. Then for all $\xi>0$ and $x\in\R^d\backslash\{0\}$,
\ben{
\lim_{n\to\infty}\frac{1}{n^r}\log{\P({\rm T}(0,nx)>n(\mu(x)+\xi))}=-2d\alpha \xi^r,\label{thm:maineq1}
}
where $\alpha$ and $r$ are in \eqref{cond:distr}.
\end{thm}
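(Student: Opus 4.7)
The rate $2d\alpha\xi^r$ involves the coordination number $2d$ of $\Z^d$, which strongly suggests that the extremal configuration realising the event $\{\rmT(0,nx)>n(\mu(x)+\xi)\}$ has the $2d$ edges at the origin all carrying weight of order $n\xi$: this is the cheapest way to force every path out of $0$ to accumulate an additional $n\xi$ cost. I will organise the proof around this picture.

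\textbf{Lower bound.} For a small $\epsilon>0$, introduce the two increasing events
\[
\kE_1=\{\tau_{\la 0,y\ra}\ge n(\xi+2\epsilon)\text{ for every }y\sim 0\},\qquad \kE_2=\{\rmT(y,nx)\ge n(\mu(x)-\epsilon)\text{ for every }y\sim 0\}.
\]
On $\kE_1\cap\kE_2$, the dynamic-programming identity $\rmT(0,nx)=\min_{y\sim 0}[\tau_{\la 0,y\ra}+\rmT(y,nx)]$ yields $\rmT(0,nx)\ge n(\mu+\xi+\epsilon)>n(\mu+\xi)$. Independence of the $2d$ edges at $0$ together with the left inequality of \eqref{cond:distr} gives $\P(\kE_1)\ge \beta_1^{2d}\exp(-2d\alpha n^r(\xi+2\epsilon)^r)$, while $\P(\kE_2)\to 1$ by Kingman's theorem \eqref{kingman} applied at each neighbor (using $\rmT(y,nx)\stackrel{d}{=}\rmT(0,nx-y)$ and $(nx-y)/n\to x$). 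Both events being increasing in the weights, FKG yields $\P(\kE_1\cap\kE_2)\ge \P(\kE_1)\P(\kE_2)$; dividing by $n^r$, sending $n\to\infty$ and then $\epsilon\downarrow 0$ produces $\liminf n^{-r}\log \P(\rmT(0,nx)>n(\mu+\xi))\ge -2d\alpha\xi^r$.

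\textbf{Upper bound.} The same identity gives $\{\rmT(0,nx)>n(\mu+\xi)\}\subseteq\bigcap_{y\sim 0}\{\tau_{\la 0,y\ra}+\rmT(y,nx)>n(\mu+\xi)\}$. I split by a threshold $\epsilon$: either (i) every $\tau_{\la 0,y\ra}>n(\xi-\epsilon)$, or (ii) some neighbor $y_0$ has $\tau_{\la 0,y_0\ra}\le n(\xi-\epsilon)$, forcing $\rmT(y_0,nx)>n(\mu+\epsilon)$. Event (i) is handled by the right inequality of \eqref{cond:distr} and independence to give $\beta_2^{2d}\exp(-2d\alpha n^r(\xi-\epsilon)^r)$, which matches the target rate after sending $\epsilon\downarrow 0$.

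\textbf{Main obstacle.} Case (ii) is the technical core: a naive circular appeal to the theorem on $\P(\rmT(y_0,nx)>n(\mu+\epsilon))$ yields only rate $2d\alpha\epsilon^r$, which is too weak whenever $\epsilon<\xi$. To break the circle I would exploit the hypothesis $r\le 1$ through the concavity of $t\mapsto t^r$: for a cut with $k$ edges, enforcing a total weight $t$ costs at least $k^{1-r}\alpha t^r$, which is minimised at the smallest possible cut, namely the $2d$ edges at $0$. Concretely, enlarge the single-vertex cut to a slowly growing ball $B_{k(n)}(0)$ and use the analogous identity $\rmT(0,nx)=\min_{z\in\partial B_{k(n)}(0)}[\rmT(0,z)+\rmT(z,nx)]$; the event then forces either a uniformly expensive shell, whose cheapest realisation under $r\le 1$ is still concentrated at the $2d$ edges at the origin (by the concavity comparison above together with an elementary combinatorial argument), or some exit vertex $z$ with $\rmT(z,nx)$ anomalously large, which is controlled by a concentration estimate combined with a union bound over the $O(k(n)^{d-1})$ exit points. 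Letting $k(n)\to\infty$ slowly and then $\epsilon\downarrow 0$ makes case (ii) negligible against (i) and concludes $\limsup n^{-r}\log\P\le -2d\alpha\xi^r$, matching the lower bound.
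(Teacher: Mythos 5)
Your lower bound is correct and takes a slightly different route from the paper: you use the FKG inequality for the two increasing events $\kE_1$ and $\kE_2$, whereas the paper makes the analogous events genuinely independent by replacing $\rmT(y,nx)$ with the restricted passage time $\rmT_{E(\Z^d)\setminus\tilde E_1}(y,nx)$ that avoids the $2d$ edges at the origin (and then verifies the $\mu$-law for this restricted time by sandwiching with $\rmT_n$). Both work; your version is cleaner, the paper's version avoids invoking FKG. The case (i) part of your upper bound is the easy part and is fine.

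The genuine gap is in case (ii) of your upper bound. You propose to union bound $\P(\rmT(z,nx)>n(\mu+\epsilon))$ over the $O(k(n)^{d-1})$ exit points $z\in\partial B_{k(n)}(0)$, controlled by ``a concentration estimate.'' The only concentration estimate available at this scale (Lemma~\ref{lem:Zhang} in the paper, a variant of Chow--Zhang) gives $\P(\rmT(z,nx)>n(\mu+\epsilon))\le\exp(-c(\epsilon)n^r)$ with a constant $c(\epsilon)$ that you do not control. A union bound over $k(n)^{d-1}$ points gives $\exp(-c(\epsilon)n^r+(d-1)\log k(n))$, and whenever $c(\epsilon)<2d\alpha\xi^r$ this fails to be $o(\exp(-2d\alpha\xi^r n^r))$ no matter how slowly $k(n)\to\infty$; the union bound only hurts, and sending $k(n)\to\infty$ does not improve $c(\epsilon)$. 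The missing idea is an \emph{amplification}: the paper takes $\Theta(M)$ pairwise disjoint slabs $\S_v$ joining $D_M(0)$ to $D_M(n\mathbf e_1)$ and bounds the event that \emph{every} slab is slow by $\exp(-c(\epsilon)(M/3K)n^r)$, where the prefactor $M/3K$ can be made larger than any fixed target rate by choosing $M$. Given a cheap slab, the event $\{\rmT_n>n(\mu+\xi)\}$ forces $\rmT(0,x)+\rmT(y,n\mathbf e_1)\ge(\xi-\epsilon)n$ for some $x\in D_M(0)$, $y\in D_M(n\mathbf e_1)$, which is then bounded by a union over $O(M^{2d})$ pairs together with $2d$ \emph{edge-disjoint} short paths from $0$ to $x$ and from $y$ to $n\mathbf e_1$, producing a product of $2d$ independent terms each of size $\exp(-(1-\epsilon)\alpha((\xi-\epsilon)n)^r)$; this is where the factor $2d$ in the rate comes from. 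Your sketch contains neither the disjoint-slab amplification nor the $2d$ edge-disjoint path factorization, and the concavity heuristic alone does not substitute for them.
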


\begin{remark}\label{remark:1}
For $r<1$, we can weaken Assumption \ref{assumA} as follows. Suppose that there exist slowly varying functions $\beta_1(t),\beta_2(t),\alpha_1(t),\alpha_2(t)$ and $0<r<1$ such that $\displaystyle\lim_{t\to\infty}\frac{\alpha_1(t)}{\alpha_2(t)}=1$ and for $t\geq 0$,
\ben{\label{cond-distr2}
% \P(\tau_e=0)<{\rm p}_c(d)\text{ and }
\beta_1(t)\exp{(-\alpha_1(t)t^r)}\leq \P(\tau_e>t)\leq \beta_2(t)\exp{(-\alpha_2(t)t^r)},
}
where a measurable function $f(t):[0,\infty)\to(0,\infty)$ is said to be slowly varying if for any $a>0$,
$$\lim_{t\to\infty}\frac{f(at)}{f(t)}=1.$$
Then, \eqref{thm:maineq1} is replaced by the following:
\begin{equation}\label{thm:main2}
\lim_{n\to\infty}\frac{1}{\alpha_1(n)n^r}\log{\P({\rm T}(0,nx)>n(\mu(x)+\xi))}=-2d \xi^r.
\end{equation}
When $r=1$, if, in addition, $\alpha_1(t)$ is bounded away from $0$ and $\infty$, then the same holds. See Section~\ref{slow-vary} for the proof.
\end{remark}
\begin{remark}
Let us take $r=1$ and slowly varying functions $\alpha(t),\beta(t)$ such that $t \longrightarrow \alpha(t) t,\,\beta(t)^{-1}$ are strictly increasing and
$$0<\alpha_1:=\varliminf_t \alpha(t)<\alpha_2:=\varlimsup_t \alpha(t)<\infty.$$
Let us consider the distribution satisfying %{maybe add a constant $c>0$ in factor of the exponential?}
$$\P(\tau_e>t)=\beta(t)\exp{(-\alpha(t)t)}.$$
By the previous remark,
\aln{\label{thm:main3}
-2d \xi \alpha_2 &= \varliminf_{n\to\infty}\frac{1}{n}\log{\P({\rm T}(0,nx)>n(\mu(x)+\xi))}\\
&<\varlimsup_{n\to\infty}\frac{1}{n}\log{\P({\rm T}(0,nx)>n(\mu(x)+\xi))}=-2d \xi \alpha_1.
}
% Hence, the rate function with a natural scaling does not always exist.
Hence, in general, the extra $\alpha_1(n)$ scaling factor in \eqref{thm:main2} cannot be omitted.
\end{remark}

We turn to the case $r>1$. Let
\begin{equation*}
D_M(x)=\{y\in \mathbb Z^d: |y-x|_\infty \leq M\},
\end{equation*} be the closed ball in $\mathbb Z^d$ of radius $M$. Let
\begin{equation*}
\partial D_M(x) =\{y\in \mathbb Z^d: |y-x|_\infty = M\},
\end{equation*}
denote its boundary and %Given nearest-neighbor vertices $v,w$, we write $\la v,w \ra$ the edge connecting $v$ and $w$.
\begin{equation*}
E_{M}=\{\la x,y\ra :x,y\in D_M(0),\,|x-y|_1=1\},
\end{equation*}
be the set of edges included in $D_M(0)$.  We define
\begin{equation*}
\begin{aligned}
&\lambda_{d,r}(M)= \\
&
\inf_{f: D_M(0)\to \R}\left\{\sum_{\langle x, y\rangle \in E_M} |f(x)-f(y)|^{r}\ \middle|~\forall x\in \partial D_M(0),\,f(x)\geq 1,\,f(0)=0\right\}.
\end{aligned}
\end{equation*}
Since $M\to\lambda_{d,r}(M)$ is non-increasing, we can consider
\begin{equation}
\label{eq:def_lambdaLimit}\lambda_{d,r} = \lim_{M\to\infty} \lambda_{d,r}(M).
\end{equation}

\begin{remark}
This $\lambda_{d,r}$ is closely connected to the notion of p-Capacity \cite[Chapter 2]{Flucher}. Hence, we call it the discrete r-Capacity in this article. Note that when $r=2$, $\lambda_{d,2}=1-\pi_d$, where $\pi_d$ is the return probability of the simple random walk of dimension $d$, see \cite[pp. 25 \& 36]{LP16}.
\end{remark}

\begin{thm} \label{th:rLessThandBig1} Suppose Assumption \ref{assumA} with $1<r<d$. For all $\xi>0$ and $x\in\R^d\backslash\{0\}$,
\begin{equation}\label{maineq2}
\lim_{n\to\infty} \frac{1}{n^r} \log \P\left({\rm T}(0,nx) \geq (\mu(x)+\xi) n \right)= -\alpha 2^{1-r} \lambda_{d,r} \,\xi^r.
\end{equation}
\end{thm}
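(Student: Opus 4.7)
The plan is to prove matching upper and lower bounds for $\frac{1}{n^r}\log\P(T(0,nx)\geq(\mu(x)+\xi)n)$, both governed by the following duality between the variational problem defining $\lambda_{d,r}$ and the first-passage time across $D_M(0)$: any configuration for which $T^{D_M(0)}(0,\partial D_M(0))\geq\Lambda$ (here $T^{D_M(0)}$ denotes the first-passage time restricted to paths in $D_M(0)$) must satisfy $\sum_{e\in E_M}\tau_e^r\geq \lambda_{d,r}(M)\Lambda^r$, and thus costs in probability roughly $\exp(-\alpha\lambda_{d,r}(M)\Lambda^r)$. Since $t\mapsto t^r$ is strictly convex for $r>1$, splitting the excess $\xi$ evenly between the two endpoints $0$ and $nx$ is strictly cheaper than concentrating it at one, producing the factor $2^{1-r}=2(1/2)^r$.

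\textbf{Lower bound.} Fix $\epsilon>0$ and $M$ large, and let $f$ be a near-minimizer for $\lambda_{d,r}(M)$: $f(0)=0$, $f\geq 1$ on $\partial D_M(0)$, and $\sum_{\langle u,v\rangle\in E_M}|f(u)-f(v)|^r\leq \lambda_{d,r}(M)+\epsilon$. Impose that for every edge $e=\langle u,v\rangle\in E_M$ around $0$, $\tau_e\geq (1+\epsilon)(n\xi/2)|f(u)-f(v)|$, and analogously (with a translate of $f$) for edges of $E_M$ around $nx$. For $n$ large enough so $D_M(0)\cap D_M(nx)=\emptyset$, independence and the lower tail of Assumption~\ref{assumA} give
\[
\P(\mathrm{event})\geq \beta_1^{2|E_M|}\exp\bigl(-2\alpha(1+\epsilon)^r(n\xi/2)^r[\lambda_{d,r}(M)+\epsilon]\bigr).
\]
On this event, any path from $0$ to $nx$ must cross both balls and accumulates at least $(1+\epsilon)(n\xi/2)$ in each; combined with the shape-theorem estimate $T(\partial D_M(0),\partial D_M(nx))\geq \mu(x)n-o(n)$ (holding with probability tending to $1$ and, being governed by edges outside the modified region, independent of the modification), we deduce $T(0,nx)\geq (\mu(x)+\xi)n$ with probability $(1-o(1))\P(\mathrm{event})$. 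Taking $n^{-r}\log$ and sending $n\to\infty$, then $\epsilon\to 0$, then $M\to\infty$ yields $\liminf n^{-r}\log\P\geq -\alpha 2^{1-r}\lambda_{d,r}\xi^r$.

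\textbf{Upper bound.} The key inequality is proved by setting $\phi(v):=T^{D_M(0)}(0,v)$ and $f:=\phi/\Lambda$: $f$ is admissible in the variational problem and $|f(u)-f(v)|\leq \tau_e/\Lambda$, so $\lambda_{d,r}(M)\leq \Lambda^{-r}\sum_{e\in E_M}\tau_e^r$. The upper tail in \eqref{cond:distr} yields $\E[e^{\theta\tau_e^r}]\leq C_\theta<\infty$ for $\theta<\alpha$; Chernoff applied to $\sum_{e\in E_M}\tau_e^r$, together with the key inequality, gives, after $\theta\nearrow\alpha$,
\[
\P\bigl(T^{D_M(0)}(0,\partial D_M(0))\geq \xi_1 n\bigr)\leq \exp\bigl(-\alpha\lambda_{d,r}(M)\xi_1^r n^r(1+o(1))\bigr),
\]
(the factor $C_\theta^{|E_M|}$ is subexponential in $n^r$ since $|E_M|$ is fixed), and analogously for $S_{nx}:=T^{D_M(nx)}(\partial D_M(nx),nx)$ by translation invariance. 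Decomposing the optimal path at its first exit from $D_M(0)$ and its last entry into $D_M(nx)$ (ordered for large $n$ since the balls are disjoint) gives $T(0,nx)\geq S_0+\inf_{y\in\partial D_M(0),\,z\in\partial D_M(nx)}T(y,z)+S_{nx}$; combined with a matching upper bound obtained by concatenating in-ball optimal paths, this reduces the event $\{T(0,nx)\geq(\mu(x)+\xi)n\}$ to $\{S_0+S_{nx}\geq \xi n-o(n)\}$, modulo a bulk exception discussed below. The independence of $S_0$ and $S_{nx}$ (disjoint supports) and a discretized union bound over splits $\xi_1+\xi_2\geq\xi$ then give
\[
\limsup_{n\to\infty} n^{-r}\log\P\leq -\alpha\lambda_{d,r}(M)\inf_{\xi_1+\xi_2=\xi}(\xi_1^r+\xi_2^r)=-\alpha 2^{1-r}\lambda_{d,r}(M)\xi^r,
\]
the infimum being attained at $\xi_1=\xi_2=\xi/2$ by the strict convexity of $t\mapsto t^r$ for $r>1$. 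Sending $M\to\infty$ completes the proof.

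\textbf{Main obstacle.} The delicate step is controlling the ``bulk'' first-passage time $\inf_{y\in\partial D_M(0),z\in\partial D_M(nx)}T(y,z)$: as a first-passage time over a segment of length $\sim n$, its own upward deviations could a priori contribute at the same scale $\exp(-\Theta(n^r))$ and spoil the reduction to two endpoint contributions. Handling this likely demands either a bootstrap (first establishing a suboptimal $\P\leq \exp(-c'n^r)$ and then iterating toward the sharp constant) or a multi-scale renormalization that absorbs any bulk deviation into endpoint localizations inside enlarged balls. A secondary technicality is showing that the discretization of the split $(\xi_1,\xi_2)$ in the union bound contributes only subexponentially in $n^r$.
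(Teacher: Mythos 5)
Your lower-bound argument is essentially the paper's (which works with the pathwise formulation $\lambda^{\rm P}_{d,r}$ and a minimizing $(t_e^\star)$ rather than a near-minimizing potential $f$, but by Theorem~\ref{thm: path formalization} these are interchangeable), and it is correct. The key inequality in your upper bound, $\lambda_{d,r}(M)\leq \Lambda^{-r}\sum_{e\in E_M}\tau_e^r$ for $\Lambda=T^{D_M(0)}(0,\partial D_M(0))$, is also correct and is in the spirit of what the paper does via Theorem~\ref{thm: path formalization} and the stochastic-domination argument around \eqref{twofactors}.

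The gap is precisely the ``main obstacle'' you flag at the end, and it is fatal as stated. With a \emph{fixed} localization radius $M$, the bulk term $\inf_{y\in\partial D_M(0),\,z\in\partial D_M(nx)}T(y,z)$ must be shown $\leq (\mu(x)+\epsilon)n$ outside a negligible event. The only general bound one has is via a constant number ($\sim (M/K)^{d-1}$) of independent slabs, each of which fails to be fast with probability $e^{-cn}$ (Lemma~\ref{lem:Zhang}); the union bound then gives $\P(\text{bulk bad})\geq e^{-c(M/K)^{d-1}n}$, which is \emph{larger} than the target $e^{-\Theta(n^r)}$ whenever $r>1$. So the exception term swallows the main term and the reduction to $S_0+S_{nx}$ yields no nontrivial upper bound on $n^{-r}\log\P$. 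The resolution in the paper is not a cosmetic adjustment: the localization radius is taken to grow as $\ell_M(n)\asymp Mn^{(r-1)/(d-1)}$, which is tuned so that simultaneously (i) $\lambda_{d,r}(2\ell_M(n))\to\lambda_{d,r}$ (sharp constant), (ii) $|E_{2\ell_M(n)}|\asymp n^{d(r-1)/(d-1)}=o(n^r)$ (your Chernoff prefactor remains subexponential), and (iii) the number of independent slabs is $\sim(\ell_M(n)/K)^{d-1}$, giving $\P({}^c B_2)\leq \exp(-cM^{d-1}n^r)$, which can be made negligible relative to the endpoint term by taking $M$ large. Enlarging the balls in turn forces one to control the passage time along shells $I_i^{[j]}$ of the (now $n$-dependent) boxes; the paper handles this via Lemma~\ref{rem:face} and the a priori tail estimate Theorem~\ref{prop1}, which is proved by induction on the dimension. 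This whole mechanism (growing scale, shell events, dimension induction) is the substantive content of the upper bound and is absent from your proposal. Naming the difficulty is a good diagnostic, but the proof as written does not close it.

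Two smaller points. First, in the lower bound you invoke independence of the modification from the bulk event; the paper instead bounds the connection cost through an auxiliary event (the $U_n^{[j]}$'s) to avoid having to argue about the restricted bulk metric, which you would need to do carefully since a path from $\partial D_M(0)$ to $\partial D_M(nx)$ need not avoid the balls. Second, in the upper bound the step ``reduces the event to $\{S_0+S_{nx}\geq \xi n-o(n)\}$'' requires an upper bound $T(0,nx)\leq S_0+(\text{connection})+(\text{bulk})+(\text{connection})+S_{nx}$, not just the lower bound $T(0,nx)\geq S_0+\text{bulk}+S_{nx}$; the connection terms (from the $S_0$-optimal exit point to the entry point of the bulk path) are exactly what the shell events $A_2^{[j]}$ control in the paper.
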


Before turning to the case $r=d$, we introduce the notation:
\al{
\kappa_{d,d}:={\rm vol}_{d-1}\left(\left\{x\in \R^d:~|x|_{\frac{d}{d-1}} =1\right\}\right),
}
where ${\rm vol}_{d-1}$ is the $d-1$ dimensional volume and $|x|_{s}=(\sum_{i=1}^d |x_i|^{s})^{\frac{1}{s}}$.
%We define:
%\[\bar{\lambda}_{d,d} = \varlimsup_{n\to\infty} %\frac{(\log n)^{d-1}}{n^d}\lambda_{d,r}(\ell_{d,d}(n)), %\ \underline{\lambda}_{d,d} = %%\varliminf_{n\to\infty} \frac{(\log n)^{d-1}}{n^d}%\lambda_{d,r}(\ell_{d,d}(n)).\]

\begin{thm} \label{th:requalsd} Suppose Assumption \ref{assumA} with $r=d$. For all $\xi>0$ and $x\in\R^d\backslash\{0\}$, there exists $M>0$ such that
\begin{equation}
\begin{aligned}
\lim_{n\to\infty} \frac{1}{n^d\,\lambda_{d,d}(n)} \log \P\big({\rm T}(0,nx)\geq (\mu+\xi) n \big)
= -\alpha 2^{1-d} \, \xi^d.\label{main thm for d=r}
\end{aligned}
\end{equation}
\end{thm}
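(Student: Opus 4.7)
The plan is to mirror the strategy of Theorem~\ref{th:rLessThandBig1} with a crucially different choice of box scale. At the critical exponent $r=d$ the variational quantity $\lambda_{d,d}(M)$ no longer stabilizes to a strictly positive limit: one has $\lambda_{d,d}(M)\to 0$ as $M\to\infty$ (logarithmically slowly). The rate function must therefore be built from $\lambda_{d,d}(\ell_M(n))$ itself, with the scale $\ell_M(n) = \lceil Mn/(1+\log n)\rceil$ tuned as the largest scale that is $o(n)$ (so that the two endpoint boxes stay disjoint and do not devour the bulk of the geodesic) while keeping its edge-count overhead negligible.

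\textbf{Lower bound.} Fix $\eta > 0$ and let $f^\star$ be a near-minimizer of $\lambda_{d,d}(\ell_M(n))$. Consider the event $A_0$ that every edge $\la x,y \ra$ with $x,y\in D_{\ell_M(n)}(0)$ satisfies
\[
\tau_{\la x,y\ra} \geq \frac{(1+\eta)\xi n}{2}\,\bigl|f^\star(x)-f^\star(y)\bigr|,
\]
and let $A_{\lf nx\rf}$ be the analogous event translated to the other endpoint; the two are independent once $n$ is large. By telescoping, $A_0$ forces every path from $0$ to $\partial D_{\ell_M(n)}(0)$ to have passage time at least $(1+\eta)\xi n/2$, and symmetrically for $A_{\lf nx\rf}$. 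Applying \eqref{cond:distr} edge-by-edge yields
\[
\log \P(A_0)\geq |E_{\ell_M(n)}|\log\beta_1 - \alpha\Bigl(\frac{(1+\eta)\xi n}{2}\Bigr)^d \lambda_{d,d}(\ell_M(n)),
\]
and choosing $M$ so that $\ell_M(n)^d = o(n^d \lambda_{d,d}(\ell_M(n)))$ makes the edge-count prefactor negligible. Combined with the shape theorem (which bounds the bulk contribution of the geodesic between the two boundary spheres by $\mu n + o(n)$ with high probability), $A_0\cap A_{\lf nx\rf}$ implies $\rmT(0,nx)\geq (\mu+\xi)n$ for $n$ large; sending $\eta\to 0$ gives the lower bound $-\alpha 2^{1-d}\xi^d$.

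\textbf{Upper bound.} On the event $\rmT(0,nx)\geq (\mu+\xi)n$ we localize the excess $\xi n$ of passage time in the two endpoint boxes of radius $\ell_M(n)$. Concentration of $\rmT$ between the two box boundaries, inherited from the sub-exponential tails in \eqref{cond:distr}, ensures that this bulk part is at most $\mu n + o(n)$ except on an event whose probability is much smaller than the target rate, so the delay splits as $s_0 + s_{\lf nx\rf}\geq (\xi - o(1))n$ across the two boxes. Given such a split, the probability that every path from $0$ to $\partial D_{\ell_M(n)}(0)$ has passage time at least $s_0$ is controlled by a discrete variational inequality: testing the $\ell^d$ energy against the piecewise-linear bottleneck distance to the boundary yields
\[
\P\bigl(\inf_{\gamma:0\to\partial D_{\ell_M(n)}(0)}\rmT(\gamma)\geq s_0\bigr)\leq \exp\bigl(-\alpha s_0^d \lambda_{d,d}(\ell_M(n))(1-o(1))\bigr),
\]
and symmetrically at the other endpoint. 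Optimizing the split $s_0 + s_{\lf nx\rf}=\xi n$ produces $s_0=s_{\lf nx\rf}=\xi n/2$ and the factor $2^{1-d}$; the sub-exponential combinatorial prefactors (entry/exit points of the geodesic and auxiliary finer scales inside $\ell_M(n)$) are absorbed provided $M$ is sufficiently large, which is the origin of the ``$\exists\,M>0$'' quantifier.

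\textbf{Main obstacle.} The key difficulty, absent when $r<d$, is the critical-scale analysis: since $\lambda_{d,d}(M)$ depends only logarithmically on $M$, polynomial mis-scalings of the localization region leave the rate function essentially unchanged, so the upper bound cannot identify the localization scale by a simple one-step optimization. Ruling out intermediate or macroscopic localizations, such as a delay spread over a region of radius $n^{1-\eta}$ for some $\eta\in(0,1)$, or over the whole of $D_n(0)$, demands a finer multi-scale chaining argument than in the $r<d$ case, in which the exact $(\log n)^{-(d-1)}$ decay of $\lambda_{d,d}(\ell_M(n))$ must be played against the lower tail in \eqref{cond:distr}. Once this trade-off is pinned down, the variational identity at the heart of the $r<d$ proof transfers verbatim and produces the matching constant $\alpha\,2^{1-d}\xi^d$.
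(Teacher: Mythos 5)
Your skeleton matches the paper's strategy: localize the excess delay $\xi n$ in two boxes of radius $\ell_M(n)$ around the endpoints, estimate the cost of a box-localized delay via the (path-reformulated) variational quantity $\lambda_{d,d}(\ell_M(n))$, and optimize the split $s_0+s_1=\xi n$ via $(s\xi)^d+((1-s)\xi)^d\geq 2(\xi/2)^d$ to produce the factor $2^{1-d}$. The lower bound via a near-minimizer is exactly what the paper does (Section~3.2), using the minimizer $(t_e^\star)$ of $\lambda^{\rm P}_{d,r}$; your $f^\star$-based event is the same after passing through Theorem~\ref{thm: path formalization}.

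Where the proposal has a genuine gap is the upper bound's bulk-control step, which you compress into ``concentration of ${\rm T}$ between the two box boundaries\,\dots\,ensures that this bulk part is at most $\mu n + o(n)$ except on an event whose probability is much smaller than the target rate.'' This is not a concentration statement; the target rate $\exp(-c\,n^d/(\log n)^{d-1})$ is far below anything a generic concentration inequality delivers, and a single bulk edge of size $\xi n$ has probability $\exp(-\alpha\xi^d n^d)$, which is wildly \emph{smaller}, so ruling out bulk scenarios is precisely the non-trivial content. In the paper this step rests on a separate, inductive-on-dimension large-deviation estimate (Theorem~\ref{prop1}) for passage times constrained to $[0,n]^d$, with rate ${\rm g}_{d,d}(n,N)=N^d/(1+\log n)^{d-1}$, which is then funneled into two events: the ``tunnel'' event $B_2$ (many disjoint slabs, controlled by Lemma~\ref{lem:Zhang}) whose failure probability $\exp(-c\,n(\ell_M(n)/K)^{d-1})\approx\exp(-cM^{d-1}n^d/(\log n)^{d-1})$ can be made negligible by taking $M$ large, and the ``boundary-diameter'' events $A_2^{[i]}$ (controlled via Lemma~\ref{rem:face}, i.e.\ Theorem~\ref{prop1} in dimension $d-1$). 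Your attribution of the ``$\exists M$'' quantifier to absorbing sub-exponential combinatorial prefactors is off: the edge-count prefactor $\exp(c_{\e,d}\,\ell_M(n)^d)=\exp(O(M^d n^d/(\log n)^d))$ is $o(n^d/(\log n)^{d-1})$ for \emph{every} fixed $M$; what actually forces the ``$\exists M$'' is making $\P({}^cB_2)$ and $\P({}^cA_2^{[i]})$ beat the lower bound \eqref{lower bound for d=r}. Finally, your ``main obstacle'' paragraph predicts a new multi-scale chaining argument specific to $r=d$; in fact the paper reuses verbatim the $r<d$ decomposition (slabs plus annuli of radius $i\in\Iintv{\ell_M(n)+1,2\ell_M(n)}$), only changing the scale $\ell_M(n)\sim Mn/\log n$ and bookkeeping ${\rm g}_{d,r}$, so the difficulty you flag is real but is resolved inside Theorem~\ref{prop1}, not by a separate chaining step. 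As written, your proposal leaves the key quantitative bulk-exclusion step unproved (you acknowledge as much by deferring to ``once this trade-off is pinned down''), and that is precisely the step which carries the weight of the theorem.
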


%\begin{remark}\label{rem:lowerBound}
%We have:
%\begin{equation}
%0<\varliminf_{n\to\infty}\, (\log n)^{d-1}\lambda_{d,d}(n)\leq \varlimsup_{n\to\infty}\, (\log n)^{d-1}\lambda_{d,d}(n)\leq \lim_{M\to\infty} \frac{|\partial D_M(0)|}{M^{d-1}}.
%\end{equation}
%In fact, the upper bound is obtained by the expression on the right of \eqref{path-potential duality} with $f(x)=\log{|x|_1}/\log{n}$. On the other hand, the lower bound is directly obtained by Theorem~\ref{th:requalsd} and Theorem~\ref{prop1}. Moreover,
In the critical case $r=d$, one can obtain the following asymptotic behavior of the $p$-capacity:
\begin{thm}[\cite{CN21}] \label{th:upcoming}
For all $d\geq 2$,
$
\lim_{n\to\infty}\, (\log n)^{d-1}\lambda_{d,d}(n)=\kappa_{d,d}.
$
\end{thm}
\begin{remark} \label{rem:wellknown}
The result is well-known when $d=2$, see e.g.\ \cite[Proposition 6.3.2]{L10} and \cite[pp. 25 \& 36]{LP16}.
%For $d\geq 3$, we show in the present paper (Theorem \ref{asymptotic for lambda general}) that
%\begin{equation}
%{\lambda_{d,d}(\ell_M(n))\asymp (\log n)^{1-d},}
%\end{equation}
%where $a_n \asymp b_n$ if $(a_n b_n^{-1})_n$ is bounded from above and from below.
\end{remark}

%\end{remark}
Combined with Theorem \ref{th:requalsd}, % using ${\log{(\ell_{d,d,M}(n))}}\sim{\log{n}}$ as $n\to\infty$,
we obtain:

\begin{cor} \label{cor:main}
Suppose Assumption \ref{assumA} with $r=d\geq 2$. For all $\xi>0$ and $x\in\R^d\backslash\{0\}$,
\al{
\lim_{n\to\infty} \frac{(\log{n})^{d-1}}{n^d} \log \P\left({\rm T}(0,nx)\geq (\mu+\xi) n \right)
= -\alpha 2^{1-d} \, \xi^d\,\kappa_{d,d}.
}
\end{cor}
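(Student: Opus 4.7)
The plan is simply to combine Theorem \ref{th:requalsd} with Theorem \ref{th:upcoming}, the only substantive observation being that $\log \ell_M(n) \sim \log n$.

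More precisely, by the definition \eqref{eq:defellIntro}, for $r=d$ we have $\ell_M(n) = \lceil Mn/(1+\log n)\rceil$, so
\[
\log \ell_M(n) = \log n - \log\log n + \log M + o(1) = (\log n)(1+o(1)).
\]
Since $\ell_M(n) \to \infty$, Theorem \ref{th:upcoming} yields
\[
\lambda_{d,d}(\ell_M(n)) \sim \frac{\kappa_{d,d}}{(\log \ell_M(n))^{d-1}} \sim \frac{\kappa_{d,d}}{(\log n)^{d-1}}.
\]
In other words, $(\log n)^{d-1}\lambda_{d,d}(\ell_M(n)) \to \kappa_{d,d}$.

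Multiplying the limit in Theorem \ref{th:requalsd} by this factor gives
\[
\lim_{n\to\infty}\frac{(\log n)^{d-1}}{n^d}\log \P\left({\rm T}(0,nx)\geq (\mu+\xi)n\right) = \kappa_{d,d}\cdot(-\alpha 2^{1-d}\xi^d),
\]
which is the claimed corollary. The main (and only) point of care is to verify that $\log \ell_M(n) \sim \log n$, ensuring that the asymptotics of $\lambda_{d,d}$ at the argument $\ell_M(n)$ match those at $n$; everything else is a direct substitution. There is no obstacle beyond this elementary logarithm calculation, since the value of $M>0$ provided by Theorem \ref{th:requalsd} is fixed and does not affect the $(\log n)^{d-1}$ leading order.
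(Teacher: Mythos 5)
Your proof is correct and matches the paper's (implicit) argument, which simply says the corollary follows by combining Theorems \ref{th:requalsd} and \ref{th:upcoming}; the only ingredient you rightly spell out is that $\log \ell_M(n) \sim \log n$, which makes $(\log n)^{d-1}\lambda_{d,d}(\ell_M(n)) \to \kappa_{d,d}$.
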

%{I removed the proof of the corollary, I think it is straightforward and should not be in the introduction}
%\begin{proof}
% Using \eqref{main thm for d=r} and \eqref{lambda dd asymptotic}, since $\frac{\log{n}}{\log(M\,\ell(n))}\to 1,$ we have:
%\al{
% &\qquad \lim_{n\to\infty} \frac{(\log{n})^{d-1}}{n^d} \log \P\left({\rm T}(0,nx)\geq (\mu+\xi) n \right)\notag\\
% &= \lim_{n\to\infty} \frac{(\log{n})^{d-1}}{(\log{(M\,\ell(n))})^{d-1}}\, \left((\log{M\, \ell(n)})^{d-1} \lambda_{d,d}(M\,\ell(n))\right)\\
% &\qquad\qquad\times\,\frac{1}{n^d \lambda_{d,d}(M\,\ell(n))} \log \P\left({\rm T}(0,nx)\geq (\mu+\xi) n \right)\notag\\
% &= -\alpha 2^{1-d} \, \xi^d\,\kappa_{d,d}.
%}
%\end{proof}

%\begin{proof}[Proof \textcolor{blue}{to be cleaned}]
%Define $(t_e^\star)_{e\in E_{\ell(n)}}$ as follows: $(t_e)^\star = (\log \ell(n))^{-1} R^{-1}$ when $e=(x,y)\in E_{\ell(n)}$ satisfies $\min(|x|_1,|y|_1) = R$. Such a configuration satisfies the path property in \eqref{def:lambda}. (In fact we think of $(t_e^\star)$ as a minimizer of $\lambda_{d,r}(\ell(n))$). Since $|\{e=(x,y), \inf(|x|_1,|y|_1)=R\}|= \kappa_d R^{d-1}$ with an explicit $\kappa_d$, we have (recall that $r=d$)
%\[
%\sum_{e\in E_{\ell(n)}} (t_e^\star)^r \leq (1+\varepsilon)\kappa_d (\log \ell(n))^{-d} \sum_{R=0}^{\ell(n)} R^{d-1} R^{-d} \leq \kappa_d (1+\varepsilon)^2(\log n)^{1-d},
%\]
%for $n$ large enough. We finally obtain that...
%\end{proof}

\begin{remark}\label{remark:2}
As in Remark~\ref{remark:1}, for $r\in [1,d)$, we can weaken Assumption \ref{assumA} as follows. Suppose that there exist slowly varying functions $\beta_1(t),\beta_2(t),\alpha_1(t),\alpha_2(t)$ such that $\displaystyle\lim_{t\to\infty}\frac{\alpha_1(t)}{\alpha_2(t)}=1$ and \eqref{cond-distr2} holds. Then, \eqref{maineq2} is replaced by the following:
\ben{\label{thm:main4}
\lim_{n\to\infty}\frac{1}{\alpha_1(n)n^r}\log{\P({\rm T}(0,nx)>n(\mu(x)+\xi))}=-2^{1-r} \lambda_{d,r}\, \xi^r.
}
See Section~\ref{slow-vary2} for the proof.

When $r=d$, we can weaken Assumption \ref{assumA} as
$$\lim_{t\to\infty} \frac{1}{t^r}\log{\P(\tau_e>t)}=\alpha.$$
The proof is essentially the same as before and we leave it for the readers.  We remark that we never expect \eqref{thm:main4} with general slowly varying functions when $r=d$, since edge weights around the endpoints are not of order $n$ (see Theorem~\ref{th:rLessThandBig1Loc}) and thus $\alpha_1(n)$ may not appear in the rate function.
\end{remark}

More generally, we can obtain large deviation estimates on the whole scale $N\geq n$ for first passage times on the %upper-half plane,
positive quadrant of $\mathbb Z^d$ which is used to prove the previous theorems: %{do we know how to do it in another direction than $\mathbf e_1$?}
\begin{thm}\label{prop1}
Let $d\geq{}1$ and $r>1$. Suppose that there exist constants $\alpha,\beta>0$ such that for $t\geq 0$,
\begin{equation}\label{UB condition}
\begin{split}
\P(\tau_e\geq{}t)\le \beta e^{-\alpha t^{r}}.
\end{split}
\end{equation}
Then, for all $\xi>0$, there exists $c=c(\xi,r,d)>0$ such that for all $n\in \mathbb N$ and all $N\in[n,\infty)$,
\begin{equation} \label{eq:thInduction}
\P({\rm T}_{[0,n]^d}\left(0,{n}\mathbf{e}_1)> (\mu(\mathbf{e}_1)+\xi) N\right) \le \exp{\left(-c\,{\rm g}_{d,r}(n,N)\right)},
\end{equation}
where  ${\rm T}_{[0,n]^d}$ is the restricted first passage time on $[0,n]^d$ defined in Section~\ref{notation and terminology}, and 
\begin{equation*}
\begin{split}
{\rm g}_{d,r}(n,N)=\begin{cases}
N^r&\text{ if $1<r<d$,}\\
N^d/(1+\log{n})^{d-1}&\text{ if $r=d$,}\\
N^r/n^{r-d}&\text{ if $r>d$.}\\
\end{cases}
\end{split}
\end{equation*}
\end{thm}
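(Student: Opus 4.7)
The plan is to reduce the estimate to a ``minimum-cost obstruction'' argument. The event $\{T_{[0,n]^d}(0,n\eb_1)>(\mu+\xi)N\}$ forces the weight field to build an obstruction of total size $\gtrsim \xi N$ along every nearest-neighbour path from $0$ to $n\eb_1$ in the box. The minimum probabilistic cost of such an obstruction is controlled by a discrete $r$-Dirichlet energy---morally the box version of $\lambda_{d,r}$---whose asymptotics ($\asymp 1$ for $r<d$, $\asymp(1+\log n)^{-(d-1)}$ for $r=d$, $\asymp n^{d-r}$ for $r>d$) produce the three cases of $g_{d,r}$.

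Concretely, I first truncate $\tilde\tau_e=\tau_e\wedge K$ with $K=K(\xi,\alpha,r)$ chosen so that $\E\tilde\tau_e\leq\mu(\eb_1)+\xi/4$. Since $r>1$ the $\tilde\tau_e$ have all exponential moments, and a standard concentration argument (Kesten-type, run on box-restricted geodesics) gives $\P(\tilde{\rm T}_{[0,n]^d}(0,n\eb_1)>(\mu+\xi/2)N)\leq e^{-cN}$, which is absorbable into $\exp(-c\,g_{d,r}(n,N))$. It then remains to control the excess field $h_e:=(\tau_e-K)_+$: I must bound the probability that $\sum_{e\in\gamma}h_e\geq \xi N/2$ holds simultaneously for every nearest-neighbour path $\gamma:0\to n\eb_1$ in $[0,n]^d$. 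For any fixed profile $(h_e^0)$, Assumption~\ref{assumA} and independence give $\P(h_e\geq h_e^0\ \forall e)\leq \exp(-\alpha\sum_e(h_e^0)^r)$. Conversely, any feasible profile must satisfy $\sum_e(h_e^0)^r\gtrsim \xi^r N^r\,\lambda_{d,r}(n)$ by a duality between $r$-Dirichlet energy and minimum path weights: the function $f(x):=\tfrac{2}{\xi N}\min_{\gamma:0\to x}\sum_{e\in\gamma}h_e^0$ is a competitor in the variational problem defining $\lambda_{d,r}$ on a box of suitable radius (after splitting into obstructions near $0$ versus near $n\eb_1$), so $\mathcal E_r(f)\geq \lambda_{d,r}(\lfloor n/2\rfloor)$ and the lower bound on $\sum_e(h_e^0)^r$ follows. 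Combining the two estimates with a chaining discretization over profiles produces the desired bound.

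The main obstacle is the chaining step: a bare union bound over candidate profiles is combinatorially too large and would overwhelm the Chernoff gain, so the discretization has to be performed at $O(\log n)$ dyadic scales with careful control at each one. This is particularly delicate in the critical case $r=d$, where the $(1+\log n)^{-(d-1)}$ factor must emerge from the precise aggregation of scales---the same logarithmic correction that appears in the $d$-capacity asymptotics of Theorem~\ref{th:upcoming}. As auxiliary inputs one needs the sharp asymptotics $\lambda_{d,r}(n)\asymp n^{d-r}$ for $r>d$ and $\lambda_{d,d}(n)\asymp(1+\log n)^{-(d-1)}$, obtained by explicit piecewise-linear (resp.\ logarithmic) test functions together with matching Sobolev/Poincar\'e-type lower bounds.
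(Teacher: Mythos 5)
Your high-level picture (a minimum-cost obstruction argument controlled by a discrete $r$-Dirichlet energy, whose asymptotics produce the three regimes of $g_{d,r}$) matches the paper's intuition, but the proposal has gaps that are not cosmetic.

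\textbf{Entropy of the profile set.} Working with the excess field $h_e=(\tau_e-K)_+$ over \emph{all} edges of $[0,n]^d$ cannot work, whatever form the discretization/chaining takes. Any union bound over profiles (or the Laplace-integral variant actually used in the paper, obtained by stochastic domination by an explicit continuous density) carries a prefactor $\exp\bigl(c\,|E([0,n]^d)|\bigr)=\exp(c\,n^d)$. On the event, $\tilde{\rm T}$ is bounded by $Kn$, so the relevant regime is $n\le N\le Cn$, where the target $\exp(-c'\,g_{d,r}(n,N))$ is at best $\exp(-c'n^d)$ (and only $\exp(-c'n^r)$ for $r<d$, $\exp(-c'n^d/(\log n)^{d-1})$ for $r=d$). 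The prefactor $\exp(cn^d)$ overwhelms it. Discretizing at $O(\log n)$ dyadic scales does not remove this: the entropy scales with the number of edges, not with the number of scales, so the obstacle is structural. The paper's resolution is different in kind: a tunnel event $B_1$ (Lemma~\ref{lem:refinedLemma}) plus shell events $A_1^{[1]},A_1^{[2]}$ (Lemma~\ref{rem:face}) localize the variational step to a ball of radius $\ell_\delta(n)\ll n$, where $|E_{\ell_\delta(n)}|=o(g_{d,r}(n,N))$, and the shell events are controlled by the statement of the theorem itself in dimension $d-1$, which is why the argument is set up as an induction on $d$. Your proposal has neither the tunnels, nor the shells, nor the induction, and these are precisely what make the variational step affordable.

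\textbf{The competitor argument is flawed.} The function $f(x)=\frac{2}{\xi N}\min_{\gamma:0\to x}\sum_{e\in\gamma}h_e^0$ satisfies $f(0)=0$ and $f(n\mathbf e_1)\ge 1$, but it need not satisfy $f\ge 1$ on $\partial D_{\lfloor n/2\rfloor}(0)$, which is required for it to be admissible in $\lambda_{d,r}(\lfloor n/2\rfloor)$: a feasible $h^0$ concentrated near $n\mathbf e_1$ or on a hyperplane bisecting the box produces $f\equiv 0$ near $0$. The ``splitting into obstructions near $0$ versus near $n\mathbf e_1$'' you invoke is exactly the localization that the tunnel/shell machinery establishes; without it, the asserted inequality $\mathcal E_r(f)\ge\lambda_{d,r}(\lfloor n/2\rfloor)$ simply does not follow. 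A direct lower bound for the full-box capacity $\lambda_{d,r}^{\rm P}([0,n]^d;\{n\mathbf e_1\})$ could be obtained along the lines of Section~\ref{section: sketch} (random path through the box, H\"older, Lagrange multiplier), but then you are back to the prefactor problem of the previous paragraph.

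\textbf{Truncation step.} The claimed bound $e^{-cN}$ is not absorbable into $\exp(-c\,g_{d,r}(n,N))$: $N^r\gg N$ for $r>1$. What one needs, and what box-restricted Kesten concentration would give (after a nontrivial adaptation to $T_{[0,n]^d}$, since Kesten's bound as cited is for unrestricted passage times), is $e^{-cn^d}$ together with the observation that $N\le Cn$ on the event. This is fixable but as written the step does not close.

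Finally, note that the paper avoids chaining altogether: it stochastically dominates $\tau_e$ by a random variable with explicit density and bounds $\P({\rm T}^{[1]}_n\ge u)$ by a Laplace integral over the admissible set $H^+_u$, splitting the density to extract both the prefactor $\exp(c\,\ell_\delta(n)^d)$ and the main term $\exp(-\alpha(1-\varepsilon)u^r\lambda^+_{d,r})$ in one step. Your chaining idea, even if it could be made to work, would be a more painful route to the same place --- but as argued above it cannot be made to work over the full box.
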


When $r<d$, {the large deviation event results from the weights around the origin taking high values}, while for $r\geq d$, the contribution is spread out at higher range. What we can show is the following transition in the behavior between $r<d$ and $r\geq d$:
\begin{thm} \label{th:rLessThandBig1Loc}
Suppose Assumption \ref{assumA} and let $x\in\R^d\backslash\{0\}$. If $1<r<d$, then
there exists $R>0$ such that
for all $\xi>0$, there is $\varepsilon_0=\varepsilon_0(\xi)>0$ such that
\begin{equation} \label{eq:loc}
\P\left(\forall e\in E_R, \tau_e\leq \varepsilon_0 n \, \middle| \,\mathrm T(0,nx) \geq (\mu(x)+\xi) n \right) \to 0,
\end{equation}
as $n\to\infty$.

On the other hand, if $r\geq d$, then for all $a>0$, $\xi>0$ and $\varepsilon_0>0$,
\begin{equation} \label{eq:Noloc}
\P\left(\forall e\in E_{n^a}, \tau_e\leq \varepsilon_0 n \, \middle| \,\mathrm T(0,nx) \geq (\mu(x)+\xi) n \right) \to 1.
\end{equation}
\end{thm}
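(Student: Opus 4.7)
I handle the two assertions separately, by rather different arguments. The \emph{no-localization} statement \eqref{eq:Noloc} for $r\geq d$ reduces to a union bound, whereas the \emph{localization} statement \eqref{eq:loc} for $1<r<d$ rests on a perturbed version of the variational constant $\lambda_{d,r}$.

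\textbf{Part 2 (no localization, $r\geq d$).} I bound the probability of having any edge in $E_{n^R}$ of weight exceeding $\varepsilon_0 n$ by a union bound using Assumption~\ref{assumA}:
\[
\P\bigl(\exists e\in E_{n^R}:\tau_e>\varepsilon_0 n\bigr)\leq |E_{n^R}|\,\beta_2\,e^{-\alpha\varepsilon_0^r n^r}\leq Cn^{Rd}e^{-\alpha\varepsilon_0^r n^r}.
\]
I compare this with a lower bound on $\P(\mathrm T(0,nx)\geq(\mu+\xi)n)$: for $r=d$, Corollary~\ref{cor:main} gives $\exp(-Cn^d/(\log n)^{d-1})$, and for $r>d$ the lower bounds of \cite{CGM09} give $\exp(-Cn^d)$. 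In both cases the exponent $\alpha\varepsilon_0^r n^r$ strictly dominates the lower-bound exponent, so the ratio vanishes and the conditional probability in \eqref{eq:Noloc} tends to $1$.

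\textbf{Part 1 (localization, $1<r<d$).} Set $F_R=\{\forall e\in E_R,\,\tau_e\leq\varepsilon_0 n\}$; I need $\P(F_R\cap\{\mathrm T\geq(\mu+\xi)n\})\ll\P(\mathrm T\geq(\mu+\xi)n)$. Introduce the truncated environment $\tilde\tau_e=\tau_e\wedge\varepsilon_0 n$ on $E_R$ and $\tilde\tau_e=\tau_e$ elsewhere, with passage time $\tilde{\mathrm T}$. Since $\mathrm T=\tilde{\mathrm T}$ on $F_R$, it suffices to control $\P(\tilde{\mathrm T}(0,nx)\geq(\mu+\xi)n)$. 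I re-run the upper-bound proof of Theorem~\ref{th:rLessThandBig1} for $\tilde{\mathrm T}$; the argument goes through with the additional constraint $|f(x)-f(y)|\leq\varepsilon_0/\xi$ on edges of $E_R$, and yields an exponential rate governed by a constrained version of $\lambda_{d,r}(M)$. Letting $\varepsilon_0\to 0$ and using that $D_R(0)$ is finite and connected, this constrained infimum converges to
\[
\lambda_{d,r}^R=\lim_{M\to\infty}\inf\Bigl\{\sum_{\langle x,y\rangle\in E_M}|f(x)-f(y)|^r:f|_{D_R(0)}=0,\,f|_{\partial D_M(0)}\geq 1\Bigr\}.
\]
The key remaining ingredient is the strict inequality $\lambda_{d,r}^R>\lambda_{d,r}$ for $R\geq 1$: $\lambda_{d,r}^R$ is the discrete $r$-capacity of $D_R$ in $\Z^d$, $\lambda_{d,r}$ is that of $\{0\}$, so $\lambda_{d,r}^R\geq\lambda_{d,r}$ by monotonicity; strictness follows from strict convexity of $t\mapsto t^r$ for $r>1$ together with the fact that, for $r<d$, the capacity potential of $\{0\}$ is non-constant on $D_R$ (in fact strictly positive on $\partial D_R$), so freezing it to zero on $D_R$ strictly raises the energy. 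Combining with the lower bound $\P(\mathrm T\geq(\mu+\xi)n)\geq\exp(-\alpha 2^{1-r}\lambda_{d,r}\xi^r n^r(1+o(1)))$ of Theorem~\ref{th:rLessThandBig1}, the ratio $\P(F_R\cap\{\mathrm T\geq(\mu+\xi)n\})/\P(\mathrm T\geq(\mu+\xi)n)$ decays like $\exp(-cn^r)$ with $c>0$, proving \eqref{eq:loc}.

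\textbf{Main obstacle.} The delicate step is adapting the upper-bound proof of Theorem~\ref{th:rLessThandBig1} to the truncated environment in a way that cleanly yields the constrained variational formula, and then quantifying the gap $\lambda_{d,r}^R-\lambda_{d,r}>0$. Both steps require careful handling of tightness and compactness of minimizing profiles (to commute the limit $\varepsilon_0\to 0$ with the infimum), together with a perturbation argument around the capacity potential of $\{0\}$ to extract the strict gain $c>0$ for a fixed choice of $R$.
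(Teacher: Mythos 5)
Your Part 2 argument (no localization, $r\geq d$) is essentially identical to the paper's: a union bound on $E_{n^R}$ against the large-deviation lower bound. One small caveat: for $r=d$ you cite Corollary~\ref{cor:main}, which is conditional on the forthcoming Theorem~\ref{th:upcoming} when $d\geq 3$; the paper instead invokes Theorem~\ref{th:requalsd} together with Theorem~\ref{asymptotic for lambda general} (the two-sided boundedness of $(\log n)^{d-1}\lambda_{d,d}(n)$), which gives the needed lower bound unconditionally.

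For Part 1 (localization, $1<r<d$) your route is genuinely different from the paper's, and it has a real gap where you flag the ``main obstacle.'' You aim to show the strict inequality $\lambda_{d,r}^{R}>\lambda_{d,r}$ (the $r$-capacity of $D_R$ exceeds that of $\{0\}$) via strict convexity plus a perturbation of the capacity potential. The difficulty you correctly identify is that strictness of the inequality at each finite $M$ (from uniqueness of the minimizer) does not automatically survive the limit $M\to\infty$; one would need a gap that is uniform in $M$, and similarly one must commute the limit $\varepsilon_0\to 0$ with the infimum. You leave both steps unproved. The paper avoids this entirely. On the event that every $\tau_e$, $e\in E_R$, is at most $\varepsilon_0 n$, it bounds the first exit time ${\rm T}^{[1]}_n$ by $2dR\varepsilon_0 n + {\rm T}_{E_{R+1,n}}(\partial D_R,\partial D_n)$, and then applies the exponential-Markov/stochastic-domination computation of~\eqref{eq:boundLaplace} to the annulus passage time. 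The key quantity is the annulus $r$-capacity $\lambda_{d,r}^{\rm P}(E_{R+1,n};\partial D_R,\partial D_n)$, which by the pathwise formulation of Theorem~\ref{thm: path formalization} and the random-path/Lagrangian argument of Section~\ref{subsec:lowerBoundPath} satisfies $\lambda_{d,r}^{\rm P}\geq C_{R+1,n}^{1-r}$ with $C_{R+1,n}=\sum_{e\in E_{R+1,n}}p_e^{r/(r-1)}\leq c_{d,r} R^{(r-d)/(r-1)}$. Since $1<r<d$ the exponent is negative, so $C_{R+1,n}\to 0$ as $R\to\infty$ \emph{uniformly in $n$}, hence the annulus capacity blows up. One then simply fixes $R$ large enough that the annulus capacity exceeds $\lambda_{d,r}$ for all $n$, and the ratio in~\eqref{eq:needToShow} decays like $e^{-cn^r}$ by Theorem~\ref{th:rLessThandBig1}. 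This circumvents the convexity/uniqueness perturbation argument, is quantitative, and gives the uniform-in-$n$ bound for free. In summary, your Part~1 sketch contains the right intuition — freezing zero potential on $D_R$ strictly raises the energy for $r<d$ — but does not close the argument; the pathwise lower bound on the annulus capacity is the missing tool.
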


We end this section by considering the asymptotics of the rate function near the origin. Under the condition \eqref{UB condition} with $r>d$, it is expected that the following quantity admits a finite and positive limit:
\begin{equation} \label{eq:r>dNolimit}
-\frac{1}{n^d}\P({\rm T}(0,n\mathbf{e}_1)>(\mu(\mathbf{e}_1)+\xi)n).
\end{equation}
See Section~\ref{section related work} for related works. The authors in \cite{CGM09} proved that the limsup and liminf of \eqref{eq:r>dNolimit} are both finite and positive. We only consider the limsup: given $\xi>0$, we define
$$\overline{I}(\xi)=\varlimsup_{n\to\infty}\left(-\frac{1}{n^d}\P({\rm T}(0,n\mathbf{e}_1)>(\mu(\mathbf{e}_1)+\xi)n)\right)\in (0,\infty).$$
\begin{thm}\label{asymptotic of the rate function}
Assume the condition \eqref{UB condition} with some $r>d$ and $\tau_e$ is unbounded, i.e. for any $m>0$, $\P(\tau_e>m)>0$. Then
$$\varlimsup_{\xi\to 0+} \xi^{-d} \overline{I}(\xi)<\infty.$$
\end{thm}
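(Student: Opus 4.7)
The plan is to construct, for each small $\xi>0$ and every large $n$, an event of probability at least $\exp(-C\xi^d n^d)$ that forces $\mathrm T(0,n\mathbf{e}_1)>(\mu+\xi)n$. Set $\bar\mu:=\sup_{|x|_1=1}\mu(x)<\infty$ (continuity of the seminorm $\mu$ on the unit sphere), and use the unboundedness of $\tau_e$ to fix a constant $V>d\bar\mu+2$ with $p_V:=\P(\tau_e\geq V)>0$. Put $R:=\lfloor \xi n/2\rfloor$ and consider the two disjoint $L^\infty$-balls anchored at the endpoints, $B_1:=\{|y|_\infty\leq R\}$ and $B_2:=\{|y-n\mathbf{e}_1|_\infty\leq R\}$.

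I will work with two independent events: $A:=\{\tau_e\geq V$ for all $e\in E(B_1)\cup E(B_2)\}$, which depends only on edges with both endpoints inside the balls, and
\[
B:=\bigcap_{y_1\in\partial B_1,\,y_2\in\partial B_2}\big\{\mathrm T^{(B_1\cup B_2)^c}(y_1,y_2)\geq \mu(y_2-y_1)-\tfrac{\xi}{8}n\big\},
\]
which depends only on edges outside them (including the interface edges from $\partial B_i$ to the exterior). Since $|E(B_1)\cup E(B_2)|=O(\xi^d n^d)$, the i.i.d.\ property gives $\P(A)\geq p_V^{O(\xi^d n^d)}\geq\exp(-C_V\xi^d n^d)$. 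For $B$, combining the monotonicity $\mathrm T^{(B_1\cup B_2)^c}\geq \mathrm T$ with the classical Kesten lower-tail bound $\P(\mathrm T(0,z)<\mu(z)-\varepsilon|z|_1)\leq e^{-c(\varepsilon)|z|_1}$ and a union bound over the $O((\xi n)^{2(d-1)})$ boundary pairs yields $\P(B)\to 1$ as $n\to\infty$. Independence then gives $\P(A\cap B)\geq \tfrac{1}{2}\exp(-C_V\xi^d n^d)$.

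The geometric core of the argument is that $A\cap B$ forces $\mathrm T(0,n\mathbf{e}_1)>(\mu+\xi)n$. For any path $\gamma:0\to n\mathbf{e}_1$, let $y_A$ be the last vertex of $\gamma$ in $\overline B_1$ and $y_B$ the first vertex of $\gamma$ after $y_A$ lying in $\overline B_2$; by construction $y_A\in\partial B_1$, $y_B\in\partial B_2$, and the sub-path from $y_A$ to $y_B$ stays in $(B_1\cup B_2)^c\cup\{y_A,y_B\}$. The path $\gamma$ contains at least $R$ edges of $E(B_1)$ (from the initial segment $0\to\partial B_1$) and at least $R$ edges of $E(B_2)$ (from the final segment $\partial B_2\to n\mathbf{e}_1$), each of weight $\geq V$ under $A$. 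Combining this with the bound on the middle segment supplied by $B$ and the triangle inequality
\[
\mu(y_B-y_A)\geq \mu(n\mathbf{e}_1)-\mu(y_A)-\mu(n\mathbf{e}_1-y_B)\geq n\mu-\bar\mu d\,\xi n,
\]
(using $|y_A|_1,|n\mathbf{e}_1-y_B|_1\leq dR$ and $2R\leq\xi n$), I obtain
\[
\mathrm T(\gamma)\geq V\cdot\xi n+n\mu-\bar\mu d\,\xi n-\tfrac{\xi}{8}n=n\mu+(V-d\bar\mu)\xi n-\tfrac{\xi}{8}n>(\mu+\xi)n,
\]
thanks to $V-d\bar\mu\geq 2$. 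Hence $\P(\mathrm T(0,n\mathbf{e}_1)>(\mu+\xi)n)\geq \tfrac{1}{2}\exp(-C_V\xi^d n^d)$, so $\overline I(\xi)\leq C_V\xi^d$ and $\limsup_{\xi\to 0^+}\xi^{-d}\overline I(\xi)\leq C_V<\infty$.

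The main technical obstacle is the control of $\P(B)$ under the weaker hypotheses of Theorem~\ref{asymptotic of the rate function}, which supply only the upper tail \eqref{UB condition} and unboundedness, not the two-sided Assumption~\ref{assumA}. The required lower-tail estimate $\P(\mathrm T<(\mu-\varepsilon)n)\leq e^{-cn}$, due to Kesten, is nonetheless available under \eqref{UB condition} together with $\P(\tau_e=0)<p_c$ (implicit in $\mu>0$, the only regime in which the theorem has content), and it is transferred to the restricted passage time $\mathrm T^{(B_1\cup B_2)^c}$ via the monotonicity argument above.
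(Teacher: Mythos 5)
Your proof is correct and follows the same strategy as the paper's: surround the endpoint(s) with boxes of radius $\Theta(\xi n)$, force every edge inside them to carry a constant weight (with cost $\exp(-C\xi^d n^d)$ since there are $O((\xi n)^d)$ such edges), and combine this with the fact that passage times outside these small boxes are, with probability bounded below, close to what the time constant predicts.

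The technical execution differs in two respects, both of which make your version slightly heavier than the paper's. First, the paper places a single box $D_{\lceil\delta\xi n\rceil}(0)$ around the origin only, choosing the edge threshold $2\delta^{-1}$ so that the exit cost is $\geq 2\xi n$ — the factor two then absorbs the deficit $(\mu-\xi)n$ of the outer passage time, so there is no need to carefully estimate $\mu(y_A)$, $\mu(n\mathbf{e}_1-y_B)$ or to pick $V$ relative to $d\bar\mu$. Second, and more significantly, the paper's control of the outer event $G$ uses only the inequality $\min_{x\in\partial D}{\rm T}_{D^c}(x,n\mathbf{e}_1)\geq {\rm T}_n-U_n^{[1]}$ (where $U_n^{[1]}$ is the maximum over deterministic short paths inside the box), so that $\P(G)\geq 1/2$ follows from the almost sure convergence ${\rm T}_n/n\to\mu$ and an exponential Markov bound on $U_n^{[1]}$; your argument instead invokes Kesten's lower-tail large deviation estimate $\P({\rm T}(0,z)<\mu(z)-\varepsilon|z|_1)\leq e^{-c(\varepsilon)|z|_1}$ uniformly over the $O((\xi n)^{2(d-1)})$ pairs of boundary points, which you correctly flag as the technical pressure point. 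The paper's subtraction trick avoids that ingredient entirely — worth adopting, since it dispenses with the discussion in your final paragraph and makes the hypotheses visibly sufficient. Beyond these points, the geometric bookkeeping (the first/last $R$ edges inside each box belong to $E_R$, the middle segment from $y_A$ to $y_B$ stays in the complement) is sound.
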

\begin{remark}We are not sure whether the limit above is positive or not.
\end{remark}
%{The theorem implies an upper bound on large deviations on the full plane when $r>d$:
%\begin{cor} Suppose the assumption of Theorem \ref{prop1} and $r>d$. Then, for all $\xi>0$,
%\begin{equation}
%\varlimsup_{n\to\infty} \frac{1}{n^d}\P({\rm T}(0,{n}\mathbf{e}_1)> (\mu(\mathbf{e}_1)+\xi) n) <0.
%\end{equation}
%\end{cor}
%what about a lower bound?}

\subsection{Related works and discussion}\label{section related work}
Large deviations principle is one of the major subjects in probability theory. The study of the large deviations in the {context of} first passage percolation was initiated by Kesten \cite{Kes86}. For the lower tail {large deviations}, by using the usual subadditivity argument, he obtained that for $\xi>0$ small enough, the following limit exists and is negative:
\ben{\label{Kes: lower}
\lim_{n\to\infty}\frac{1}{n}\log{ \P({\rm T}(0,n\mathbf{e}_1)<n(\mu(\mathbf{e}_1)-\xi))}.
}
On the other hand, he showed that under the boundedness of the distribution,
\aln{
-\infty&<\varliminf_{n\to\infty}\frac{1}{n^d}\log{ \P({\rm T}(0,n\mathbf{e}_1)>n(\mu(\mathbf{e}_1)+\xi))}\nonumber\\
&\leq \varlimsup_{n\to\infty}\frac{1}{n^d}\log{ \P({\rm T}(0,n\mathbf{e}_1)>n(\mu(\mathbf{e}_1)+\xi))}<0. \label{Kes: upper}
}
It is worth noting that the rates of upper {large deviations} and lower {large deviations} are different (See \cite{CZ03} for the heuristics of the difference).  It is natural to expect the two limits in \eqref{Kes: upper} coincide, in which case we call the limit the rate function. The authors in \cite{CZ03} confirm the existence of the rate function of upper tail large deviations for the line-to-line first passage time. Although that for the point-to-point case as in \eqref{Kes: upper} had been an open problem for many years, it was solved in \cite{BGS} recently for bounded distributions with continuity densities. The assumption of boundedness is essential since the rate $n^d$ in \eqref{Kes: upper} may change for unbounded distribution, as illustrated in the previous section and first observed in \cite{CGM09}, {see also \cite{A15} for distributions with power law decay}. Let us now give a quick comparison of our results and those of \cite{CGM09}.

In \cite{CGM09}, the authors have proven that under $\log{\P(\tau_e > x)} = -x^{d} f(x)$ where $f$ is a positive increasing function, \eqref{Kes: upper} holds if and only if $$\sum_{n\in \N} f(2^n)^{-1/{(d-1)}}<\infty.$$ In particular, this implies that \eqref{Kes: upper} holds when $\tau_e$ satisfies Assumption \ref{assumA} with $r>d$. Our Theorem \ref{prop1} is thus a slight refinement of their result. When $d=2$ and $\tau_e$ follows the Half-normal distribution, they also proved that
\begin{equation} \label{eq:boundsr=d}
\begin{aligned}
-\infty & < \varlimsup_{n\to\infty} \frac{(\log{n})^{d-1}}{n^d} \log \P\left({\rm T}(0,nx)\geq (\mu+\xi) n \right) \\
& \leq \varlimsup_{n\to\infty} \frac{(\log{n})^{d-1}}{n^d} \log \P\left({\rm T}(0,nx)\geq (\mu+\xi) n \right)
< 0.
\end{aligned}
\end{equation}

In this paper, we extend \eqref{eq:boundsr=d} for any distribution satisfying Assumption \ref{assumA} with $r=d\geq 2$. Moreover, we find matching upper and lower bounds when $d=2$ and exhibit the rate function (Corollary \ref{cor:main}). A forthcoming paper will give matching bounds for $d\geq 3$. Another new contribution of the paper is the case $r<d$ (Theorem~\ref{thm:main1} and Theorem \ref{th:rLessThandBig1}), which has not been touched in \cite{CGM09}. In this case, the scaling is changed by $n^r$, and we obtain upper and lower bounds that match and exhibit the rate function. In Theorem \ref{prop1}, we further obtain an upper bound for different scales of deviations and spatially constrained paths, for any $r> 1$ and $d\geq 1$. Our approach is quite different from \cite{CGM09}. Here, thanks to our estimates on $\lambda_{d,r}$, we can follow a quite short and straightforward induction argument.

The different scalings in Theorem~\ref{prop1} come from different scenarios of the upper {large deviations} event $\{{\rm T}(0,nx)>(\mu(x)+\xi)n\}$. Indeed, for light-tailed distributions, the upper {large deviations} events are affected by overall configurations. In contrast, for heavy-tailed distributions, the upper {large deviations} events highly depend on the configurations around the endpoints. Hence, we need to take a distribution-dependent approach to study upper tail {large deviations} in-depth for general distribution. This paper considers Weibull distributions with a shape parameter less than or equal to the dimension. In these cases, by studying the neighborhoods of the endpoints carefully, we can do a more detailed analysis, which enables us to get the exact value of the rate function, which is characterized by the discrete p-Capacity. To analyze the asymptotics of the discrete p-Capacity, we propose a new method (to our knowledge), that combines a pathwise formulation of the problem (Section~\ref{section: path formalization}) with linear constraints, and a %path-linearization
%linearization
reduction to a unique constraint via the introduction of a random path, allowing us to apply the Lagrangian multiplier method (Section~\ref{section: sketch}). Although the route we take is quite different, the latter argument is inspired by the ideas of \cite{CGM09}.

Incidentally, upper tail large deviations with scaling $n^r$, $r\leq 1$, appear in several models, such as Last-passage percolation and Directed polymers with stretched exponential distributions \cite{BA09}, the chemical distance of percolation clusters \cite{GM07}, and the frog models \cite{AMP02}. For Last-passage percolation models, our methods are applicable and one can get the analogous results. In the frog models, it is proved that the object
$$-\log{\P({\rm T}_{\rm f}(0,nx)>(\mu_{\rm f}(x)+\xi)n)},$$ with the first passage time ${\rm T}_{\rm f}$ for the frog model and a certain time constant $\mu_{\rm f}(x)$, grows like (i) $\sqrt{n}$ for $d=1$; (ii) $n/\log n$ for $d=2$; (iii) $n$ for $d\geq 3$ \cite{CKN}. Moreover, new variational formulas for the rate functions are given there as in our results. In other models, we expect similar results.

Finally, we describe a relation between the upper tail large deviations and the maximal edge-traversal time, $\max_{e\in \gamma} \tau_e$, of optimal paths. Indeed, it was revealed in \cite{N17} that the growth order of the maximal edge-traversal time is more or less determined by the scaling order of the upper tail large deviations. More precisely, under the Assumption \ref{assumA} with some additional assumptions, we have for any optimal path $\gamma$ from $0$ to $n\mathbf{e}_1$,
$$\max_{e\in \gamma} \tau_e \asymp
\begin{cases}
(\log{n})^{\frac{1}{1+r}} & \text{ if }r<d-1,\\
(\log{n})^{\frac{1}{d}} & \text{ if }d-1<r<d,\\
(\log{n})^{\frac{1}{r}} & \text{ if }r>d,
\end{cases}
$$
where the transition at $r=d-1$ above  corresponds to the transition at $r=d$ in upper tail {large deviations}; the transition at $r=d$ above comes from a different reason. %Hence, the large deviations in First-passage percolation are not only of mathematical interest but an important tool to analyze phenomena in the real world.

%\textbf{Step 1:} (The tunnel argument) For all $\varepsilon>0$,
%\begin{equation}\label{eq:1stStep}
%\begin{aligned}
%&\P\left({\rm T}(0,n \mathbf{e}_1) >(\mu + \xi) n\right)\\
%& \approx \P\left({\rm T}_{D^{[1]}_{n,M}}(0,\partial D(2M\ell_{d,r}(n)) + {\rm T}_{D(n\mathbf{e}_1,2M\ell_{d,r}(n))}(n\mathbf{e}_1,\partial D(n\mathbf{e}_1,2M\ell_{d,r}(n))> (\xi-\varepsilon) n\right),
%\end{aligned}
%\end{equation}
%where
%\[{\rm T}_{A}(x,B) = \inf_{\gamma:x\to y\in B, [\gamma] \subset A} \sum_{e\in \gamma} \tau_e,\]
%with $[\gamma]=\{\gamma_1,\dots,\gamma_{|\gamma|}\}$ the image of $\gamma$.
%Approximation \eqref{eq:1stStep} has to be justified. The goal is to let $n\to\infty$ followed by $M\to\infty$. By the independence structure, it is then enough to focus on
%
%\textbf{Step 2:} (Laplace method)
%\begin{equation}
%\P\left({\rm T}(0,\partial D(2M\ell_{d,r}(n)) > \xi n\right) \approx \exp\left(-\alpha \xi^r n^r \alpha_{2M\ell_{d,r}(n)}\right),
%\end{equation}
%where
%\[\alpha_M := \inf\left\{ \sum_{e\in \gamma} \tau_e^r \,\bigg|\, \gamma : 0\to\partial D_M(0)\,,\, \sum_{e\in \gamma} \tau_e \geq 1\right\}.
%\]
%
%We conclude thanks to
%
%\textbf{Step 3:} $\alpha_M\to \alpha$ as $M\to\infty$.

\subsection{Notation and terminology}\label{notation and terminology}
This subsection collects some useful notations, terminologies and remarks.
\begin{itemize}
\item {For any set $A$, we denote by $|A|$ its cardinality.}
\item {For $D\subset \mathbb R^d$ and $\gamma = (x_i)_{i=0}^l$, we write $\gamma \subset D$ when $x_i \in D$ for all $i \in [1,\ell-1]$. If $E\subset  E(\Z^d)$, then we also write $\gamma\subset E$ when $\la x_i,x_{i+1}\ra \in E$ for all $i$.}
 \item {Given two vertices $v,w\in\Z^d$ and a set $D\subset\R^d$, we set the {\em restricted} first passage time as
$${\rm T}_D(v,w)=\inf_{\gamma\subset D}{\rm T}(\gamma),$$
   where the infimum is taken over all paths $\gamma$ from $v$ to $w$ with $\gamma\subset D$. If such a path does not exist, then we set it to be infinity instead.}
 \item Similarly, given a set $E\subset  E(\Z^d)$, we define
   $${\rm T}_{E}(v,w)=\inf_{\gamma\subset E}{\rm T}(\gamma).$$
   
 \item Given two sets $A\subset \mathbb R^d$ (or $A\subset  E(\Z^d)$) and $B\subset \mathbb Z^d$, and a point $v\in\mathbb Z^d$, we write
 \[{\rm T}_A(v,B)=\inf_{w\in B} {\rm T}_A(v,w).\]

  \item Given $(x_i)_{i=1}^k\subset \R$ and random variables $(X_i)_{i=1}^k$ with $k\in\N$,
   \al{\P\left(\sum_{i=1}^k X_i\geq \sum_{i=1}^k x_i\right)&\leq \P\left(\exists i\in\{1,\cdots,k\}\text{ s.t. } X_i\geq  x_i\right)\\
   &\leq \sum_{i=1}^k\P(X_i\geq x_i).
 }
   We use this inequality throughout this paper without any comment.
\end{itemize}
\section{Preliminaries on $\lambda_{d,r}(n)$}
\subsection{Pathwise formulation for $\lambda_{d,r}(n)$}\label{section: path formalization}
 We consider a generalization of $\lambda_{d,r}(M)$ {(which is defined above \eqref{eq:def_lambdaLimit})}: given a connected subset $A$ of $\Z^d$ containing $0$ and a set $B$ with $B\subset A$, we let
 \ben{\label{general lambda}
   \lambda_{d,r}(A;B)=\inf_{f \in \R^A}\left\{\sum_{\langle x, y\rangle \in E_A} |f(x)-f(y)|^{r}\ \middle|~\forall x\in B,\,f(x)\geq 1,\,f(0)=0\right\},
   }
where $E_A=\{e = \langle x ,  y \rangle | x,y\in A\}$. {For any nearest neighbor finite path $\gamma$ in $\mathbb Z^d$ and any set $A\subset \mathbb Z^d$, we use the notation $\gamma:0\to A$ when the path $\gamma$ starts at $0$ and ends in $A$,} and we define
\begin{equation} \label{def:lambda}
\lambda_{d,r}^{\rm P}(A;B)= \inf_{(t_e)\in (\mathbb R_+)^{E_A}} \left\{\sum_{e\in E_A} t_e^{r}\ \middle|~\forall \gamma:0\to B\text{ with $\gamma\subset A$},\,\sum_{e\in\gamma}t_e\geq 1\right\}.
\end{equation}
\begin{thm}\label{thm: path formalization}
  For any $B\subset A\subset \Z^d$ and $r>0$,
  \ben{\label{path-potential duality}
    \lambda_{d,r}(A;B)=\lambda_{d,r}^{\rm P}(A;B).
      }
In particular,
\begin{equation} \label{def:lambdaIntro}
\lambda_{d,r}(n)= \lambda^P_{d,r}(D_n(0);\partial D_n(0)).
\end{equation}

\end{thm}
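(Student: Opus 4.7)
The plan is to establish the identity by proving both inequalities $\lambda_{d,r}^{\rm P}(A;B) \le \lambda_{d,r}(A;B)$ and $\lambda_{d,r}(A;B) \le \lambda_{d,r}^{\rm P}(A;B)$ separately. Each direction amounts to a triangle-inequality argument that converts between potentials on vertices and non-negative weights on edges.

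For the first inequality, I start from an admissible potential $f$ for \eqref{general lambda} and define edge weights $t_e := |f(x) - f(y)|$ for every edge $e = \langle x, y \rangle \in E_A$. For any path $\gamma: 0 \to B$ with $\gamma \subset A$ ending at some $w \in B$, telescoping and the triangle inequality give
\[
\sum_{e \in \gamma} t_e \;=\; \sum_{i} |f(x_{i+1})-f(x_i)| \;\ge\; |f(w) - f(0)| \;=\; |f(w)| \;\ge\; 1,
\]
so $(t_e)$ is admissible for \eqref{def:lambda} with value $\sum_e t_e^r = \sum_{\langle x,y\rangle \in E_A} |f(x)-f(y)|^r$. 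Taking an infimum over $f$ yields $\lambda_{d,r}^{\rm P}(A;B) \le \lambda_{d,r}(A;B)$.

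For the reverse inequality, I pass from edge weights to a potential by means of a shortest-path distance. Given admissible edge weights $(t_e)$ for \eqref{def:lambda}, I set
\[
f(x) \;:=\; \inf_{\gamma: 0 \to x,\, \gamma \subset A} \sum_{e \in \gamma} t_e, \qquad x \in A,
\]
which is finite since $A$ is connected. Then $f(0)=0$ by taking the trivial path; for $x \in B$ any path from $0$ to $x$ in $A$ is itself a path from $0$ to $B$ inside $A$, so the constraint in \eqref{def:lambda} forces $f(x) \ge 1$; and for each edge $\langle x, y \rangle \in E_A$, concatenating a near-optimal path to $x$ with the edge $\langle x,y\rangle$ yields $f(y) \le f(x) + t_{\langle x,y\rangle}$, and the symmetric inequality holds by reversing roles, so $|f(x)-f(y)| \le t_{\langle x,y\rangle}$. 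Thus $f$ is admissible in \eqref{general lambda} and $\sum_{\langle x,y\rangle\in E_A}|f(x)-f(y)|^r \le \sum_e t_e^r$; taking an infimum over $(t_e)$ closes the loop. The specialization \eqref{def:lambdaIntro} is then immediate upon identifying $E_M$ with $E_{D_M(0)}$ and noting that the original definition of $\lambda_{d,r}(M)$ is exactly $\lambda_{d,r}(D_M(0);\partial D_M(0))$.

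I do not anticipate any real obstacle: both directions rest on the triangle inequality for the pseudo-metric induced by the edge weights $(t_e)$. The only mild care needed is ensuring that $f$ is finite, which is guaranteed by the connectedness of $A$, and handling the infimum via an $\varepsilon$-near-optimal path if one wants to avoid attainment issues in the infinite-$A$ case (for the finite-$A$ case relevant to \eqref{def:lambdaIntro}, the infimum is plainly attained).
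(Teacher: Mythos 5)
Your proposal is correct and follows essentially the same route as the paper: one direction via edge weights $t_e := |f(x)-f(y)|$ and a telescoping/triangle-inequality argument, the other via the shortest-path potential $f(x) = \inf_{\gamma:0\to x}\sum_{e\in\gamma}t_e$. The only difference is the order in which the two inequalities are presented, which is immaterial.
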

\begin{proof}
  We first prove $\lambda_{d,r}(A;B)\leq \lambda^{\rm P}_{d,r}(A;B)$. Given $(t_e)\in (\mathbb R_+)^{E_A}$, we define the function $f:A\to \R$ as 
    $$f(x)=\inf_{\gamma} \sum_{e\in\gamma} t_e,\,x\in A,$$
  where the infimum is taken over all paths $\gamma$ from $0$ to  $x$ inside $A$. For $e=\langle x,y\rangle\in E_A$, by the triangular inequality, $f(y)\leq f(x)+ t_e$ and $f(x)\leq f(y)+ t_e$, which implies $ |f(y)-f(x)|\leq t_e$ and $$\sum_{\langle x, y\rangle \in E_A}|f(x)-f(y)|^{r}\leq \sum_{e\in E_A} t_e^{r}.$$ Moreover, if for any path $\gamma$ from $0$  to  $B$  with $\gamma\subset A$, we have $\sum_{e\in\gamma}t_e\geq 1$, then  $f(0)=0$ and for any $x\in B,\,f(x)\geq 1$. Therefore, we get the desired inequality.

    Next, we prove $\lambda_{d,r}(A;B)\geq \lambda^{\rm P}_{d,r}(A;B)$. To this end, we take a function $f: A\to \R$ such that $\forall x\in B,\,f(x)\geq 1,\,f(0)=0$. Let us define $t_e=|f(x)-f(y)|$ for $e=\langle x,y\rangle$. Then for any path $\gamma=(x_0,\cdots,x_l)$ from $0$ to $B$ with $\gamma\subset A$,
      $$\sum_{e\in\gamma} t_e = \sum_{i=1}^l |f(x_i)-f(x_{i-1})| \geq f(x_{l})-f(0)=f(x_l)\geq 1,$$
  which completes the proof.
\end{proof}
\subsection{Asymptotics of $\lambda_{d,r}(n)$}
Let us define
$$
\kappa_{d,r}(n)=
\begin{cases}
  \lambda_{d,r}(n)&\text{ if }r<d,\\
 (\log{n})^{d-1} \lambda_{d,r}(n)&\text{ if }r=d,\\
  n^{r-d} \lambda_{d,r}(n)&\text{ if }r>d.
\end{cases}$$
\begin{thm}\label{asymptotic for lambda general}
  For $d\geq 1$ and $r>0$, $\kappa_{d,r}(n)$ is bounded away from $0$ and $\infty$.
\end{thm}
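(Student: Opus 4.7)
The plan is to establish matching upper and lower bounds on $\lambda_{d,r}(n)$ of the orders
\[
\lambda_{d,r}(n)\ \asymp\ \begin{cases}1 & r<d,\\ (\log n)^{-(d-1)} & r=d,\\ n^{-(r-d)} & r>d,\end{cases}
\]
so that $\kappa_{d,r}(n)$ is pinched between two positive constants. Upper bounds will come from inserting explicit test functions into the variational formula \eqref{general lambda}, while lower bounds will use the dual pathwise formulation from Theorem~\ref{thm: path formalization} together with H\"older's inequality against a carefully chosen unit flow.

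For the upper bounds, in the regime $r<d$ I take $f(x)=\mathbf 1_{x\neq 0}$, so that only the $2d$ edges incident to the origin contribute and $\lambda_{d,r}(n)\le 2d$. For $r=d$ I take $f(x)=\min\bigl(1,\log(1+|x|_\infty)/\log n\bigr)$, whose discrete gradient on an edge at $\ell^\infty$-distance $k$ from the origin is of order $(k\log n)^{-1}$, so that summing $|\nabla f|^d$ shell by shell gives $\sum_{k=1}^{n}k^{d-1}(k\log n)^{-d}\asymp(\log n)^{-(d-1)}$. For $r>d$ I take $f(x)=\min(1,|x|_\infty/n)$, every gradient is bounded by $1/n$ and the edge count is $O(n^d)$, yielding $\lambda_{d,r}(n)\lesssim n^{d-r}$.

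For the lower bound I split on whether $r\leq 1$ or $r>1$. When $r\le 1$, the elementary inequalities $t^r\ge\min(t,1)$ and $\sum_i\min(t_i,1)\ge\min(\sum_i t_i,1)$ for $t,t_i\ge 0$ show that along any single admissible path $\gamma:0\to\partial D_n(0)$ and any admissible $(t_e)$ in \eqref{def:lambda},
\[
\sum_{e\in\gamma}t_e^r\ \ge\ \min\Bigl(\sum_{e\in\gamma}t_e,\,1\Bigr)\ =\ 1,
\]
which already gives $\lambda_{d,r}(n)\ge 1$. When $r>1$, with conjugate $r'=r/(r-1)$, I will construct a unit flow $\phi$ from $0$ to $\partial D_n(0)$ that is ``radial'' in the sense that the unit mass of flow is shared roughly uniformly across the $\asymp k^{d-1}$ edges connecting $\partial D_{k-1}(0)$ to $\partial D_k(0)$, so that $\phi_e\asymp k^{-(d-1)}$ on these edges. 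Summation by parts against any admissible $f$, applied with $t_e=|\nabla f(e)|$, gives $\sum_e\phi_e t_e\ge 1$, and H\"older then furnishes
\[
\lambda_{d,r}(n)\ \ge\ \Bigl(\sum_e\phi_e^{r'}\Bigr)^{-(r-1)}.
\]
It then remains to estimate $\sum_e\phi_e^{r'}\asymp\sum_{k=1}^n k^{-(d-1)/(r-1)}$, which is $O(1)$ for $r<d$, $\Theta(\log n)$ for $r=d$ and $\Theta(n^{(r-d)/(r-1)})$ for $r>d$; raising to the power $-(r-1)$ produces the three matching lower bounds.

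The main technical obstacle will be the rigorous construction of the radial flow, because the $\ell^\infty$-spheres $\partial D_k(0)$ have inhomogeneous geometry (corner vertices carry many more outgoing edges than face-interior vertices), so $\phi$ cannot be defined by a naive vertex-by-vertex recipe. A convenient workaround is to describe $\phi$ as the expected edge-occupation measure of a Markov chain started at $0$ that increases $|x|_\infty$ by one at every step until it hits $\partial D_n(0)$; any transition rule preserving enough symmetry on each shell then gives the shell-wise bound $\phi_e\le c\,k^{-(d-1)}$ by a direct counting argument, and every remaining estimate reduces to the elementary sum over $k$ above.
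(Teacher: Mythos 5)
Your proposal matches the paper's proof essentially step for step: the same three test functions for the upper bound, the same pathwise dual formulation with a shell-radial random path/flow whose edge-occupation probabilities are $\lesssim |e|^{-(d-1)}$, and the H\"older bound $\lambda_{d,r}(n)\ge\bigl(\sum_e\phi_e^{r'}\bigr)^{-(r-1)}$, which is algebraically identical to the paper's Lagrangian saddle-point bound $\lambda_{d,r}(n)\ge C_n^{1-r}$ with $C_n=\sum_e p_e^{r/(r-1)}$ (the paper realizes the flow as the occupation measure of the lattice path tracking the chord from $0$ to a uniformly random point of $\partial D_n$, which plays exactly the role of your shell-uniform Markov chain). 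The only stylistic departure is your self-contained elementary inequality giving $\lambda_{d,r}(n)\ge 1$ directly when $r\le 1$, where the paper instead invokes monotonicity of $r\mapsto\lambda_{d,r}(n)$ to reduce to the $r>1$ estimate; both dispose of that case equally quickly.
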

  We give a proof in Section~\ref{section: sketch}. %See also \cite{CN21} for a more refined estimate.

\section{The lower bounds}
For the simplicity of notation, we only consider the case $x=\mathbf{e}_1$, though essentially the same proof works for general $x$. For the proof of general $x$, see Remark~\ref{rem: general x}. We write 
\[{\rm T}_n={\rm T}(0,n\mathbf{e}_1) \quad \text{and} \quad \mu=\mu(\mathbf{e}_1). 
\]

%\textcolor{blue}{Although the proofs are very similar, we need to distinguish the cases $r>1$ and $r\leq 1$ because we are not looking for the same asymptotics (one is $2^{1-r}\xi^r$ and the other is $\xi^r$. In particular, we consider only one ball in the first case, and two balls in the second).}
Recall that $D_M(x)=\{y\in \mathbb Z^d: |y-x|_\infty \leq M\}$. 
Throughout the paper, we denote by $C_d>0$ some constant such that for all $M>0$, $|D_M(0)| \leq C_d M^d$ and $|\partial D_M(x)| \leq C_d M^{d-1}$. 

\subsection{Proof of the lower bound in Theorem \ref{thm:main1} ($r\leq 1$)}

We fix $\xi>0$ and take $\e\in(0,\xi)$ arbitrary. We denote
$$\tilde{E}_{1}=\{e\in E(\Z^d):~0\in e\}\text{ and } \tilde{D}_1(0)=\{x\in\Z^d:~|x|_1=1\}.$$
%Roughly speaking, the lower bound comes from %the event $\{\forall e\in \tilde{E}_{1},~\tau_e\geq \xi n\}$ %whose probability is approximately $%\exp{(-2d\alpha \xi^r n^r)}$. Indeed, on this %event, $\{{\rm T}_n>(\mu+\xi)n\}$ is likely to %occur. We make the heuristics rigorous. 

If we suppose
$$\forall e\in  \tilde{E}_{1},\,\tau_e>(\xi+\e)n\text{ and }\min_{x\in \tilde{D}_1(0)}{\rm T}_{ E(\Z^d)\backslash  \tilde{E}_{1}}(x,n\mathbf{e}_1)>(\mu-\e)n,$$
 since
$${\rm T}_n=\min_{x\in \tilde{D}_{1}}(\tau_{\langle 0,x\rangle}+{\rm T}(x,n\mathbf{e}_1))\geq \min_{e\in \tilde{E}_{1}}\tau_e+\min_{x\in \tilde{D}_1(0)}{\rm T}_{ E(\Z^d)\backslash  \tilde{E}_{1}}(x,n\mathbf{e}_1), $$
then  we get ${\rm T}_n>(\mu+\xi)n$. Thus
\aln{
 & \qquad \P({\rm T}_n>(\mu+\xi)n)\nonumber\\
  &\geq \P(\forall e\in \tilde{E}_{1},~\tau_e>(\xi+\e)n,\,\min_{x\in \tilde{D}_1(0)}{\rm T}_{ E(\Z^d)\backslash  \tilde{E}_{1}}(x,n\mathbf{e}_1)>(\mu-\e)n)\nonumber\\
  &= \P\left(\forall e\in  \tilde{E}_{1},~\tau_e>(\xi+\e)n\right)\,\P\left(\min_{x\in \tilde{D}_1(0)}{\rm T}_{ E(\Z^d)\backslash  \tilde{E}_{1}}(x,n\mathbf{e}_1)>(\mu-\e)n\right).\label{lower-est}
}
The first term can be bounded from below by  $\beta_1^{2d}\exp{(-2d\alpha (\xi+\e)^r n^r)}$, where $\beta_1$ is in \eqref{cond:distr}. On the other hand, for the second term, since $${\rm T}_n\leq\max_{e\in \tilde{E}_{1}}\tau_e+\min_{x\in \tilde{D}_1(0)}{\rm T}_{ E(\Z^d)\backslash  \tilde{E}_{1}}(x,n\mathbf{e}_1),$$
we obtain
\al{
  & \P\left(\min_{x\in \tilde{D}_1(0)}{\rm T}_{ E(\Z^d)\backslash  \tilde{E}_{1}}(x,n\mathbf{e}_1)>(\mu-\e)n\right)\\
  &\geq \P\left(\forall e\in \tilde{E}_{1},~\tau_e<\frac{\e n}{2},~{\rm T}_n>\left(\mu-\frac{\e}{2}\right)n\right)\\
  &\geq \P\left({\rm T}_n>\left(\mu-\frac{\e}{2}\right)n\right)-\P\left(\exists e\in \tilde{E}_{1} ,~\tau_e\geq \frac{\e n}{2}\right),
}
which converges to $1$ as $n\to\infty$. Therefore, for sufficiently large $n$, we have
\ben{\label{lower-main}
  \P({\rm T}_n>(\mu+\xi)n)\geq \frac{\beta_1^{2d}}{2}\exp{(-2d\alpha (\xi+\e)^r n^r)}.
  }
Since $\e$ is arbitrary, letting $\e\to 0$ after $n\to\infty$, we get
$$\varliminf_{n\to\infty}\frac{1}{n^r}\log{\P({\rm T}_n>(\mu+\xi)n)}\geq -2d\alpha \xi^r.$$

\subsection{Proof of the lower bounds in Theorem \ref{th:rLessThandBig1} and Theorem \ref{th:requalsd}}\label{section: lower bound r>1}
Let us first introduce some notations that we will use repeatedly in the paper. We let
{
\begin{equation} \label{eq:defellIntro}
\ell_M(n)= \ell_{d,r,M}(n)=
\begin{cases}
\left\lceil M n^{\frac{r-1}{d-1}}\right\rceil&\text{ if }1<r<d,\\
~\vspace{-4mm}&\\
\left\lceil \frac{Mn}{1+\log{n}}\right\rceil&\text{ if }r=d,\\
~\vspace{-4mm}&\\
\left\lceil Mn\right\rceil&\text{ if }r>d,\\
\end{cases}
\end{equation}
where $\lceil x \rceil = \inf\{n\geq x,n\in \mathbb N\}$ and $M>0$ is arbitrary}.
%Let us take $M>0$ arbitrary. Recall the definition of $\ell_M(n)$ in \eqref{eq:defellIntro} and 
We also set $${\rm T}^{[1]}_n = {\rm T}_{D_{\ell_M(n)}(0)}(0,\partial D_{\ell_M(n)}(0))\text{ and }{\rm T}^{[2]}_n = {\rm T}_{D_{\ell_M(n)}(n \mathbf{e}_1)}(n\mathbf{e}_1,\partial D_{\ell_M(n)}(n \mathbf{e}_1)).$$  

In this section, we now restrict ourselves to the case $1<r\leq d$. We consider the following events:
\[F_1 = \left\{{\rm T}_n^{[1]} \geq \frac{(\xi+\varepsilon) n}{2}\right\}, \quad F_2 = \left\{ {\rm T}_n^{[2]} \geq \frac{(\xi+\varepsilon) n}{2}\right\},\]
and
\[G = \left\{\min_{x\in \partial D_{\ell_M(n)}(0),y\in \partial D_{\ell_M(n)}(n \mathbf{e}_1) } {\rm T}_{D_{\ell_M(n)}(0)^c \cap D_{\ell_M(n)}(n \mathbf{e}_1)^c}(x,y) \geq (\mu - \varepsilon) n \right\}.\]
Note that for $n$ large enough, we have $D_{\ell_M(n)}(0)\cap D_{\ell_M(n)}(n \mathbf{e}_1) = \emptyset$. It is then straightforward to check that $F_1 \cap F_2 \cap G \subset \{{\rm T}_n \geq (\mu + \xi) n\}$, and thus by the independence structure,
\begin{equation} \label{eq:lowerBoundFG}
\P\left({\rm T}_n\geq (\mu+\xi) n \right) \geq \P(F_1)^2 \,\P(G).
\end{equation} 

We begin with estimating $\P(F_1)$. This will give the main contribution. 
%Note that $\lambda_{d,r}(M) = 1$ when $r\leq 1$. 
%More generally, the function $M\to \lambda_{d,r}(M)$ is non-increasing and we will see that
%\[
%(i) \lambda_{d,r}(M)\to\lambda_{d,r} > 0 \text{ when } r<d
%\]  so that $\lambda_{d,r}(M)\to\lambda_{d,r}$ as $M\to\infty$. We now fix $M$ large enough such that $\lambda_{d,r}(M)  \leq  \lambda_{d,r} + \varepsilon$. 
Take $(t_e^\star)_{e\in E_{\ell_M(n)}}$ that minimizes $\lambda^P_{d,r}(D_{\ell_M(n)}(0),\partial D_{\ell_M(n)}(0))$ (see the definitions around \eqref{def:lambda}). In particular,
\[
\left\{\forall e\in E_{\ell_M(n)},  \tau_e \geq \frac{(\xi + \varepsilon)n}{2} t_e^\star\right\} \subset  \left\{{\rm T}_n^{[1]} \geq \frac{(\xi+\varepsilon) n}{2}\right\} = F_1.\]
On the other hand, using \eqref{cond:distr},
%\[
%\P\left(\forall e\in E_{\ell_M(n)},  \tau_e  \geq \frac{(\xi + \varepsilon)n}{2} t_e^\star\right) \geq \prod_{e\in E_{\ell_M(n)}} \int_{\frac{\xi + \varepsilon}{2} n t_e^\star}^\infty \beta_1 e^{-\alpha t^r}{\rm d} t.
%\]
%Let $a=\frac{(\xi+\varepsilon)} {2} n t_e^\star$. By integration by part, for all $c>0$,
%\[ \int_{a}^\infty e^{- c t^{r}} {\rm d} t = \int_{a}^\infty \frac{cr t^{r-1}}{cr t^{r-1}}e^{- c t^{r}} {\rm d} t = \frac{e^{-ca^r}}{cr a^{r-1}} - \int_{a}^\infty \frac{(r-1)e^{-ct^r}}{cr t^{r}},\]
%while

%\[\int_{a}^\infty \frac{(r-1)e^{-ct^r}}{cr t^{r}} \leq \int_{a}^\infty \frac{cr t^{r-1}}{cr a^{r-1}} \frac{(r-1)e^{-ct^r}}{cr a^{r}} = \frac{(r-1)e^{-ca^r}}{c^2r^2 a^{2r-1}}.
%\]
%Therefore, for $a\geq A=A(\varepsilon)$ large enough that we fix for now on,
%\[
%\int_{a}^\infty \beta_1 e^{-\alpha t^r} {\rm d} t
%\geq e^{\varepsilon a^r} \int_{a}^\infty \beta_1 e^{-(\alpha + \varepsilon) t^r} {\rm d} t \geq e^{-(\alpha + \varepsilon) a^r}.
%\]
%Moreover, there is $C_\varepsilon>0$ such that if $a\leq A$,
%\[\int_{a}^\infty \beta_1 e^{-\alpha t^r} {\rm d} t
%\geq e^{-(\alpha-\varepsilon) a^r} \int_{A}^\infty \beta_1 e^{-\varepsilon t^r} {\rm d} t \geq C_\varepsilon  e^{-(\alpha - \varepsilon) a^r}.  \]
%In any case,
\begin{align*}
  &\P\left(\forall e\in E_{\ell_M(n)},  \tau_e  \geq \frac{(\xi + \varepsilon)n}{2} t_e^\star\right)\\
   &=  \prod_{e\in E_{\ell_M(n)}}\P\left( \tau_e  \geq \frac{(\xi + \varepsilon)n}{2} t_e^\star\right) \\
  &\geq \beta_1^{|E_{\ell_M(n)}|} \exp{\left(-\alpha \left(\frac{(\xi + \varepsilon)n}{2}\right)^r \sum_{e\in E_{\ell_M(n)}} (t_e^\star)^r \right)}\\
& = \beta_1^{|E_{\ell_M(n)}|} \exp{\left(-\alpha \left(\frac{(\xi + \varepsilon)n}{2}\right)^r \lambda_{d,r}(\ell_M(n))\right)},
\end{align*}
where we have used \eqref{def:lambdaIntro} in the last line.

When $r<d$, we have $\frac{d(r-1)}{d-1}<r$ and hence $|E_{\ell_M(n)}|\leq C_d n^{\frac{d(r-1)}{d-1}} =o(n^r)$. This yields
\begin{equation} \label{eq:lowBPF1rlessd}
\varliminf_{n\to\infty} \frac{1}{n^r} \log \P(F_1) \geq -\alpha  \left(\frac{\xi + \varepsilon}{2}\right)^r \lim_{n\to\infty} \lambda_{d,r}(\ell_M(n)). 
\end{equation}
When $r=d$, we have $|E_{\ell_M(n)}| \leq C_d \frac{n^{d}}{(\log n)^{d}}=o(n^{d} \lambda_{d,d}(\ell_M(n)))$ by Theorem~\ref{asymptotic for lambda general}, so that
\begin{equation} \label{eq:lowBPF1}
\begin{aligned}
&\varliminf_{n\to\infty} \frac{1}{n^d \lambda_{d,d}(\ell_M(n))} \log \P(F_1)\geq -\alpha \left(\frac{\xi + \varepsilon}{2}\right)^d. 
\end{aligned}
\end{equation}

We now turn to estimating $\P(G)$. For all $x\in \partial D_{\ell_M(n)}(0)$, consider some path $\gamma_x$ inside $D_{\ell_M(n)}(0)$ that leads from $0$ to $x$ such that $|\gamma_x| \leq 2d \ell_M(n)$,  and let  $$U_n^{[1]} = \sup_{x\in \partial D_{\ell_M(n)}(0)} \sum_{e\in \gamma_x} \tau_e.$$ %\textcolor{red}{(You can simply deinfe $U_n^{[1]} = \sum_{e\subset D_0(M)} \tau_e$ here. But your definition still may work when $r=d$. So, please leave it for a while.)} 
Define $U_n^{[2]}$ similarly with $0$ replaced by $n\mathbf{e}_1$. We have,
\[{\rm T}_n \leq U_n^{[1]} + U_n^{[2]} + \min_{x\in \partial D_{\ell_M(n)}(0),y\in \partial D_{\ell_M(n)}(n \mathbf{e}_1)} {\rm T}_{D_{\ell_M(n)}(0)^c \cap D_{\ell_M(n)}(n\mathbf{e}_1)^c}(x,y),\]
Therefore,
\begin{align}
\P(G) & \geq \P\left ({\rm T}_n  \geq \left(\mu -  \frac{\varepsilon}{2}\right)n,  U_n^{[1]} < \frac{\varepsilon}{4}n, U_n^{[2]} < \frac{\varepsilon}{4}n\right) \nonumber\\
&\geq \P\left({\rm T}_n \geq \left(\mu - \frac{\varepsilon}{2}\right)n \right) - \P\left(U_n^{[1]} \geq \frac{\varepsilon}{4}n \text{ or } U_n^{[2]} \geq \frac{\varepsilon}{4}n\right)\nonumber\\
&\geq \P\left({\rm T}_n \geq \left(\mu - \frac{\varepsilon}{2}\right)n \right) - 2\P\left(U_n^{[1]} \geq \frac{\varepsilon}{4}n\right). \label{eq:lowBdPG}
\end{align}
By the union bound and the exponential Markov inequality,
\begin{align*}
\P\left(U_n^{[1]} \geq \frac{\varepsilon}{4}n\right) & \leq  C_d (\ell_M(n))^{d-1}\sup_{x\in \partial D_{\ell_M(n)}(0)} \P\left(\sum_{e\in \gamma_x} \tau_e \geq \frac{\varepsilon}{4}n\right) \\
&  \leq  C_d (\ell_M(n))^{d-1} \exp{\left(c 2 d \ell_M(n) - \frac{\varepsilon}{4}n\right)} \to 0,
\end{align*}
as $n\to\infty$, with $c=\log \E[e^{\tau_e}]$.
Since the the first term of \eqref{eq:lowBdPG} converges to $1$, it follows that $\lim_{n\to\infty} \P(G) = 1$.
%By the FKG inequality,
%\[\P(G) \geq \prod_{x\in \partial D_{\ell_M(n)}(0),y\in \partial D_{M}(n \mathbf{e}_1)}\P \left\{{\rm T}_{D_{\ell_M(n)}(0)^c \cap D(n\mathbf{e}_1,M)^c}(x,y) \geq (\mu - \varepsilon) n \right\} \]

Hence, { when $r\in (1,d)$}, by \eqref{eq:lowerBoundFG} and \eqref{eq:lowBPF1rlessd}, letting $n\to\infty$ followed by $\varepsilon\to 0$,  we obtain:
\begin{equation} \label{lower bound for d<r}
 \varliminf_{n\to\infty} \frac{1}{n^r} \log \P\left({\rm T}(0,nx) \geq (\mu+\xi) n \right)\geq -\alpha  2^{1-r} \xi^r \lambda_{d,r},
 \end{equation}
that is the lower bound in Theorem \ref{th:rLessThandBig1}.
When $r=d$, the estimates  \eqref{eq:lowerBoundFG} and \eqref{eq:lowBPF1} yield that
\begin{equation}
\begin{aligned}
 \varliminf_{n\to\infty} \frac{1}{n^d\,\lambda_{d,d}(\ell_M(n))} \log \P\big({\rm T}(0,nx)\geq (\mu+\xi) n \big)
\geq  -\alpha  2^{1-d} \, \xi^d.\label{lower bound for d=r}
\end{aligned}
\end{equation}
which gives a lower bound for Theorem \ref{th:requalsd} {(note that by Theorem \ref{th:upcoming} we can replace $\lambda_{d,d}(\ell_M(n))$ by $\lambda_{d,d}(n)$ since both are equivalent)}.
%\begin{align*}
%&\varliminf_{n\to\infty} \frac{(\log n)^{d-1}}{n^r} \log \P\left({\rm T}_n\geq (\mu+\xi) n \right)\\
%& \geq -\alpha  2^{1-r} \xi^r \varlimsup_{n\to\infty} \frac{(\log n)^{d-1}}{n^r}\lambda_{d,r}(\ell(n)),
%\end{align*}
%which is 

\subsection{Proof of Theorem~\ref{asymptotic of the rate function}}
 { We suppose that $r>d$.} We fix $\xi>0$ and $n\in\N$. We consider $\ell_{\delta \xi}(n)=\lceil \delta \xi n\rceil$, where $\delta$ is chosen later.   Let $${\rm T}^{[1]}_n = {\rm T}_{D_{\ell_{\delta \xi}(n)}(0)}(0,\partial D_{\ell_{\delta \xi}(n)}(0)).$$  We consider the following events:
\[F = \left\{{\rm T}_n^{[1]} \geq 2\xi n\right\},\,G = \left\{\min_{x\in \partial D_{\ell_{\delta \xi}(n)}(0) } {\rm T}_{D_{\ell_{\delta \xi}(n)}(0)^c}(x,n\mathbf{e}_1) \geq (\mu - \xi) n \right\}.\]
The same argument as in Section~\ref{section: lower bound r>1} shows
\al{
\P(F)&\geq \P(\forall e\in E_{\ell_{\delta \xi}(n)},\,\tau_e>2\delta^{-1})\\
&\geq  \exp{\left(-c_{\delta} \ell_{\delta \xi}( n)^d \right)}\geq \exp{\left(-c'_{\delta} \xi^{d} n^d\right)},
}
 with some constants $c_\delta,c'_\delta>0$ depending on $\delta$ and $\P(G)\geq 1/2$ if we take $\delta$ small enough. As in Section~\ref{section: lower bound r>1}, we have
\begin{equation} \label{eq:lowerBoundr>d}
\P({\rm T}_n>(\mu+\xi)n)\geq \P(F)\P(G) \geq
\frac{1}{2}\exp{\left(-c'_{\delta} \xi^{d} n^d\right)},
\end{equation}
which implies $\overline{I}(\xi)\leq c'_{\delta} \xi^{d}$. It yields the theorem.
\section{The upper bound}
As before, we write ${\rm T}_n={\rm T}(0,n\mathbf{e}_1)$ and  $\mu=\mu(\mathbf e_1)$. We begin by a lemma that will play a crucial role. The lemma is a variant of \cite[Lemma 3.1]{CZ03} -- in fact, the case $r\geq 1$ is proved therein. We prove it in Appendix \ref{appendixA}. 
\begin{lem}\label{lem:Zhang}
 Assume \eqref{UB condition}. For any $\e>0$, there exist $K=K(\e)\in\N$ and a positive constant $c=c(\e)$ such that for any $n\in \N$,
\[\P\left({\rm T}_{\R\times[-K,K]^{d-1}}\left(0,n\mathbf{e}_1\right)\geq (\mu+\e)n\right)\leq\exp{(-cn^{r\land 1})}.
\]
\end{lem}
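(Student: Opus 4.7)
The plan is a renewal-style block decomposition of the slab, reducing the constrained first passage time to a sum of i.i.d.\ single-block crossing times, followed by a concentration bound whose nature depends on whether $r\ge 1$ or $r<1$. First, using subadditivity along $\mathbf e_1$, the slab time constant $\mu_K := \lim_n n^{-1} \E[{\rm T}_{\R \times [-K,K]^{d-1}}(0, n \mathbf e_1)]$ exists, and by standard slab-approximation results in FPP, $\mu_K \downarrow \mu$ as $K\to\infty$. Given $\e>0$ I would choose $K=K(\e)$ large enough that $\mu_K \le \mu + \e/3$, and then pick a block length $L=L(\e,K)$ large enough that the crossing time of a single block $\sigma_0 := {\rm T}_{[0,L]\times [-K,K]^{d-1}}(0, L\mathbf e_1)$ satisfies $\E[\sigma_0] \le (\mu + \e/2)L$; that $\E[\sigma_0]/L \to \mu_K$ follows from Kingman's theorem combined with uniform integrability, using that $\sigma_0$ is dominated by a fixed-path sum of $O(L)$ edge weights, each with finite mean under \eqref{UB condition}.

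Partitioning $\R\times[-K,K]^{d-1}$ into consecutive blocks $B_i=[iL,(i+1)L]\times[-K,K]^{d-1}$ and setting $\sigma_i := {\rm T}_{B_i}((iL)\mathbf e_1,((i+1)L)\mathbf e_1)$, the variables $\sigma_i$ are i.i.d.\ (depending on disjoint edge sets), and concatenating block-optimal paths gives
\[{\rm T}_{\R\times[-K,K]^{d-1}}(0, n\mathbf e_1)\ \le\ \sum_{i=0}^{m-1}\sigma_i,\qquad m:=\lceil n/L\rceil.\]
Thus it suffices to bound $\P\bigl(\sum_{i=0}^{m-1}(\sigma_i-\E\sigma_i)\ge \tfrac{\e}{2}n\bigr)$. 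A stretched-exponential tail for $\sigma_0$ follows by comparing with any deterministic path of length $\le CL$ inside $B_0$: via a union bound one gets $\P(\sigma_0>t)\le CL\,\beta\exp(-\alpha (t/(CL))^r)$ for $t$ large.

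For the sum concentration, when $r\ge 1$ the $\sigma_i$'s have all exponential moments and a standard Cram\'er/Chernoff bound applied to the centered sum gives a bound of the form $\exp(-cm)\le\exp(-c'n)$; this is essentially the case already treated in \cite{CZ03}. When $r<1$, no such exponential moment exists, and the natural tool is a Nagaev-type inequality for i.i.d.\ sums with subexponential tails, of the shape
\[\P\Bigl(\sum_{i=0}^{m-1}(\sigma_i-\E\sigma_i)>s\Bigr)\ \le\ C m\exp(-cs^r)\ +\ \exp(-c' s^2/m).\]
With $s=\e n/2$ and $m=n/L$, the first term is $\le Cn\exp(-cn^r)$ and the second is $\le\exp(-c''n)$, both of which are $\le\exp(-c''' n^r)$ for $n$ large. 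The main obstacle, and the new ingredient relative to \cite{CZ03}, is precisely this $r<1$ step: one must verify that the heavy-tailed ``one big jump'' contribution $m\,\P(\sigma_0>\mathrm{const}\cdot s)$ carries the correct rate $n^r$ once the polynomial prefactor is absorbed into the exponent, and this is exactly where the exponent $r\wedge 1=r$ in the statement originates.
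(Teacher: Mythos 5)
Your block-renewal strategy is sound and shares the same overall architecture as the paper's proof (both decompose the slab crossing into a sum of single-block crossings and then appeal to concentration for sums with stretched-exponential tails), but you implement the two key steps differently. The paper does not estimate the \emph{mean} of a block crossing time; instead, it uses Kingman's theorem and the a.s.\ convergence ${\rm T}_{[-M,M]^d}(0,K\mathbf{e}_1)\to{\rm T}(0,K\mathbf{e}_1)$ to fix $K<M$ so that $\P\bigl({\rm T}_{[-M,M]^d}(0,K\mathbf{e}_1)\geq\mu(1+\e/2)K\bigr)$ is small, then works with overlapping blocks $iK\mathbf{e}_1+[-M,M]^d$, splits the index set mod $M$ to restore independence, truncates each block crossing time at level $(\mu+\e/2)K$, and controls the atypical (truncated) contribution via a Cauchy--Schwarz moment bound together with the stretched-exponential large-deviation estimate of \cite{GRR}. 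You instead centre the sum directly at $\E\sigma_0$ and invoke a Nagaev-type inequality; for $r<1$ the ``one-big-jump'' term of that inequality supplies the $\exp(-cn^r)$ rate, and for $r\ge1$ Chernoff supplies rate $n$. Both routes work; the paper's avoids any appeal to slab or box time constants, while yours leans on them.

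Two points should be tightened. First, the $\sigma_i$ in your decomposition are \emph{not} independent as written: the blocks $B_i=[iL,(i+1)L]\times[-K,K]^{d-1}$ and $B_{i+1}$ share all edges lying inside the hyperplane $\{x_1=(i+1)L\}$, and the corresponding crossing paths may both use those edges. This is easily repaired (split into even and odd $i$, or shrink each block in the $x_1$-direction by one unit and add the connecting edge), but it must be addressed before the i.i.d.\ Nagaev/Chernoff bound applies. Second, what $\E[\sigma_0]/L$ converges to (by subadditivity of $L\mapsto\E[\sigma_0]$, not Kingman plus uniform integrability) is the \emph{box}-restricted constant $\tilde\mu_K:=\inf_L L^{-1}\E\bigl[{\rm T}_{[0,L]\times[-K,K]^{d-1}}(0,L\mathbf{e}_1)\bigr]$, which a priori satisfies only $\tilde\mu_K\geq\mu_K$; equating the two would require an argument that near-optimal slab paths do not backtrack macroscopically in the $x_1$-direction. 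Fortunately your argument only needs $\tilde\mu_K\to\mu$ as $K\to\infty$, which follows from the same concatenation/translation argument used to show $\mu_K\to\mu$, so the conclusion stands --- but the claim $\E[\sigma_0]/L\to\mu_K$ as stated is an unjustified shortcut and should be replaced by the $\tilde\mu_K$ statement.
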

\subsection{Proof of the upper bound in Theorem \ref{thm:main1} ($r\leq 1$)}

Let $\e\in(0,\xi)$ and take $K=K(\e)\in\N$ and $c=c(\e)>0$ as in Lemma~\ref{lem:Zhang}. Let $M=M(\xi,\e,c,K)\in 3K\N$ so that
\ben{\label{choice:M} \frac{cM}{K}>12d\alpha\xi^r.}
Given $v\in \mathbb{Z}^{d-1}$ and $n\in\N$, we define a slab  as
\[\S_v=\S_v(K)=\mathbb R \times (v+[-K,K]^{d-1}),\]
and write
\[v^{[1]}_n=(0,v)\text{ and }v^{[2]}_n=(n,v).\]
See also Figure \ref{figure2}. Letting \[{\rm B}_{K,M}=3K\Z^{d-1}\cap [-M,M]^{d-1},\]
we note that $v^{[1]}_n\in D_M(0)$ and $v^{[2]}_n\in D_M(n\mathbf{e}_1)$ whenever $v\in {\rm B}_{K,M}$.

  For $v\neq w\in {\rm B}_{K,M}$, since $\S_v$ and $\S_w$ are disjoint, ${\rm T}_{\S_v}\left(v^{[1]}_n,v^{[2]}_n\right)$ and ${\rm T}_{\S_w}\left(w^{[1]}_n,w^{[2]}_n\right)$ are independent. Moreover, for $v\in {\rm B}_{K,M}$, $\S_v$ is congruent with $\mathbb R\times[-K,K]^{d-1}$. 
%%%%%
\iffalse
Before going into the proof, we briefly explain the heuristics of the proof. By Lemma~\ref{lem:Zhang}, with high probability, we can find $v\in {\rm B}_{K,M}$ such that
$${\rm T}_{\S_v}\left(v^{[1]}_n,v^{[2]}_n\right)\leq (\mu+\e)n.$$
By the triangular inequality,
\al{
  {\rm T}_n&\leq {\rm T}(0,v^{[1]}_n)+{\rm T}_{\S_v}\left(v^{[1]}_n,v^{[2]}_n\right)+{\rm T}\left(v^{[2]}_n,n\mathbf{e}_1\right)\\
  &\leq {\rm T}(0,v^{[1]}_n)+{\rm T}\left(v^{[2]}_n,n\mathbf{e}_1\right)+(\mu+\e)n.
  }
Thus, ${\rm T}_n> (\mu+\xi)n$ implies ${\rm T}\left(0,v^{[1]}_n\right)+{\rm T}\left(v^{[2]}_n,n\mathbf{e}_1\right)\geq (\xi-\e)n$. In order to estimate the latter event, we appeal to the large deviations of the sum of independent random variables (See Lemma~\ref{exp:est} below). Let us make the above heuristics rigorous.
\fi
%%%%%%
  By Lemma~\ref{lem:Zhang} and \eqref{choice:M}, since ${|{\rm B}_{K,M}|}\geq M/3K$, we obtain that for $n\in\N$,
  \al{
\P\left(\forall v\in {\rm B}_{K,M},~{\rm T}_{\S_v}\left(v^{[1]}_n,v^{[2]}_n\right)\geq (\mu+\e)n\right)  &\leq \exp{\left(-cn^r{|{\rm B}_{K,M}|}\right)}\\
  &\leq \exp{(-4d\alpha \xi^r n^r)}.
}
Thus, by the union bound,
\begin{equation}
\begin{aligned}
  & \P({\rm T}_n>(\mu+\xi)n)\\
%  &\leq \P\left({\rm T}_n>(\mu+\xi)n,~\exists v\in {\rm B}_{K,M}\text{ s.t. }{\rm T}_{\S_v}\left(v^{[1]}_n,v^{[2]}_n\right)<(\mu+\e)n\right)\\
%
%  &\hspace{8mm}+\P\left(\forall v\in {\rm B}_{K,M}\text,~{\rm T}_{\S_v}\left(v^{[1]}_n,v^{[2]}_n\right)\geq (\mu+\e)n\right)\\
 &\leq \P\left({\rm T}_n>(\mu+\xi)n,~\exists v\in {\rm B}_{K,M}\text{ s.t. }{\rm T}_{\S_v}\left(v^{[1]}_n,v^{[2]}_n\right)<(\mu+\e)n\right)\\
  &\hspace{8mm}+\exp{(-4d\alpha \xi^r n^r)}.
  \end{aligned}
  \label{eq:middleNegl}
 \end{equation}
%where the second term is negligible.

{The next lemma shows that under the event appearing in the right-hand side of \eqref{eq:middleNegl}, there are two points $x,y$ close to the origin and $n\mathbf{e}_1$ such that the sum of the first passage times from $0$ to $x$ and from $y$ to $n\mathbf{e}_1$ are at least of the order of $\xi n$. This last event will give the main contribution to the large deviation probability $\P({\rm T}_n>(\mu+\xi)n)$.}
\begin{lem}
  Suppose that ${\rm T}_n>(\mu+\xi)n$ and that there exists $v\in {\rm B}_{K,M}$\text{ such that }$${\rm T}_{\S_v}\left(v^{[1]}_n,v^{[2]}_n\right)<(\mu+\e)n.$$ Then, there exist $x\in D_M(0)$ and $y\in D_M(n\mathbf{e}_1)$ such that
\[
{\rm T}(0,x)+{\rm T}(y,n\mathbf{e}_1)\geq (\xi-\e)n.
\]
\end{lem}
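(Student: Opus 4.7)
The lemma follows from the subadditivity of the first passage time together with the obvious fact that restricting to a slab only increases the passage time. The plan is to take $x=v^{[1]}_n$ and $y=v^{[2]}_n$, where $v\in {\rm B}_{K,M}$ is the point provided by the hypothesis, and verify that these points lie in the correct boxes and give the required bound.

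First I would check the inclusions. Since $v\in{\rm B}_{K,M}\subset [-M,M]^{d-1}$, we have $|v^{[1]}_n-0|_\infty=|(0,v)|_\infty=|v|_\infty\le M$, so $v^{[1]}_n\in D_M(0)$; similarly $|v^{[2]}_n-n\mathbf e_1|_\infty=|v|_\infty\le M$, so $v^{[2]}_n\in D_M(n\mathbf e_1)$. These are exactly the containments required.

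Next, I would apply the triangular inequality \eqref{subadd} twice to the three points $0$, $v^{[1]}_n$, $v^{[2]}_n$, $n\mathbf e_1$:
\[
{\rm T}_n\;\le\;{\rm T}(0,v^{[1]}_n)+{\rm T}(v^{[1]}_n,v^{[2]}_n)+{\rm T}(v^{[2]}_n,n\mathbf e_1).
\]
Since ${\rm T}(v^{[1]}_n,v^{[2]}_n)\le {\rm T}_{\S_v}(v^{[1]}_n,v^{[2]}_n)$ (the unrestricted infimum is at most the restricted one), the hypothesis gives
\[
{\rm T}_n\;<\;{\rm T}(0,v^{[1]}_n)+(\mu+\e)n+{\rm T}(v^{[2]}_n,n\mathbf e_1).
\]
Combining with ${\rm T}_n>(\mu+\xi)n$ and rearranging yields
\[
{\rm T}(0,v^{[1]}_n)+{\rm T}(v^{[2]}_n,n\mathbf e_1)\;>\;(\xi-\e)n,
\]
which is the desired conclusion with $x=v^{[1]}_n$, $y=v^{[2]}_n$.

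There is no real obstacle: the lemma is a direct algebraic consequence of the triangle inequality and the hypothesis, and the choice of $M$ (in multiples of $3K$) is tuned elsewhere in the argument only to guarantee that $v^{[1]}_n, v^{[2]}_n$ lie inside $D_M(0), D_M(n\mathbf e_1)$, which we have just verified. Nothing in the proof uses the tail assumption \eqref{cond:distr} or the specific value of $r$, so the same statement would work verbatim for any $r>0$.
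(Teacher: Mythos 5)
Your proof is correct and matches the paper's argument: both use the triangle inequality through $v^{[1]}_n$ and $v^{[2]}_n$ together with the bound ${\rm T}(v^{[1]}_n,v^{[2]}_n)\le {\rm T}_{\S_v}(v^{[1]}_n,v^{[2]}_n)$ and the hypotheses, then rearrange. You make the containment of $v^{[1]}_n,v^{[2]}_n$ in the boxes $D_M(0),D_M(n\mathbf e_1)$ explicit, which the paper leaves implicit, but otherwise the two proofs are the same.
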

  \begin{proof}
    Let $v\in {\rm B}_{K,M}$ be such that ${\rm T}_{\S_v}\left(v^{[1]}_n,v^{[2]}_n\right)<(\mu+\e)n$. 
     By the triangular inequality,
    \al{
      (\mu+\xi)n< {\rm T}_n
      &\leq {\rm T}(0,v^{[1]}_n)+{\rm T}(v^{[2]}_n,n\mathbf{e}_1)+{\rm T}_{\S_v}\left(v^{[1]}_n,v^{[2]}_n\right)\\
      &< {\rm T}(0,v^{[1]}_n)+{\rm T}(v^{[2]}_n,n\mathbf{e}_1)+(\mu+\e)n.
      }
   Thus,
    $${\rm T}(0,v^{[1]}_n)+{\rm T}(v^{[2]}_n,n\mathbf{e}_1)\geq (\xi-\e)n,$$ and $x=v^{[1]}_n$ and $y=v^{[2]}_n$ are the desired objects.
  \end{proof}
  Using the lemma above,
  \aln{
    & \P\left({\rm T}_n>(\mu+\xi)n,~\exists v\in {\rm B}_{K,M}\text{ s.t. }{\rm T}_{\S_v}\left(v^{[1]}_n,v^{[2]}_n\right)<(\mu+\e)n\right)\nonumber\\
    &\leq \P(\exists{}x\in D_M(0),~\exists y\in D_M(n\mathbf{e}_1)\text{ s.t. }{\rm T}(0,x)+{\rm T}(y,n\mathbf{e}_1)\geq (\xi-\e)n)\nonumber\\
    &\leq \sum_{x\in D_M(0)}\sum_{y\in D_M(n\mathbf{e}_1)}\P({\rm T}(0,x)+{\rm T}(y,n\mathbf{e}_1)\geq (\xi-\e)n).\label{comp3}
}
%    &\leq \sum_{x\in D_M(0)}\sum_{y\in D_M(n\mathbf{e}_1)}\sum_{\ell\geq (\xi-\e)n-1}\underset{a,b\in\Z_{\geq 0}}{\sum_{a+b= \ell}}\P({\rm T}(0,x)\geq a,~{\rm T}(y,n\mathbf{e}_1)\geq b).
  To estimate the inside of the summation, for $n>8M$, we consider $4d$ disjoint paths $\{r^x_i\}^{2d}_{i=1}\subset D_{2M}(0)$ from $0$ to $x$ and $\{r^y_i\}^{2d}_{i=1}\subset D_{2M}(n\mathbf{e}_1)$ from $y$ to $n\mathbf{e}_1$ so that
  $$\max\{{|r^{z}_i|}:~i\in\{1,\cdots,2d\},~z\in\{x,y\}\}\leq 4dM,$$
  where ${|r|}$ is the number of edges in a path $r$, as in \cite[p 135]{Kes86}. We get
\al{
  &\qquad \P({\rm T}(0,x)+{\rm T}(y,n\mathbf{e}_1)\geq (\xi-\e)n) \\
  &\leq \P(\forall i\in\{1,\cdots,2d\},~{\rm T}(r_i^x)+{\rm T}(r^y_i)\geq (\xi-\e)n)\nonumber\\
  &= \prod_{i=1}^{2d}\P\left(\sum_{e\in r_i^x\cup r_i^y}\tau_e \geq (\xi-\e)n\right)\nonumber\\
  &\leq \exp{(-2d(1-\e)\alpha ((\xi-\e) n)^r )},
}
where we have used Lemma~\ref{exp:est} below in the last line.
\begin{lem}\label{exp:est}
  Let $(X_i)_{i=1}^k$ be identical{ly} and independent{ly} { distributed} satisfying \eqref{UB condition}  with $r\in (0,1]$. Then for any $c\in (0,1)$ there exists $n_0=n_0(k,c)$ such that for any $n\geq n_0$, 
 \[\P\left(\sum_{i=1}^k X_i>n\right)\leq  \exp{(-(1-c)\alpha n^r)}.\]
\end{lem}
\begin{proof}
  Since $\E \exp{(a X_1^r)}<\infty$ for $a<\alpha$ and
  \ben{\label{concave}
    \left(\displaystyle\sum_{i=1}^k x_i\right)^r\leq \displaystyle\sum_{i=1}^k x_i^r\text{ for $x_i\geq 0$},
    }we obtain by the exponential Markov inequality, for {any $c\in (0,1)$}, for sufficiently large $n$,
  \al{
    \P\left(\sum_{i=1}^k X_i>n\right)&\leq \exp{(-(1-c/2)\alpha n^r)}(\E \exp{((1-c/2) \alpha X_1^r)})^k\\
    &\leq \exp{(-(1-c)\alpha n^r)}.
    }
\end{proof}
%\begin{remark}
%Similarly, if $b\geq \ell-\e n$, then
%\al{
%  \P({\rm T}(0,x)\geq a,~{\rm T}(y,n\mathbf{e}_1)\geq b)\geq \exp{(-2d(1-\e)\alpha(\ell-\e n)^r)}.
%}
%On the other hand, if $a,b\geq \e n$, since $a+b=\ell$, then
%\al{
%  & \P({\rm T}(0,x)\geq a,~{\rm T}(y,n\mathbf{e}_1)\geq b)\\
%  &\leq \prod_{i=1}^{2d}\P\left(\sum_{e\in r_i^x}\tau_e\geq a\right)\prod_{i=1}^{2d}\P\left(\sum_{e\in %r_i^y}\tau_e\geq b\right)\\
%  &\leq \exp{(-2d(1-\e)\alpha \ell^r)}.
%}
%\end{remark}
Therefore, \eqref{comp3} can be bounded from above by
\al{
  & \sum_{x\in D_M(0)}\sum_{y\in D_M(n\mathbf{e}_1)}\exp{(-2d(1-\e)\alpha((\xi-\e) n)^r)}\\
  &\leq (4M)^{2d}\exp{(-2d(1-\e)\alpha((\xi-\e) n)^r)}{ .}\\
  %&\leq \exp{(-(2d\alpha\xi^r +o_\e(1)) n^r)},
}
%where $o_\e(1)$ is a positive constant depending on $\e$ and $n$, which converges to $0$ as $\e\to 0$ after $n\to\infty$. 
Since $\e$ is arbitrary, letting $\e\to 0$ after $n\to\infty$, we get by \eqref{eq:middleNegl} that
$$\varlimsup_{n\to\infty}\frac{1}{n^r}\log{\P({\rm T}_n>(\mu+\xi)n)}\leq -2d\alpha \xi^r.$$

\subsection{Proof of Theorem \ref{prop1}}

We fix $r > 1$ throughout this section. We prove Theorem \ref{prop1} by induction on $d\geq 1$. In order to use the induction hypothesis, we will rely on the two following technical lemmas.
\begin{lem}\label{prop LDP} Assume that the conclusion of Theorem \ref{prop1} holds for some $d\geq 1$.
   Then there exist $c_2=c_2(d,r),c=c(d,r)>0$ such that for all $L_1\in \mathbb N$, $L_2\geq L_1$ and $v,w\in[0,L_1]^d$,
  \begin{equation} \label{eq:lemCorInduction}
\P\left( {\rm T}_{[0,L_1]^d}(v,w)> c_2 L_2 \right)\le \exp{(-c\,{\rm g}_{d,r}(L_1,L_2))}.
  \end{equation}
\end{lem}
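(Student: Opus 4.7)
My plan is a three-term triangle decomposition through the corners $0$ and $L_1\mathbf{e}_1$ of the cube, together with an axis-by-axis reduction for the off-corner contributions. By subadditivity,
\[
 {\rm T}_{[0,L_1]^d}(v,w) \leq {\rm T}_{[0,L_1]^d}(v,0) + {\rm T}_{[0,L_1]^d}(0,L_1\mathbf{e}_1) + {\rm T}_{[0,L_1]^d}(L_1\mathbf{e}_1,w),
\]
and the middle corner-to-corner term is handled directly by the hypothesis: applying Theorem~\ref{prop1} with $n=L_1$, $N=L_2\geq L_1$ and $\xi=1$ gives
\[
\P\bigl({\rm T}_{[0,L_1]^d}(0,L_1\mathbf{e}_1)>(\mu+1)L_2\bigr) \leq \exp\bigl(-c\,{\rm g}_{d,r}(L_1,L_2)\bigr).
\]

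For the two outer point-to-corner terms I would cross the cube one coordinate at a time. Writing $v=(v_1,\dots,v_d)$ and $w_k=(v_1,\dots,v_k,0,\dots,0)$, so that $w_0=0$ and $w_d=v$, subadditivity yields ${\rm T}_{[0,L_1]^d}(v,0)\leq \sum_{k=0}^{d-1}{\rm T}_{[0,L_1]^d}(w_k,w_{k+1})$. Since $w_k$ and $w_{k+1}$ differ only in the $(k{+}1)$-st coordinate by $v_{k+1}\leq L_1$, one can always select a sub-cube $C_k\subset [0,L_1]^d$ of side $v_{k+1}$ having $w_k$ and $w_{k+1}$ at opposite corners along an axis direction: for each coordinate $i$ at least one of the intervals $[w_k^{(i)}-v_{k+1},w_k^{(i)}]$, $[w_k^{(i)},w_k^{(i)}+v_{k+1}]$ lies in $[0,L_1]$ because $v_{k+1}\leq L_1$. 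Monotonicity of ${\rm T}_A$ in $A$ together with the translation and coordinate-permutation invariance of the i.i.d.\ weights gives
\[
{\rm T}_{[0,L_1]^d}(w_k,w_{k+1}) \leq {\rm T}_{C_k}(w_k,w_{k+1}) \overset{d}{=} {\rm T}_{[0,v_{k+1}]^d}(0,v_{k+1}\mathbf{e}_1).
\]
A second application of the hypothesis, now with $n=v_{k+1}\leq L_1$ and $N=L_2$, produces
\[
\P\bigl({\rm T}_{C_k}(w_k,w_{k+1})>(\mu+1)L_2\bigr) \leq \exp\bigl(-c\,{\rm g}_{d,r}(v_{k+1},L_2)\bigr) \leq \exp\bigl(-c\,{\rm g}_{d,r}(L_1,L_2)\bigr),
\]
where the last inequality uses the elementary monotonicity of ${\rm g}_{d,r}(\cdot,L_2)$ on $[1,L_1]$ (constant for $r<d$, decreasing for $r\geq d$). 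The analogous decomposition through $L_1\mathbf{e}_1$ handles the $w$-side contribution.

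Setting $c_2\geq (2d+1)(\mu+1)$ and union-bounding over the $1+2d$ deviation events completes the argument. The main subtlety I anticipate is preserving the correct exponent $c\,{\rm g}_{d,r}(L_1,L_2)$ for all $r>1$: in the regime $r\leq d$, a naive path-sum Chernoff bound of the form $\exp(-cL_2^r/L_1^{r-1})$ obtained from a straight axis-aligned path of length $\leq dL_1$ would be strictly too weak, and it is precisely the passage through sub-cubes combined with the monotonicity of ${\rm g}_{d,r}$ in its first argument that recovers the right rate.
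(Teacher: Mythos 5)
Your overall strategy---reduce to corner-to-corner traversals along one axis at a time, apply Theorem~\ref{prop1} to each, and exploit that $n\mapsto {\rm g}_{d,r}(n,N)$ is non-increasing---is the same as the paper's, and the $\xi=1$, $N=L_2$, $c_2=(2d+1)(\mu+1)$ bookkeeping would be fine. But there is a genuine gap in the sub-cube construction. You claim that for each coordinate $i$, at least one of $[w_k^{(i)}-v_{k+1},w_k^{(i)}]$, $[w_k^{(i)},w_k^{(i)}+v_{k+1}]$ lies in $[0,L_1]$ ``because $v_{k+1}\leq L_1$''. That is false: both fail precisely when $L_1-v_{k+1}<w_k^{(i)}<v_{k+1}$, which is a non-empty range whenever $v_{k+1}>L_1/2$. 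Since some earlier coordinate $v_i$ ($i\leq k$) can land in that window, the cube $C_k$ of side $v_{k+1}$ need not fit inside $[0,L_1]^d$ with $w_k,w_{k+1}$ as corners, so the stochastic-domination step ${\rm T}_{C_k}(w_k,w_{k+1})\overset{d}{=}{\rm T}_{[0,v_{k+1}]^d}(0,v_{k+1}\mathbf e_1)$ collapses.

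The paper avoids exactly this by halving first: set $l=|v_1-w_1|/2\leq L_1/2$, embed the one-axis step into a rectangle $z+[0,2l]\times[0,l]^{d-1}$ (which always fits because the transverse sides have length only $l\leq L_1/2$), then split the axis move at its midpoint into two corner-to-corner traversals of cubes of side roughly $l$. Each of these sub-cubes has side at most $L_1/2$, so for every coordinate $i$ one of the two candidate intervals of length $\leq L_1/2$ around the common value does lie in $[0,L_1]$. Your argument needs that halving (or some equivalent device forcing the traversal cubes to have side $\leq L_1/2$); with it in place the rest of your write-up goes through, and your separate routing through $0$ and $L_1\mathbf e_1$ is harmless but unnecessary compared to the paper's direct $v\to w$ coordinate chain.
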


\begin{proof}[Proof of Lemma \ref{prop LDP}]
Let $v(0)=v$ and $$v(i)=(w_1,\cdots,w_i,v_{i+1},v_{i+2},\cdots,v_d).$$ We have,
  \begin{equation} \label{eq:decv}
  \begin{split}
    \P\left( {\rm T}_{[0,L_1]^d}(v,w)> c_2L_2\right)\le \sum^d_{i=1}\P \left({\rm T}_{[0,L_1]^d}(v(i-1),v(i))> d^{-1} c_2L_2 \right).
  \end{split}
  \end{equation}
  First consider the term for $i=1$ and let
  $$l=|v_1-w_1|/2=|v(1)-v(0)|_{{\infty}}{/2}\leq L_1/2.$$
  Suppose that $v_1<w_1$ and $v+[0,2l]\times [0,l]^{d-1} \subset [0,L_1]^d$. We have,
\[
\P ({\rm T}_{[0,L_1]^d}(v(0),v(1))> d^{-1} c_2L_2 )\le \P ({\rm T}_{[0,2l]\times [0,l]^{d-1}}(0,2l \mathbf{e}_1)> d^{-1} c_2L_2 ),\]
which is less than or equal to
\begin{align*}
  & \P \left({\rm T}_{[0,2l]\times [0, l ]^{d-1}}(0, \lfloor l \rfloor \mathbf{e}_1)> (2d)^{-1} {c_2L_2} \right)\\
  &\qquad\qquad+\P \left({\rm T}_{[0,2l]\times [0,l]^{d-1}}(\lfloor l \rfloor \mathbf{e}_1, 2l \mathbf{e}_1)> (2d)^{-1}{c_2L_2}\right )\\
  &\leq   \P \left({\rm T}_{[0,\lfloor l \rfloor]^d}(0, \lfloor l \rfloor\mathbf{e}_1)> (2d)^{-1}{c_2L_2}\right )\\
  &\qquad\qquad+ \P \left({\rm T}_{[0,2l-\lfloor l \rfloor]^d}(0, (2l-\lfloor l \rfloor)\mathbf{e}_1)> (2d)^{-1}{c_2L_2}\right ).
\end{align*}
If we now let $c_2$ be large enough  so that $N:=\sqrt{c_2} L_2 \geq l+1$ and $(2d)^{-1}\sqrt {c_2} \geq \mu(\mathbf e_1)+1$, then the estimate \eqref{eq:thInduction} yields that for $n=\lfloor l\rfloor$ or $n= 2l - \lfloor l \rfloor$,
\begin{align*}
\P \left({\rm T}_{[0,n]^d}(0,n\mathbf{e}_1)> (2d)^{-1} c_2L_2 \right)& \leq \exp{\left(-c_{r,d} {\rm g}_{d,r}(n,N)\right)}\\
&\leq \exp{\left(- c_{r,d}'{\rm g}_{d,r}(L_1,L_2)\right)},
\end{align*}
for some $c'_{r,d}>0$, where the last inequality holds taking $c_2$ large enough, by using in particular that $n\to g(n,N)$ is non-increasing and that $n\leq L_1$.

In general, there exists a rectangle $z+[0,2l]\times[0,l]^{d-1}$ for $z\in\mathbb Z^d$, {included in $[0,L_1]^d$,} such that $v$ and $v(1)$ are corners of the rectangle, and the previous argument applies up to rotations.
    The same can be done for $v(2),\dots,v(d)$, so using \eqref{eq:decv} this entails \eqref{eq:lemCorInduction}.
\end{proof}

\begin{lem} \label{rem:face} Assume that the conclusion of Lemma \ref{prop LDP} holds for dimension $d-1$ with $d\geq 1$.
Consider $J$ to be a union of faces of the cube $[0,L]^{d}$ and assume that $J$ is a connected set. There exist $c_2=c_2(d,r)>0$ and $c=c(d,r)>0$, such that for all $L'\geq L$ {with $L\in \mathbb N$}, 
\begin{equation} \label{eq:LDPface}
\P\left(\text{$\exists v,w\in J$  s.t. ${\rm T}_{J}(v,w)> c_2 L'$}\right) \leq  \exp{\left(-
c\,{\rm g}_{d-1,r}\left(L,L'\right)\right)}.
\end{equation}
\end{lem}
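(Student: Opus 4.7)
The plan is to reduce \eqref{eq:LDPface} to the $(d-1)$-dimensional case of Lemma~\ref{prop LDP} via a face-by-face union bound, combined with a bounded chain argument that exploits the connectedness of $J$.

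Each face $F$ of $[0,L]^d$ is isometric to the $(d-1)$-dimensional cube $[0,L]^{d-1}$ through a lattice-preserving affine map, and the i.i.d.\ weights on the induced edges of $F$ have the same joint distribution as the weights on $[0,L]^{d-1}\cap \Z^{d-1}$. Therefore the induction hypothesis (the $d-1$ version of Lemma~\ref{prop LDP}) yields, for each face $F\subset J$, each $v,w\in F\cap \Z^d$, and each $L'\ge L$,
\[
\P\left({\rm T}_F(v,w)>c_2 L'\right) \le \exp\left(-c\,{\rm g}_{d-1,r}(L,L')\right).
\]
Since the cube has at most $2d$ faces and each contains at most $(L+1)^{d-1}$ lattice points, a union bound gives
\[
\P\left(\exists F\subset J,\,\exists v,w\in F\cap\Z^d : {\rm T}_F(v,w) > c_2 L'\right) \le 2d(L+1)^{2(d-1)}\exp\left(-c\,{\rm g}_{d-1,r}(L,L')\right).
\]

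On the complement of this event, every $v,w\in J\cap\Z^d$ satisfies ${\rm T}_J(v,w)\le 2d\, c_2 L'$. Indeed, since $J$ is a connected union of at most $2d$ faces, for any $v\in F_0\subset J$ and $w\in F_k\subset J$ there is a chain $F_0,F_1,\ldots,F_k\subset J$ with $k+1\le 2d$ and $F_{i-1}\cap F_i\ne\emptyset$ for each $i\ge 1$ (this uses the connectedness of $J$ together with the fact that any two non-opposite faces of the cube share an edge). Picking $x_i\in F_{i-1}\cap F_i\cap\Z^d$ for $1\leq i \leq k$ and setting $x_0=v$, $x_{k+1}=w$, subadditivity of ${\rm T}_J$ gives
\[
{\rm T}_J(v,w)\le \sum_{i=0}^k {\rm T}_{F_i}(x_i,x_{i+1}) \le (k+1)\,c_2 L' \le 2d\,c_2 L'.
\]
Renaming $2d\,c_2$ as $c_2$, the event on the left of \eqref{eq:LDPface} is contained in the union bound event above.

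To absorb the polynomial prefactor $(L+1)^{2(d-1)}$ into the exponential, one checks in each of the regimes $1<r<d-1$, $r=d-1$, and $r>d-1$ that ${\rm g}_{d-1,r}(L,L')$ dominates any constant multiple of $\log L$ uniformly: in the first regime ${\rm g}\ge L^r\ge L$, in the second ${\rm g}\ge L^{d-1}/(1+\log L)^{d-2}$, and in the third ${\rm g}\ge L^{d-1}$. In each case, replacing $c$ by a slightly smaller constant $c'$ yields $(L+1)^{2(d-1)}\exp(-c\,{\rm g})\le \exp(-c'\,{\rm g})$ for $L$ larger than some $L_0$, and the statement is trivial for $L<L_0$ upon shrinking $c$ further. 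The main obstacle is really bookkeeping rather than new probabilistic content: one must justify the chain-of-faces construction uniformly over all connected unions $J$ (including degenerate cases, such as $J$ being a single face), and carry out the polynomial-to-exponent absorption case by case in the three regimes.
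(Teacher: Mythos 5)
Your proposal is correct and follows essentially the same route as the paper. The paper also reduces to the $(d-1)$-dimensional version of Lemma~\ref{prop LDP} on individual faces and uses a union bound with a bounded chain to control ${\rm T}_J(v,w)$; the only cosmetic difference is that the paper extracts its intermediate points $v(0),\dots,v(l)$ from a shortest path in $J$ (with $l\leq 2d$), whereas you construct them as lattice points on pairwise intersections of a chain of at most $2d$ faces, and you spell out the absorption of the $(L+1)^{2(d-1)}$ prefactor into the exponential in each regime of ${\rm g}_{d-1,r}$, which the paper leaves implicit.
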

\begin{proof}
{Let $c_2$ be as in Lemma \ref{prop LDP}}.
We have
\begin{equation*}
\P\left(\text{$\exists v,w\in J$  s.t. ${\rm T}_{J}(v,w)> 2d c_2 L'$}\right) \leq |J|^2 \sup_{v,w\in J}\P\left({\rm T}_{J}(v,w)>2d c_2 L'\right).
\end{equation*}

To bound the supremum above, fix $v,w\in J$. Considering a shortest path from $v$ to $w$ with respect to the graph distance in $J$, one can see that there exist $l\leq 2d$ and $v(0)=v, v(1),\cdots,v(l)=w \in J$ which belong to the shortest path, such that for any $ i\in \Iintv{0, l}$, $v(i)$ and $v(i-1)$ are on the same face of $J$ and for any $ i\in \Iintv{1, l-1}$, $v(i)$ lies on an edge of $J$. Let ${\rm W}_i$ be the face which contains $v(i)$ and $v(i-1)$.  Since {$W_i$} is congruent with $[0,L]^{d-1}$, the estimate \eqref{eq:lemCorInduction}
yields
\begin{equation*}
  \begin{split}
\P\left( {\rm T}_{J}(v,w)> 2d c_2 L'\right) & \leq  \P\left(\exists i \in \Iintv{1,l} \text{ s.t. } {\rm T}_{{\rm W}_i}(v(i-1),v(i))> c_2 L' \right)\\
    &\leq  2d\exp{(-c\,{\rm g}_{d-1,r}(L,L'))},
      \end{split}
\end{equation*}
for some $c=c(d,r)>0$, which shows Lemma \ref{rem:face}.
\end{proof}

The following lemma plays an important role in the proof of Theorem \ref{prop1}. A proof of the lemma is presented \cite[Theorem 1.1]{LW09} in a more general framework, but it can alternatively be proved directly using Chernoff's bound.

\begin{lem}\label{LDP}
   Let $\{X_i\}_{i\in\N}$ be independent and such that $X_i$ has same distribution as $\tau_e$. If $r>1$, then there exist $c,s>0$ such that for $n\in \mathbb N$ and $N\geq s n$, 
\begin{equation} \label{eq:standardLDE}
\P\left(\sum^{n}_{i=1} X_i\geq N \right)\leq{}\exp{\left(-c n^{1-r} N^r \right)}.
\end{equation}
\end{lem}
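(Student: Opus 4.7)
The plan is to apply the exponential Chernoff bound and optimize the tilt: for any $\lambda>0$,
\begin{equation*}
\P\Bigl(\sum_{i=1}^n X_i \geq N\Bigr) \leq e^{-\lambda N}\bigl(\E[e^{\lambda X_1}]\bigr)^n.
\end{equation*}
The key input I will need is an asymptotic upper bound of Weibull--Legendre type on the moment generating function $\phi(\lambda):=\E[e^{\lambda X_1}]$, valid for all sufficiently large $\lambda$.

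To obtain such a bound I would first write, using integration by parts together with the tail assumption \eqref{UB condition},
\begin{equation*}
\phi(\lambda) \;\leq\; 1 + \beta\lambda\int_0^\infty e^{\lambda t - \alpha t^r}\,dt.
\end{equation*}
Since $r>1$, the exponent $\lambda t - \alpha t^r$ is strictly concave and attains its maximum at $t^\star=(\lambda/(\alpha r))^{1/(r-1)}$ with value $A_r\,\lambda^{r/(r-1)}$, where $A_r=(r-1)r^{-r/(r-1)}\alpha^{-1/(r-1)}$. A standard Laplace/splitting argument around $t^\star$ (a Gaussian-type envelope on a window of width of order $(t^\star)^{1-r/2}$ around $t^\star$, plus a crude estimate for $t\geq 2t^\star$ where $\alpha t^r - \lambda t$ grows at least linearly in $\lambda t$) then yields, for some constant $B_r>0$ and some threshold $\lambda_0=\lambda_0(\alpha,\beta,r)$,
\begin{equation*}
\log\phi(\lambda) \;\leq\; B_r\,\lambda^{r/(r-1)}\qquad\text{for every }\lambda\geq \lambda_0.
\end{equation*}

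Substituting this into the Chernoff bound gives, for every $\lambda\geq\lambda_0$,
\begin{equation*}
\log\P\Bigl(\sum_{i=1}^n X_i\geq N\Bigr) \;\leq\; -\lambda N + n B_r\,\lambda^{r/(r-1)},
\end{equation*}
and minimizing the right-hand side in $\lambda$ produces the critical value $\lambda_\star=\bigl(\tfrac{(r-1)N}{r B_r n}\bigr)^{r-1}$, whose substitution yields an upper bound of the form $-c\,N^r/n^{r-1}$ with $c=c(\alpha,r)>0$ explicit; this is exactly the claimed Weibull large-deviation exponent. The admissibility constraint $\lambda_\star\geq\lambda_0$ translates into $N/n\geq s$ for some $s=s(\alpha,\beta,r)$, which is precisely where the hypothesis $N\geq sn$ enters. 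The only nonroutine ingredient is the MGF estimate (essentially the Legendre transform of the rate $\alpha t^r$); the remainder is standard Chernoff optimization, so I do not anticipate any serious obstacle.
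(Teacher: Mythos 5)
Your proposal is correct and follows exactly the route the paper itself indicates: the paper cites \cite[Theorem 1.1]{LW09} but explicitly remarks that the lemma ``can alternatively be proved directly using Chernoff's bound,'' which is precisely the argument you supply (Chernoff bound, a Weibull--Legendre estimate $\log\E[e^{\lambda X_1}]\lesssim \lambda^{r/(r-1)}$ obtained from the tail bound via Laplace's method, then optimization of the tilt $\lambda$, with the constraint $\lambda_\star\geq\lambda_0$ producing the threshold $N\geq sn$). The only minor point worth noting is that for $r$ close to $1$ the exponent $\lambda t-\alpha t^r$ is still positive at $t=2t^\star$; your tail estimate still works since the derivative there is already $\geq(2^{r-1}-1)\lambda>0$, or one can avoid the issue entirely by writing $\lambda t-\alpha t^r\leq\sup_s(\lambda s-\tfrac{\alpha}{2}s^r)-\tfrac{\alpha}{2}t^r$, which gives $\int_0^\infty e^{\lambda t-\alpha t^r}\,dt\leq C_{\alpha,r}\,e^{\tilde A_r\lambda^{r/(r-1)}}$ with no case analysis.
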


From the previous lemma, we can obtain the following refinement of Lemma \ref{lem:Zhang}. Recall $\mu= \mu(\mathbf e_1)$.
\begin{lem} \label{lem:refinedLemma} Suppose $d\geq 2$. If $r>1$, then for any $\e>0$, there exist $K=K(\e)\in\N$ and a positive constant $c=c(\e)$ such that for any $n\in \mathbb N$ and $N \geq n$,
\begin{equation} \label{eq:refinedLemma}
\P\left({\rm T}_{\mathbb R \times[-K,K]^{d-1}}\left(0,n\mathbf{e}_1\right)\geq (\mu+\e)N\right)\leq \exp{\left(-c n^{1-r} N^r \right)}.
\end{equation}
\end{lem}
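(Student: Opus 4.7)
\emph{Plan.} The target rate $\exp(-c n^{1-r} N^r)$ coincides, up to constants, with the large-deviation rate that Lemma~\ref{LDP} gives for a sum of $n$ i.i.d.\ copies of $\tau_e$ exceeding $N$. My plan is therefore to combine Lemma~\ref{lem:Zhang} (which handles the case $N$ comparable to $n$) with the straightforward comparison
\[{\rm T}_{\R \times [-K, K]^{d-1}}(0, n \mathbf{e}_1) \;\leq\; \sum_{i=1}^n \tau_{\la (i-1) \mathbf{e}_1,\, i \mathbf{e}_1 \ra}\]
obtained by using the straight axis path, which handles $N \gg n$. Let $K = K(\e)$ and $c_1 = c_1(\e)$ denote the constants from Lemma~\ref{lem:Zhang}, and let $s = s(r)$ denote the constant from Lemma~\ref{LDP}. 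Fix a threshold $C = C(\e,r) > \max\{1,\, s/(\mu+\e)\}$; the argument then splits into two regimes in $N$.

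First, when $n \leq N \leq C n$, the event $\{{\rm T}_{\R \times [-K, K]^{d-1}}(0, n\mathbf{e}_1) \geq (\mu + \e) N\}$ is contained in $\{{\rm T}_{\R \times [-K, K]^{d-1}}(0, n\mathbf{e}_1) \geq (\mu + \e) n\}$, so Lemma~\ref{lem:Zhang} (using $r\wedge 1 = 1$ since $r > 1$) bounds its probability by $\exp(-c_1 n)$. In this range $n^r \geq N^r / C^r$, hence $c_1 n \geq (c_1/C^r)\, n^{1-r} N^r$, and the bound is of the required form with constant $c_1 / C^r$.

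Second, when $N > C n$, the straight axis path $\gamma_0 = (0, \mathbf{e}_1, 2\mathbf{e}_1, \ldots, n \mathbf{e}_1)$ lies entirely inside the slab $\R \times [-K,K]^{d-1}$, so the restricted passage time is bounded above by $\sum_{i=1}^n \tau_{\la (i-1) \mathbf{e}_1,\, i \mathbf{e}_1 \ra}$, a sum of $n$ i.i.d.\ copies of $\tau_e$. The choice of $C$ guarantees $(\mu+\e) N \geq s n$, so Lemma~\ref{LDP} applies with threshold $(\mu+\e) N$ and yields
\[\P\left(\sum_{i=1}^n \tau_{\la (i-1) \mathbf{e}_1,\, i \mathbf{e}_1 \ra} \geq (\mu + \e) N\right) \;\leq\; \exp\!\left(-c\,(\mu+\e)^r\, n^{1-r} N^r\right),\]
again of the desired form. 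Taking $c(\e,r)$ to be the minimum of the two regime constants finishes the proof.

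I do not foresee any genuine obstacle: the only bookkeeping is picking the threshold $C$ so that the two regime-specific constants merge into a single $c(\e,r)$. Conceptually, the argument also illustrates that for $r>1$ the main contribution to the upper tail in the slab comes from a concentration of large weights along a nearly straight axis path, rather than from the geometric slab constraint that Lemma~\ref{lem:Zhang} enforces.
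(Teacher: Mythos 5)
Your proof is correct and follows essentially the same two-case split as the paper: for $N$ comparable to $n$ (i.e.\ $N/n$ bounded) you invoke Lemma~\ref{lem:Zhang} and absorb the bounded ratio $N^r/n^r$ into the constant, and for $N$ large you dominate the restricted passage time by a sum of $n$ i.i.d.\ copies of $\tau_e$ and apply Lemma~\ref{LDP}. The only cosmetic difference is that the paper splits at $(\mu+\e)N \gtrless sn$ rather than at $N \gtrless Cn$, which is the same threshold up to a constant.
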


\begin{proof}
Let $\{X_i\}$ and $s>0$ be from Lemma \ref{LDP}. First suppose that $(\mu+\e)N \geq s n$. The LHS of \eqref{eq:refinedLemma} is bounded from above by
\[
\P\left(\sum^{n}_{i=1} X_i\geq (\mu+\e)N \right)\leq{}\exp{\left(-c (\mu+\varepsilon)^r  n^{1-r} N^r \right)}.
\]
On the other hand, when $(\mu + \e)N < s n$, since $N\geq n$, the LHS of \eqref{eq:refinedLemma} is bounded from above by 
\al{
 \P\left({\rm T}_{\R\times[-K,K]^{d-1}}\left(0,n\mathbf{e}_1\right)\geq (\mu+\e)n\right)&\leq \exp{\left(-c n\right)}\\
  %=  \exp{\left(-c n \frac{N^r} {n^r} \frac{n^r}{N^r}\right)} 
   &\leq \exp{\left(-c' n^{1-r} N^r \right)},
  }
  for some $c'=c'(\mu,s,\e)>0$, where we have
taken $K=K(\e)$ and $c=c(\e)$ from Lemma \ref{lem:Zhang} in the first inequality.  Putting things together, the two estimates give \eqref{eq:refinedLemma}.
\end{proof}

\begin{proof}[Proof of Theorem \ref{prop1}]
  We proceed by induction on dimension $d$, for a fixed $r>1$.

  We start with $d=1$.  In this case, it is enough to show that for $(X_i)$ independent and distributed as $\tau_e$, for all $\e>0$, there exists $c=c(\e)$ such that for all $N\geq n$,
\begin{equation} \label{eq:dequals1}
\P\left(\sum_{i=1}^n X_i \geq (\mu + \e)N\right) \leq \exp{\left(-cn^{1-r} N^r\right)},
\end{equation}
where $\mu = \E[\tau_e]$ in this case. We can use similar arguments to the proof of Lemma \ref{lem:refinedLemma}. Let $s$ be from Lemma \ref{LDP}. If $(\mu+\e)N \geq s n$, then
\[
\P\left(\sum^{n}_{i=1} X_i\geq (\mu+\e)N \right)\leq{}\exp{\left(-c (\mu+\varepsilon)^r  n^{1-r} N^r \right)}.
\]
On the other hand, when $(\mu + \e)N < s n$ and since $N\geq n$, the LHS of \eqref{eq:dequals1} is bounded from above by 
\al{
  \P\left(\sum_{i=1}^n X_i \geq (\mu+\e)n\right)&\leq \exp{\left(-c n\right)}\\
  &=  \exp{\left(-c n \frac{N^r} {n^r} \frac{n^r}{N^r}\right)} \leq \exp{\left(-c' n^{1-r} N^r \right)},
  }
for some $c=c(\e)$ coming from standard large deviation estimates for i.i.d. random variables, and some $c'=c'(\e,s,\mu)>0$. The last two estimates entail \eqref{eq:dequals1}.

We now suppose Theorem \ref{prop1} holds for $d-1\geq 1$ and we show that it also holds for $d$.
\iffalse
For all $\delta >0 $ and $n\in \mathbb N$, we use the notation
\begin{equation} \label{eq:def_ldelta}
\ell_\delta(n)= \ell_{d,r,\delta}(n)=
\begin{cases}
\left\lceil \delta n^{(r-1)/(d-1)}\right\rceil&\text{ if }1<r<d,\\
\left\lceil  \frac{\delta n}{1+\log{n} }\right\rceil&\text{ if }r=d,\\
\left\lceil \delta n\right\rceil&\text{ if } r>d.
\end{cases}
\end{equation}
\fi
{  Recall $\ell_{\delta}(n)$ for $\delta>0$ from \eqref{eq:defellIntro}.} We always assume that $\delta$ is small enough so that $2 \ell_{\delta}(n)\leq n$.
 Given $v\in\Z^{d-1}$, we write
\[v^{[1]}_{n}=(0,v)\text{ and }v^{[2]}_{n}=(n,v).\]
%For all $v\in {\rm B}_{K,M,n}$,we have $v^{[1]}_n\in D_{M\ell_{d,r}(n)}(0)$ and $v^{[2]}_n\in D_{M\ell_{d,r}(n)}(n \mathbf{e}_1)$.
Let ${\rm B}_{K,\delta,n}^+=3K\Z^{d-1}\cap [1, \ell_{\delta}(n)]^{d-1}$ and $\mathbb S_v = \mathbb{R}\times(v+[-K,K]^{d-1})$ for $v\in {\rm B}_{K,\delta,n}^+$. 
We first consider the event:
\[B_1=\left\{\exists v \in {\rm B}_{K,\delta,n}^+,{\rm T}_{\mathbb S_v}\left(v^{[1]}_{n},v^{[2]}_{n}\right) \leq (\mu + \varepsilon)N\right\}.\]
For $\varepsilon > 0$, let $K=K(\varepsilon)$ as in Lemma~\ref{lem:refinedLemma}. We have $|{\rm B}_{K,\delta,n}^+| \geq  \left(\frac{\ell_{\delta}(n)}{6K}\right)^{d-1}$, so by the independence structure and translation invariance, it follows { from Lemma~\ref{lem:refinedLemma}} that
\begin{equation} \label{eq:tunnels_L1}
\begin{aligned}
  \P({}^cB_1)& \leq \exp{\left(-c' K^{-d+1} \left(\ell_{\delta}(n) \right)^{d-1} n^{1-r} N^r\right)}\\
 & \leq \exp{\left(- c' K^{-d+1} \delta^{d-1} {\rm g}_{d,r}(n,N)\right)},
\end{aligned}
\end{equation}
with some $c'>0$. Now for $1\leq i \leq 2\ell_\delta(n)$, let
\[
J^{[1]}_i=\partial D_{i}(0)\cap [0,n]^{d},
\]
which is the union of the faces of the cube $[0,i]^d$ that do not contain $0$.
Let also 
$$J^{[2]}_i =\partial D_{i}(n\mathbf{e}_1)\cap  [0,n]^{d},$$
which is the translation of $J^{[1]}_i$ for which $n \mathbf e_1$ is a corner of the cube. (See also Figure \ref{figure}). 
\begin{figure}[h]
    \centering
   \includegraphics[width=1\textwidth]{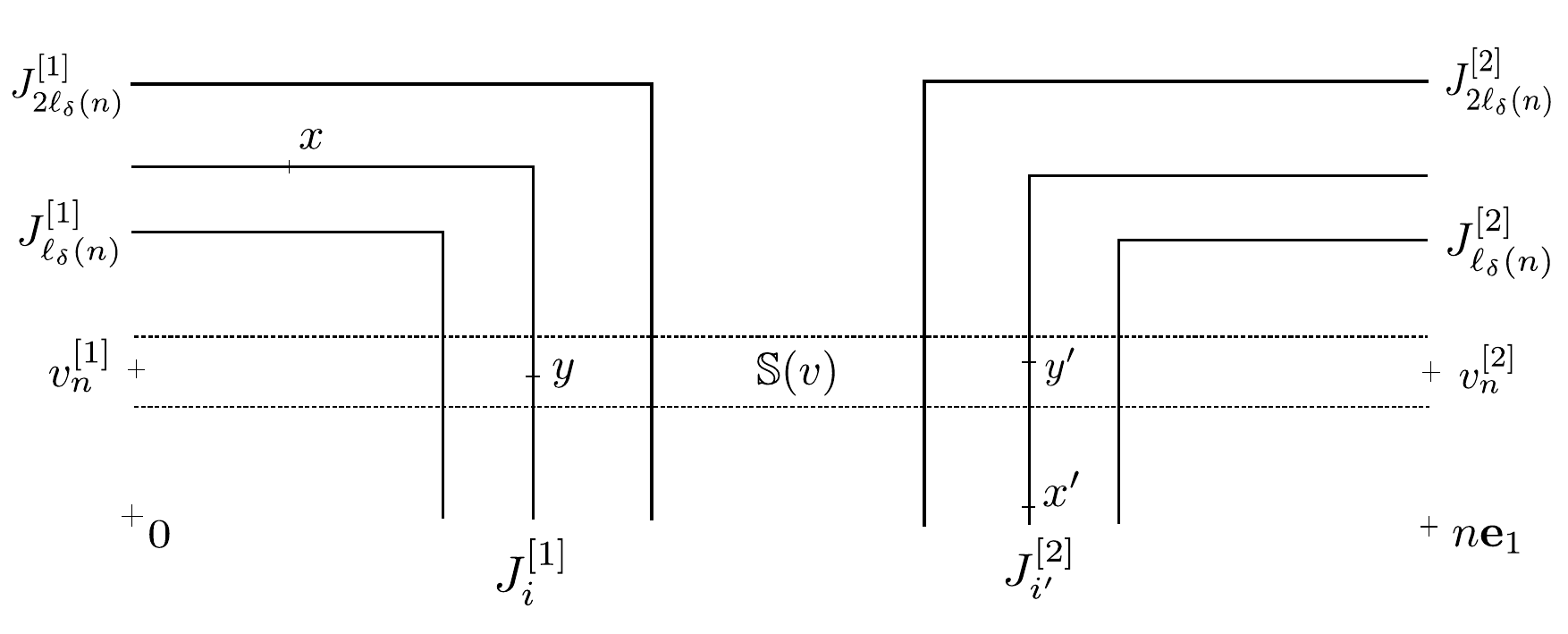}
    \caption{}
    \label{figure}
\end{figure} We then consider the events:
\[A_1^{[1]}= \left\{\exists i \in \Iintv{\ell_\delta(n)+1, 2\ell_\delta(n)} : \forall x,y\in J^{[1]}_{i}, {\rm T}_{J^{[1]}_{i}}(x,y) \leq \e N\right\},
\]
%{($\mathbb N$ or $\mathbb Z_+$)}
and $A_1^{[2]}$ defined similarly with $J^{[1]}_i$ replaced by $J^{[2]}_i$.

%Let $\gamma_i=\{(x,y)\in L_i^2:~ \Vert x-y \Vert_1 = 1\}$. 
%
%\begin{align*}
%\P(A^c) & \leq \prod_{i=1}^{\lceil M\ell_{d,r}(n)\rceil} \P\left(\sum_{e\in \gamma_i} \tau_e > \varepsilon n\right)\\
%& \leq \prod_{i=1}^{\lceil M\ell_{d,r}(n)\rceil} |\gamma_i| \P\left(\tau_e > \frac{\varepsilon n}{\gamma_i}\right)\\
%& \leq \prod_{i=1}^{\lceil M\ell_{d,r}(n)\rceil } 2M \ell_{d,r}(n)  e^{-\alpha \left(\frac{\varepsilon n^{2-r}}{M}\right)^r}\\
%& \leq e^{-c M \ell_{d,r}(n) \left(\frac{\varepsilon n^{2-r}}{M}\right)^r} \leq e^{-c \varepsilon^r M^{1-r} n},
%\end{align*}
%with $c=c(\varepsilon,\alpha,r)$, for $n\geq n_0(M,r,\alpha,\varepsilon)$. Pb: not good (we need $n^r$ and $\to 0$ as $M\to\infty$.)
%\begin{equation}
%\begin{aligned}
%\P(A^c) & \leq \prod_{i=1}^{\lceil M\ell_{d,r}(n)\rceil} \P\left(\sum_{e\in \gamma_i} \tau_e > \varepsilon n\right)\\
%& \leq \prod_{i=1}^{\lceil M\ell_{d,r}(n)\rceil} e^{-\varepsilon n}\prod_{e\in  \gamma_i}\E\left[ e^{\tau_e}\right]\\
%& \leq \prod_{i=1}^{\lceil M\ell_{d,r}(n)\rceil } e^{\beta_1 M\ell_{d,r}(n)-\varepsilon n} \leq e^{- \frac{\varepsilon}{4} M n\ell_{d,r}(n)}
%%e^{\beta_1 2M^2n^{2(r-1)} - \frac{\varepsilon}{2} M n^r},
%\end{aligned}
%\end{equation}
We now estimate $\P\left({}^cA_1^{[1]}\right)$. 
%{[begining of test]}
%For any $x,y\in L_i$, let $\gamma$ be any path from $x$ to $y$ such that $|\gamma| \leq 2dM \ell_{d,r}(n)$ (there is at least one such path). 
%
%By our assumption, there is $\delta>0$ such that $q:=\log\E[e^{\delta \tau_e^r}] <\infty$. By Theorem 1.1 in \cite{LW09}, there exists $c>0$ such that
%\begin{equation} \label{eq:largeDevLiu}
%\P\left(\sum_{e\in \gamma} \tau_e > \varepsilon n\right) = \P\left(|\gamma|^{-1} \sum_{e\in \gamma} \tau_e > \frac{\varepsilon n}{|\gamma|}\right) \leq e^{-c|\gamma| \left(\frac{\varepsilon n}{|\gamma|}\right)^r},
%\end{equation}
%so that since $\P\left({\rm T}_{L_i}(x,y) > \varepsilon n\right) \leq \P\left(\sum_{e\in \gamma} \tau_e > \varepsilon n\right)$, we find using independence and the union bound that for $n\geq N(M,\beta_1,d)$,
%\begin{equation}
%\begin{aligned}
%\P(A^c) & \leq \prod_{i=1}^{\lceil M\ell_{d,r}(n)\rceil} \sum_{x,y\in L_i} \P\left({\rm T}_{L_i}(x,y) > \varepsilon n\right) \leq e^{-\frac{c\varepsilon^r}{2} M^{2-r} \ell_{d,r}(n)^{2-r} n^r}.
%\end{aligned}
%\end{equation}
%{this works when $r < 2$. There is no hope that it works for $r>2$ since \eqref{eq:largeDevLiu} is also a lower bound (choose all $\tau_e > \varepsilon n/|\gamma|$) [end of test]}
%{the following paragraph has been changed from here --}
By the induction hypothesis, Lemma \ref{prop LDP} and Lemma \ref{rem:face} with $L=i$ and $J=J_i^{[1]}$, there exists $c_2=c_2(d,r),c=c(d,r)>0$ such that \eqref{eq:LDPface} holds for $L'\geq i$. We choose $L'=2 \lceil \e N c_2^{-1}\rceil$ and let $\delta<\e c_2^{-1}$ so that $L'\geq i$ (use that $i\leq 2 \ell_\delta(n) \leq 2 \lceil \delta n\rceil\leq 2 \lceil\delta N\rceil $), in order to obtain that
\begin{align*}
\P\left(\exists x,y\in J^{[1]}_{i}, {\rm T}_{J^{[1]}_{i}}(x,y) > \e N \right) & \leq  \exp{\left(- c\,{\rm g}_{d-1.r}(i,L')\right)}\\
& \leq  \exp{\left(- c_{\e,r,d}\,{\rm g}_{d-1.r}(2\ell_{\delta}(n),N)\right)},
\end{align*}
for some constant $c_{\e,r,d}>0$ {and for $N\geq N_0(\varepsilon)$ which ensures that $L'\leq 4\varepsilon N c_2^{-1}$}. 
Since the $J^{[1]}_i$'s do not intersect, we further obtain  by independence that
\begin{equation}  \label{eq:polyFactorL1}
\P\left({}^cA_1^{[1]}\right) \leq  \exp\left\{ - c_{\e,r,d} \,\ell_{\delta}(n) {\rm g}_{d-1.r}(2\ell_{\delta}(n), N)\right\}, 
\end{equation}
where
\begin{align}
\ell_{\delta}(n) {\rm g}_{d-1.r}(2\ell_{\delta}(n),N) \geq \delta  \times \begin{cases}
 n^{\frac{r-1}{d-1}}  N^r & \text{if } 1<r<d-1,\\
n^{\frac{r-1}{d-1}} \frac{N^r}{(1+\log n )^{d-2}} & \text{if } r=d-1,\\
 4^{d-1-r}n^{\frac{r-1}{d-1}}  n^{\frac{(d-1-r)(r-1)}{d-1}} N^r & \text{if } d-1<r<d,\\
 \frac{ n^{d-r}}{1+\log n}  N^r  & \text{if } r=d,\\
n^{d-r}  N^r& \text{if } r>d,
 \end{cases}
\end{align}
where we have used that $2\ell_\delta(n)\leq n$ for $\delta$ small enough in the second, fourth {and fifth} cases, {and that $2\ell_\delta(n) \leq 4n^{\frac{r-1}{d-1}}$ in the third case}. %{\\-- to here. Please check it.}   %up to substituting $M_1''$ by $2M_1''$ to absorb the first factor in the RHS of \eqref{eq:polyFactorL1}, 
We obtain in any case that there is some $c_{d,\delta,\e}>0$, such that for $N$ large enough and any $n\leq N$,
\begin{equation} \label{bound_PAcL1}
\P\left({}^cA^{[1]}_1\right) \leq \exp{\left(- c_{\e,d,\delta} \,  {\rm g}_{d,r}(n,N)\right)}.
\end{equation}
 %Note that $2(r-1)<r$ by assumption. (Case $d> 2$?)

 { For any $u>0$,} let $D^+_u(z)=D_u(z)\cap [0,n]^d$ and $D^+_u=D^+_u(0)$. We observe the following: suppose that $A^{[1]}_1\cap A_1^{[2]} \cap B_1$ holds and let $i,i',v$ satisfy the conditions in $A^{[1]}_1,A_1^{[2]},B_1$. Consider $x\in J^{[1]}_i$ that belongs to a path that minimizes 
\[{\rm T}^{[1]}_{n}:={\rm T}_{D^+_{2\ell_\delta(n)}}\left(0, J^{[1]}_{2\ell_\delta(n)}\right),\] and $x' \in J^{[2]}_{i'}$ that belongs to a path minimizing
\[{\rm T}^{[2]}_{n}:={\rm T}_{D_{2\ell_\delta(n)}^+(n\mathbf e_1)} \left(n\mathbf{e}_1,J^{[2]}_{2\ell_\delta(n)} \right).\]
%Let also $y\in J^{[1]}_{i} \cap \mathbb S_v$ and $y'\in J^{[2]}_{i'} \cap \mathbb S_v$ that belong to a path that minimizes ${\rm T}_{\mathbb S_v}\big(v^{[1]}_{n},v^{[2]}_{n}\big)$ -- note that there are always such $y$ and $y'$ since $v\in {\rm B}_{K,\delta,n}^+\subset [1,\ell_\delta(n)]^{d-1}$ -- such that
%$${\rm T}_{[0,n]^d}(y,y')\leq {\rm T}_{\mathbb S_v}\big(y,y'\big)\leq  {\rm T}_{\mathbb S_v}\big(v^{[1]}_{n},v^{[2]}_{n}\big).$$
 
 Let $\gamma$ be a path that minimizes $T_{\mathbb S_v}\big(v^{[1]}_{n},v^{[2]}_{n}\big)$. Since $v\in {\rm B}_{K,\delta,n}^+\subset [0,\ell_\delta(n)]^{d-1}$, there exists $y\in J^{[1]}_{i} \cap \mathbb S_v$ and $y'\in J^{[2]}_{i'} \cap \mathbb S_v$ such that there is a sub-path of $\gamma$ that we call $\tilde{\gamma} = (\gamma_{i_0},\dots,\gamma_{j_0})$, that goes from $\gamma_{i_0}=y$ to $\gamma_{j_0}=y'$, and which is included in $[0,n]^d$ (see Figure \ref{figure}). In particular, such a path satisfies
 \[
{\mathrm T_{[0,n]^d}(y,y') \leq \sum_{e\in \tilde{\gamma}} \tau_e \leq \mathrm T_{\mathbb S_v}\big(v^{[1]}_{n},v^{[2]}_{n}\big) \leq (\mu + \varepsilon) N.
 }
 \]
By the triangular inequality, we obtain that
\begin{align*}
&{\rm T}_{[0,n]^d}(0, n \mathbf{e}_1)\\
 & \leq  {\rm T}_{[0,n]^d}(0,x) + {\rm T}_{[0,n]^d}(x,y) + {\rm T}_{[0,n]^d}(y, y') + {\rm T}_{[0,n]^d}(y',x') + {\rm T}_{[0,n]^d} (x',n\mathbf{e}_1)\\
&\leq {\rm T}^{[1]}_{n} + \e N + (\mu + \e) N + \e N + {\rm T}^{[2]}_{n}.
\end{align*}
Therefore, on $A^{[1]}_1\cap A_1^{[2]}\cap B_1$, the event $\{{\rm T}_{[0,n]^d}(0,n \mathbf{e}_1) > (\mu+\xi) N\}$ implies that
\[
 {\rm T}^{[1]}_{n}  +  {\rm T}^{[2]}_{n}> (\xi - 3\e)N,
\]
and thus, 
\begin{equation} \label{eq:sumIndependentL1}
\begin{aligned}
& \P({\rm T}_{[0,n]^d}(0,n \mathbf{e}_1)  >(\mu + \xi) N) \\
&\leq \P\left({}^cA^{[1]}_1\right) + \P\left({}^cA^{[2]}_1\right) + \P({}^cB_1) 
+ \P\left( {\rm T}^{[1]}_n + {\rm T}^{[2]}_n  \geq (\xi-3\e) N\right)\\
&\leq 2\P\left({}^cA^{[1]}_1\right) + \P({}^c B_1) 
+2\P\left( {\rm T}^{[1]}_{n}  \geq \frac{\xi-3\e}{2} N\right).
\end{aligned}
\end{equation}

We now focus on estimating $\P\left( {\rm T}^{[1]}_{n}\geq u \right)$ for all $u>0$. 
 For ${ k}\in \mathbb N$, let 
   $E^+_{k}$ be the set of all edges that have both ends in $D^+_k=D_k(0)\cap [0,n]^d$. %Denoting by $(t_e)_{e\in E^{[1]}_{n,M}}$ any family of variables $t_e\in \mathbb R_+$ indexed by $E^{[1]}_{n,M}$, 
We define
\[H_u^+=\left\{(t_e)_{e\in E^+_{2\ell_\delta(n)}} \in (\mathbb R_+)^{E^+_{2\ell_\delta(n)}}:~\forall \gamma:0\to  J^{[1]}_{2\ell_\delta(n)}\text{ with }\gamma\subset D^+_{2\ell_\delta (n)},\,\sum_{e\in \gamma} t_e \geq u\right\}.
\]
 Let $Y$ be a random variable with density $\alpha e^{-\alpha t}\mathbf{1}_{t\geq 0}$ and $X=(Y+\rho)^{1       /r}$. Then $X$ has density $e^{\alpha \rho} \alpha rt^{r-1}e^{-\alpha t^r}$ and for $\rho$ large enough,
\[\P(\tau_e > t) \leq \beta e^{-\alpha t^r} \leq e^{\alpha \rho} e^{-\alpha t^r} =  \P(X \geq t),\]
so that $\tau_e$ is stochastically dominated by $X$. Since $H_u^+$ is an increasing event with respect to the $t_e$'s, we obtain that
\begin{align}
&\P\left(%{\rm T}_{D^{[1]}_{n,M}}\left(0,\partial D^{[1]}_{n,M}\right)
{\rm T}^{[1]}_{n} \geq u\right)\nonumber\\
 &\leq \int_{H_u^+} \prod_{e\in  E^+_{2\ell_\delta(n)}} e^{\alpha \rho} \alpha r t_e^{r-1} e^{-\alpha t_e^r} \mathrm d t_e \nonumber\\
&\leq \left( \int \prod_{e\in  E^+_{2\ell_\delta(n)}} e^{\alpha \rho} \alpha r t_e^{r-1} e^{-\alpha \varepsilon t_e^r} \mathrm d t_e \right) e^{-\alpha(1-\varepsilon)\inf_{(t_e)\in H_u^+}\left\{\sum_{e \in E^+_{2\ell_\delta(n)}} t_e^r \right\}}  .\label{twofactors}
 \end{align}
The first factor on the last line writes
\al{ 
  \left(\int_{\mathbb R_+} e^{\alpha \rho} \alpha r t_e^{r-1} e^{-\alpha\varepsilon t^r} {\rm d} t\right)^{|E^+_{2\ell_\delta(n)}|}& \leq \exp\left(c'_{\varepsilon,d} |E^+_{2\ell_\delta(n)}|\right)\\
  &\leq \exp\left(c_{\varepsilon,d} \ell_\delta(n)^d\right),
  }
where $c_{\varepsilon,d},c'_{\varepsilon,d}>0$. By relying on Theorem~\ref{thm: path formalization}, the second factor in \eqref{twofactors} writes $e^{-\alpha(1-\varepsilon) u^r \lambda^+_{d,r}(2\ell_\delta(n))}$, where $\lambda^+_{d,r}(k)=\lambda_{d,r}(D_k^+;J_k^{[1]})$ (recall the notation from \eqref{general lambda}). We thus find that
\begin{equation} \label{eq:boundLaplace}
\begin{aligned}
&\P\left( {\rm T}^{[1]}_{n}    \geq \frac{(\xi-3\e)}{2}N\right)\\
 &  \leq  \exp{\left(c_{\varepsilon,d} \ell_{\delta}(n)^d-\alpha(1-\varepsilon) \left(\frac{(\xi-3\e)N} 2\right)^r  \lambda^+_{d,r}(2\ell_{\delta}(n))\right)}.
 \end{aligned}
\end{equation}

{In order to obtain a lower bound on $\lambda^+_{d,r}(k)$, we use the following inequality involving $\lambda_{d,r}(k)$. We can then rely on Theorem~\ref{asymptotic for lambda general} which gives the order of magnitude of the latter quantity.}
\begin{lem} \label{lem: tilda lambda and lambda}
  For any $d\geq 2$ and $r>0$,
  \begin{equation}\label{tilda lambda and lambda}
    \lambda^+_{d,r}(k) \geq 2^{-d} \lambda_{d,r}(k).
    \end{equation}
\end{lem}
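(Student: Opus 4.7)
\textbf{Plan for the proof of Lemma \ref{lem: tilda lambda and lambda}.} The strategy is to symmetrize an (near-)optimizer for $\lambda^+_{d,r}(k)$ by reflections across the coordinate hyperplanes, producing an admissible function for $\lambda_{d,r}(k)$ whose $r$-energy is at most $2^d$ times larger. More precisely, fix $\eta>0$ and pick $f^+:D_k^+\to\mathbb R$ with $f^+(0)=0$, $f^+(x)\geq 1$ for $x\in J_k^{[1]}$, and $\sum_{\langle u,v\rangle\in E_{D_k^+}}|f^+(u)-f^+(v)|^r\leq \lambda^+_{d,r}(k)+\eta$. For $x\in D_k(0)$ set $|x|=(|x_1|,\ldots,|x_d|)$; since $k\leq n$ in the relevant regime, $|x|\in D_k^+$. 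Define $f:D_k(0)\to\mathbb R$ by $f(x):=f^+(|x|)$.

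First I would check that $f$ is admissible for $\lambda_{d,r}(k)$. Clearly $f(0)=f^+(0)=0$. If $x\in\partial D_k(0)$ then $|x|_\infty=k$, so $|x|\in\partial D_k(0)\cap[0,\infty)^d\subset J_k^{[1]}$ (using $k\leq n$), and hence $f(x)=f^+(|x|)\geq 1$.

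The main step is to control the energy. Given an edge $\langle x,y\rangle\in E_{D_k(0)}$ (so $x,y$ differ by $\pm 1$ in exactly one coordinate $i$), a short case-check shows that $\bigl||x_i|-|y_i|\bigr|=1$ while $|x_j|=|y_j|$ for $j\neq i$, so that $\langle|x|,|y|\rangle\in E_{D_k^+}$. Thus reflection yields a well-defined map $\Phi:\{x,y\}\mapsto\{|x|,|y|\}$ from $E_{D_k(0)}$ to $E_{D_k^+}$. To count $|\Phi^{-1}(\{u,v\})|$ for a fixed edge $v=u+\mathbf e_i$ in $E_{D_k^+}$: for each $j\neq i$ the sign of $x_j=y_j$ is free if $u_j>0$ and forced to $0$ if $u_j=0$, giving a factor $2^{\#\{j\neq i:\,u_j>0\}}$; for coordinate $i$ a direct verification (distinguishing $u_i=0$ and $u_i\geq 1$) shows there are exactly two valid assignments for the ordered pair $(x_i,y_i)$ with $|x_i|=u_i$, $|y_i|=v_i$ and $|x_i-y_i|=1$. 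Undoing the order, one finds
\begin{equation*}
|\Phi^{-1}(\{u,v\})|=2^{1+\#\{j\neq i:\,u_j>0\}}\leq 2^d.
\end{equation*}

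Consequently,
\begin{equation*}
\sum_{\langle x,y\rangle\in E_{D_k(0)}}|f(x)-f(y)|^r
=\sum_{\langle u,v\rangle\in E_{D_k^+}}|\Phi^{-1}(\{u,v\})|\cdot|f^+(u)-f^+(v)|^r
\leq 2^d\bigl(\lambda^+_{d,r}(k)+\eta\bigr).
\end{equation*}
Since $f$ is admissible for $\lambda_{d,r}(k)$, taking the infimum and letting $\eta\to 0$ gives $\lambda_{d,r}(k)\leq 2^d\lambda^+_{d,r}(k)$, which is \eqref{tilda lambda and lambda}. The only mildly delicate point is the fiber count for $\Phi$, but the symmetry of the reflection makes this routine rather than a genuine obstacle.
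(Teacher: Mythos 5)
Your proof is correct and takes essentially the same route as the paper: fold a near-optimizer for $\lambda^+_{d,r}(k)$ by $f(x)=f^+(|x_1|,\ldots,|x_d|)$, verify admissibility for $\lambda_{d,r}(k)$, and bound the energy by a factor $2^d$. You spell out the fiber count for the edge map $\Phi$, which the paper simply asserts.
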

\begin{proof}
  Given a function $f^+:D_k^+\to \R$ with $f^+(0)=0$ and $f(x)\geq 1$ for $x\in J_k^{[1]}$, we define $f:D_k(0)\to \R$ such that
  $$f(x_1,\cdots,x_d)=f^+(|x_1|,\cdots,|x_d|).$$
  Then for any $x\in \partial D_k(0)$, $f(x)\geq 1$ and $f(0)=0$. Moreover,
  \al{
    \sum_{\langle x,y\rangle \in E_k^+} |f(x)-f(y)|^r\geq 2^{-d}    \sum_{\langle x,y\rangle \in E_k} |f(x)-f(y)|^r.
  }
  Hence, we have \eqref{tilda lambda and lambda}.
  \end{proof}
% \textcolor{blue}{Is this obvious that $\lambda_M$ does not go to $0$ as $M\to\infty$?} \textcolor{red}{Good question. We can prove that it is positive by note on LDP.pdf. We will discuss it later.} 
From \eqref{eq:boundLaplace}, Theorem~\ref{asymptotic for lambda general} and Lemma~\ref{lem: tilda lambda and lambda}, we find that there exists $c_{\e,d}{,c_{d,r}}>0$ such that for $N$ large enough,
\al{
  &\qquad \P\left( {\rm T}^{[1]}_{n} \geq{ \frac{(\xi-3\e)}{2}}N\right)\\
  &\leq 
\begin{cases}
 \exp{\left( c_{\varepsilon,d} \delta^d n^{d \frac{r-1}{d-1}}-\alpha(1 - \varepsilon) ((\xi-3\e)/2)^r N^r c_{d,r}\right)} & \text{ if } r<d,\\
  \exp{\left( c_{\varepsilon,d} \frac{\delta^dn^{d}}{(1+\log n )^{d}}-\alpha(1 - \varepsilon) ((\xi-3\e)/2)^r N^d  \frac{c_{d,r}}{(1+\log \ell_\delta(n)  )^{d-1}}\right)} & \text{ if } r=d,\\
 \exp{\left(c_{\varepsilon,d} \delta^d n^{d}-\alpha(1 - \varepsilon) ((\xi-3\e)/2)^r N^r c_{d,r} {\lceil\delta n \rceil ^{d-r}}\right)} & \text{ if } r>d.
\end{cases}
}
%the asymptotics of $\lambda^+_{d,r}$ [REF] to obtain that
%\[\inf_{x\geq 1} \left. \begin{cases}
% \lambda^+_{d,r} (x)  & \text{ if } r<d,\\
% \lambda^+_{d,r}(x) (\log x + 1)^{1-d}  & \text{ if } r=d,\\
% \lambda^+_{d,r}(x) x^{d-r} & \text{ if } r>d
%\end{cases} \right\} >c_{d,r}.
%\]
Using that $n\leq N$, that  $d\frac{r-1}{d-1}< r$ in the case $r<d$ and that $\log \lceil \delta n\rceil \leq \log n $ in the case $r=d$, we finally obtain that for $\delta$ and $\e$ small enough (which are now fixed), 
\begin{equation}  \label{eq:Laplace_L1}
\P\left( {\rm T}^{[1]}_{n}  \geq { \frac{(\xi-3\e)}{2}} N\right) \leq \exp{\left(-c_{d,\xi} {\rm g}_{d,r}(n,N)\right)},
\end{equation}
for some $c_{d,\xi}>0$.

Thanks to \eqref{eq:sumIndependentL1}, \eqref{bound_PAcL1}, \eqref{eq:tunnels_L1} and \eqref{eq:Laplace_L1}, we can conclude that there exists $c=c(\xi,r,d)>0$ such that for $N$ large enough and all $n\leq N$,
\[\P\left({\rm T}_{[0,n]^d}(0,{n}\mathbf{e}_1)> (\mu(\mathbf{e}_1)+\xi) N\right) \le \exp{(-c\,{\rm g}_{d,r}(n,N))}.
\]
To finally obtain \eqref{eq:thInduction} (which holds for any $N\geq n$), we observe that for all $N_0>0$,
\[\max_{ N\leq N_0}\max_{n\leq N}\P\left({\rm T}_{[0,n]^d}(0,{n}\mathbf{e}_1)> (\mu(\mathbf{e}_1)+\xi) N\right)<1,\]
so up to reducing the constant $c$, \eqref{eq:thInduction} holds for all $N\geq n$.
\end{proof}

\subsection{Proof of the upper bounds in Theorem \ref{th:rLessThandBig1} and Theorem \ref{th:requalsd} (case $1<r\leq d$)} \label{subsec:proofUpperBound}
The proof is very similar to the one of Theorem \ref{prop1}, the main difference being that we will use Theorem \ref{prop1} as an input to obtain estimates on large deviations in dimension $d-1$. In particular, we do not need to follow an induction argument here.
Recall $\ell_M(n)=\ell_{d,r,M}(n)$ from \eqref{eq:defellIntro}. In the following, we will take $M$ big, but we always assume that $n$ is large enough (depending maybe on $M$) so that $2\ell_{M}(n)\leq n$. Let 
\[{\rm B}_{K,M,n}=3K\Z^{d-1}\cap [-\ell_{M}(n), \ell_{M}(n)]^{d-1}.\]
 Given $v\in\Z^{d-1}$, we write
\[v^{[1]}_n=(0,v)\text{ and }v^{[2]}_n=(n,v).\]
For all $v\in {\rm B}_{K,M,n}$, we have $v^{[1]}_n\in D_{\ell_{M}(n)}(0)$ and $v^{[2]}_n\in D_{\ell_{M}(n)}(n \mathbf{e}_1)$. First consider the event:
\[B_2=\left\{\exists v \in {\rm B}_{K,M,n},{\rm T}_{\mathbb S_v}(v^{[1]}_n,v^{[2]}_n) \leq (\mu + \varepsilon)n\right\},\]
where $\mathbb S_v = \mathbb{R}\times(v+[-K,K]^{d-1})$. 
For $\varepsilon > 0$, let $K=K(\varepsilon)$ as in Lemma~\ref{lem:Zhang} and let $M=M(K,\xi,a,\alpha)>K$ that will be determined later. We have $|{\rm B}_{K,M,n}| \geq  \left(\frac{\ell_{M}(n)}{3K}\right)^{d-1}$, so by the independence structure and translation invariance, it follows { from Lemma~\ref{lem:Zhang}} that  with $c=c(\varepsilon)$,
\begin{equation} \label{eq:tunnels}
\P({}^cB_2) \leq \exp{\left(-cn \left(\frac{\ell_{M}(n)}{3K}\right)^{d-1}\right)}.
\end{equation}
 Let us introduce two additional events of interest. Let 
\[I_i^{[1]} = %\{x\in \mathbb Z^d, \Vert x \Vert_{\infty }=   i
\partial D_i(0),\  I_i^{[2]} = \partial D_i(n\mathbf{e}_1),\quad  0\leq i \leq 2\ell_{M}(n), \] 
%and $I_i^{[2]} = \{x\in \mathbb Z^d, \Vert x - n\mathbf{e}_1\Vert_{\infty} =  i\}$, $0\leq i \leq 2\ell_{M}(n)$,
 which are boundaries of cubes of length $ i$. (See also Figure \ref{figure2}). 
\begin{figure}[h]
    \centering
    \includegraphics[width=1\textwidth]{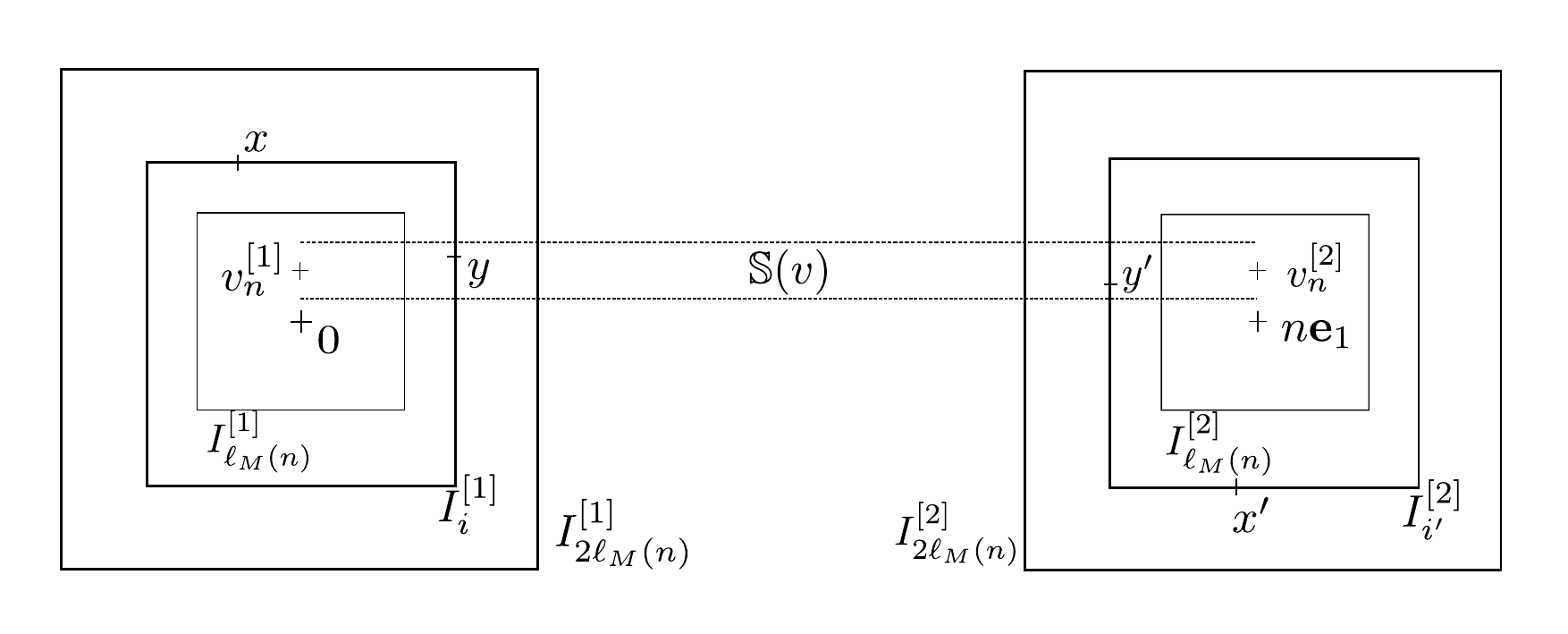}
    \caption{}
    \label{figure2}
\end{figure}  We consider the events:
\[A_2^{[1]}= \left\{\exists i \in \Iintv{ \ell_{M}(n)+1,2 \ell_{M}(n)} : \forall x,y\in I_{i}^{[1]}, {\rm T}_{I_{i}^{[1]}}(x,y) \leq \varepsilon n\right\},
\]
and $A^{[2]}_2$ defined similarly with $I_i^{[1]}$ replaced by $I_i^{[2]}$.

We now estimate $\P({}^cA_2^{[1]})=\P({}^cA_2^{[2]})$.  By Lemma~\ref{rem:face} {(whose assumption is verified thanks to Theorem \ref{prop1})}, there exists a positive constants $C_{\varepsilon,d,r}$ such that
%for $n>N_{\varepsilon,d,M}$,
\begin{equation*}
\P\left(\exists x,y\in I_{i}^{[1]}, {\rm T}_{I_{i}^{[1]}}(x,y) > \varepsilon n\right) \leq \exp{\left(-{\rm g}_{d-1,r}(2\ell_M(n),n) C_{\varepsilon,d,r}\right)}.
\end{equation*}
By independence, we further obtain that
\begin{align} 
\P({}^cA_2^{[1]}) & \leq  \exp{\left(-{\rm g}_{d-1,r}(2\ell_M(n),n) C_{\varepsilon,d,r}\ell_{M}(n)\right)} \nonumber\\
&\leq C_{d,M} \times \begin{cases}
 \exp{\left( -C_{\varepsilon,d,r}' M n^{r}\right)} & \text{if } r<d,\\
 \exp{\left( -C_{\varepsilon,d,r}' M \frac{n^{d}}{(\log n)^{d-1}}\right)} & \text{if } r=d,
 \end{cases}
  \label{eq:boundPAc}
\end{align}
for $n$ large enough and some  constants $C_{\varepsilon,d,r}',C_{d,M}>0$, and where we have used in the last inequality that when $r\in [d-1,d)$ and $d\geq 2$, we have $\frac{r-1}{d-1} + d-1 \geq r$ by standard computations.

 %Note that $2(r-1)<r$ by assumption. (Case $d> 2$?)
% Let $D^{[1]}_{n,M}=D_{2M\ell_{d,r}(n)}(0)$ and
  % $E_{k}$ be the set of all edges in $D_k(0)$.
  % We also write $D^{[2]}_{n,M}=D_{2M\ell_M(n)}(n\mathbf{e}_1)$. 
  We now make the following observation: suppose that $A_2^{[1]}\cap A_2^{[2]} \cap B_2$ holds and let $i,i',v$ satisfy the conditions in $A_2^{[1]},A_2^{[2]},B_2$. Consider $x\in I_i^{[1]}$ (resp.\ $x' \in I_{i'}^{[2]}$) that belongs to a path that minimizes 
\[{\rm T}^{[1]}_n:={\rm T}_{D_{2\ell_M(n)}(0)}\left(0,I_{2\ell_M(n)}^{[1]}\right)\] respectively\ 
\[{\rm T}^{[2]}_n:={\rm T}_{D_{2\ell_M(n)}(n\mathbf{e}_1)}\left(n\mathbf{e}_1,I_{2\ell_M(n)}^{[2]}\right).\] Let also $y\in I_i^{[1]} \cap \mathbb S_v$ and $y'\in I_{i'}^{[2]} \cap \mathbb S_v$. By the triangular inequality,
\begin{align*}
{\rm T}(0,n \mathbf{e}_1) & \leq  {\rm T}(0,x) + {\rm T}(x,y) + {\rm T}(y, y') + {\rm T}(y',x') + {\rm T} (x',n\mathbf{e}_1)\\
&\leq {\rm T}^{[1]}_n + \varepsilon n + (\mu+\varepsilon)n + \varepsilon n + {\rm T}^{[2]}_n.
\end{align*}
Therefore, on $A_2^{[1]}\cap A_2^{[2]}\cap B_2$, the event $\{{\rm T}(0,n \mathbf{e}_1) > (\mu+\xi)n\}$ implies that
\begin{equation}\label{eq:underA1A2B2}
 {\rm T}^{[1]}_n  +  {\rm T}^{[2]}_n> (\xi-3\varepsilon)n.
\end{equation}
Thus,
\begin{equation} \label{eq:sumIndependent}
\begin{aligned}
& \P({\rm T}_n  > (\mu + \xi)n) \\
&\leq \P({}^cA_2^{[1]}) + \P({}^cA_2^{[2]}) + \P({}^cB_2) 
+ \P\left( {\rm T}^{[1]}_n + {\rm T}^{[2]}_n  \geq (\xi - 3\varepsilon) n\right)\\
&\leq 2\P({}^cA_2^{[1]}) + \P({}^cB_2) 
+\sum_{k=0}^{\lf(\xi - 3\varepsilon) n\rf} \P\left( {\rm T}^{[1]}_n\geq k \right) \P({\rm T}^{[2]}_n  \geq (\xi - 3\varepsilon) n-k-1),
\end{aligned}
\end{equation}
 where in the last inequality we have used independence of ${\rm T}^{[1]}_n$ and ${\rm T}^{[2]}_n$, the union bound and that $X+Y\geq x$ implies $X\geq \lfloor X\rfloor$ and $Y\geq x- \lfloor X\rfloor-1$.

Similarly to \eqref{eq:boundLaplace}, we obtain that
\begin{equation*} 
\qquad \P\left( {\rm T}^{[1]}_{n}    \geq k \right)
   \leq  \exp{\left(c_{\varepsilon,d} \ell_{M}(n)^d\right)} \exp{\left(-\alpha(1-\varepsilon) k^r  {\lambda}_{d,r}(2\ell_{M}(n))\right)},
\end{equation*}
where $\lambda_{d,r}$ is defined in \eqref{def:lambda}. The same estimate holds for ${{\rm T}}^{[2]}_n$.
% \textcolor{blue}{Is this obvious that $\lambda_M$ does not go to $0$ as $M\to\infty$?} \textcolor{red}{Good question. We can prove that it is positive by note on LDP.pdf. We will discuss it later.} 
Therefore the last sum in \eqref{eq:sumIndependent} is bounded from above by
\begin{align}
&e^{2c_{\varepsilon,d}  \ell_{M}(n)^d} \sum_{k=0}^{\lf(\xi - 3\varepsilon) n\rf} e^{-\alpha(1-\varepsilon) k^r    {\lambda}_{d,r}(2\ell_{M}(n))}e^{-\alpha(1-\varepsilon) ((\xi - 3\varepsilon)n-k)^r \lambda_{d,r}(2\ell_M(n))} \nonumber\\
& \leq \xi n \, \exp{\left(2c_{\varepsilon,d}  \ell_{M}(n)^d-\alpha(1-\varepsilon) 2\left(\frac{\xi - 3\varepsilon}{2}\right)^r n^r \lambda_{d,r}(2\ell_M(n))\right)}, \label{eq:Laplace}
\end{align}
where we have used that $(s\xi)^r + ((1-s)\xi)^r\geq 2 (\xi/2)^r$. 

%We can now conclude by putting together \eqref{eq:tunnels}, \eqref{eq:boundPAc} and \eqref{eq:sumIndependent}-\eqref{eq:Laplace}.
We observe that  by
% \eqref{eq:sumIndependent} and
 \eqref{eq:tunnels} and for $M$ large enough, $\P({}^cB_2)$ is negligible with respect to $\P({\rm T}_n  > (\mu + \xi)n)$ because of \eqref{lower bound for d<r} and \eqref{lower bound for d=r}. Then, when $r<d$, we find by \eqref{eq:boundPAc}
%choosing $M>0$ big enough and then letting $n\to\infty$ followed by $\varepsilon \to 0$
that $\P({}^c A_2^{[1]})$ can be made smaller than $e^{-Cn^r}$ for all $C>0$ by increasing $M$. Finally, since $\ell_M(n)^d = o(n^r)$, we can neglect the first exponential term in \eqref{eq:Laplace} and obtain the upper bound in Theorem \ref{th:rLessThandBig1} from \eqref{eq:sumIndependent}, \eqref{eq:Laplace} and Theorem \ref{asymptotic for lambda general}. When $r=d$, $\P({}^c A_2^{[1]})=o(e^{-C n^d/(\log n)^{d-1}})$ for {any given} $C>0$ {if we choose $M$ large enough}. The upper bound in Theorem \ref{th:requalsd} follows from \eqref{eq:Laplace} {(note that by Theorem \ref{th:upcoming} we can replace $\lambda_{d,d}(2\ell_M(n))$ by $\lambda_{d,d}(n)$ since both are equivalent)}, where we can neglect the first exponential term since $n^{-d} (\log n)^{d-1} \ell_M(n)^d \to 0$ and by Theorem \ref{asymptotic for lambda general}.

\begin{remark}  \label{rem: general x}
We have seen the proofs for $x=\mathbf{e}_1$.  We remark here how to prove the main results for general $x$. For the lower bound, the proof is exactly the same. For the upper bound, the proof is the same as before except for Lemma~\ref{lem:Zhang}. Then we replace Lemma~\ref{lem:Zhang} by the following:
\begin{lem}
  {
    Given $x\in \R^d\backslash \{0\}$, let
    $$\S^x(K)=\{y\in\R^d:~\text{$\exists t\in\R$ s.t. }|y-tx|_{\infty}\leq K\}.$$
     For any $\e>0$, there exist $K\in\N$ and $c>0$ such that for any $n\in \N$,
\[\P\left({\rm T}_{\S^x(K)}\left(0,nx\right)\geq (\mu+\e)n\right)\leq\exp{(-cn^{r\land 1})}.
\]}
  \end{lem}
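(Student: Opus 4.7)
The lemma is the direction-$x$ analogue of Lemma~\ref{lem:Zhang}, and is used in Remark~\ref{rem: general x} to reduce the upper bound argument for general $x$ to the one for $\mathbf e_1$. (I read the displayed probability as $\P(\mathrm T_{\S_v^x(K)}(\lfloor 0\rfloor,\lfloor nx\rfloor)\ge (\mu(x)+\e)n)$, correcting apparent typos in the endpoint and the time constant; the proof strategy below is robust to these.) By translation invariance we may set $v=0$; write $\mathsf T_n := \mathrm T_{\S_0^x(K)}(0,\lfloor nx\rfloor)$.

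The plan has three steps. \emph{Step 1 (Kingman inside the tube).} Using the spatial shift $\theta$, one has the super-additivity $\mathsf T_{n+m}\le \mathsf T_n + \mathsf T_m\circ \theta_{\lfloor nx\rfloor}$, and the exponential moment bound coming from \eqref{UB condition} gives $\E\mathsf T_1<\infty$. Kingman's subadditive ergodic theorem therefore yields a constant $\mu^K(x)\ge \mu(x)$ with $\mathsf T_n/n\to \mu^K(x)$ almost surely and in $L^1$. \emph{Step 2 (Continuity in $K$).} I claim $\mu^K(x)\searrow \mu(x)$ as $K\to\infty$. Since $\bigcup_K \S_0^x(K)=\R^d$, any near-optimal path for the unrestricted model eventually sits in some $\S_0^x(K)$; combined with a crude transversal bound of geodesics (which is $o(n)$ in probability, and is all that is needed — no quantitative rate is required here), subadditivity and averaging give $\mu^K(x) \le \mu(x)+o_K(1)$. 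Fix henceforth $K$ so that $\mu^K(x)<\mu(x)+\e/2$.

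\emph{Step 3 (Concentration by blocks).} Partition $\{0,\ldots,n\}$ into $m$ intervals with endpoints $p_i=\lfloor (i/m) nx\rfloor$, and shift $\S_0^x(K)$ to obtain disjoint sub-tubes $\S^{(i)}$ of axial length $n/m$. Subadditivity gives $\mathsf T_n \le \sum_{i=1}^m \mathsf T^{(i)}$, with $\mathsf T^{(i)}$ i.i.d.\ and, for $n/m$ large, $\E[\mathsf T^{(i)}]\le (\mu(x)+3\e/4)(n/m)$. It remains to bound $\P(\sum_i \mathsf T^{(i)}\ge (\mu(x)+\e)n)$. For $r>1$, the weights $\tau_e$ have all exponential moments, hence so does $\mathsf T^{(i)}$ (via a straightforward greedy-path comparison inside the sub-tube of bounded cross-section), and Chernoff gives the rate $n$ with $m$ chosen of constant or logarithmic order. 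For $r\le 1$, one uses the concavity inequality $(\sum s_i)^r\le \sum s_i^r$ (as in Lemma~\ref{exp:est}), the finiteness of $\E e^{\beta \tau_e^r}$ for $\beta<\alpha$, and a direct Markov-inequality computation on $\mathsf T^{(i)}$ (again comparing to a path of bounded length in the slab cross-section) to produce the rate $n^r$. The two regimes together yield the claimed $\exp(-cn^{r\wedge 1})$ bound.

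The main obstacle is the convergence $\mu^K(x)\to \mu(x)$ in Step~2 when $x$ is not a lattice direction, since quantitative transversal-fluctuation control in arbitrary directions is delicate. The saving grace is that only qualitative convergence in $K$ (uniform in $n$) is needed: $K$ is chosen depending on $\e$ and $x$ but not $n$, so the concentration in Step~3 then runs with $K$ fixed and $n\to\infty$, producing the desired stretched-exponential decay. A secondary technical point is ensuring $\E\mathsf T_1<\infty$ and tight tails of $\mathsf T^{(i)}$ under \eqref{UB condition}, which one handles by comparing to a deterministic short path in the cross-section and using either exponential moments of $\tau_e$ (for $r>1$) or of $\tau_e^r$ (for $r\le 1$).
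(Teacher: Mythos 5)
The overall three-step skeleton (subadditive LLN inside the tube, continuity in $K$, block decomposition with concentration) matches the strategy of the paper's Lemma~\ref{lem:Zhang}, and you are right that the displayed endpoint and time constant contain typos that should read $\lfloor nx\rfloor$ and $\mu(x)$. But your Step~2 and Step~3 both deviate from the paper in ways that create real gaps.

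In Step~2 you invoke ``a crude transversal bound of geodesics,'' i.e.\ that the unrestricted geodesic from $0$ to $\lfloor nx\rfloor$ has transversal fluctuation $o(n)$. This is not needed and is a substantially heavier tool than the paper uses: in Lemma~\ref{lem:Zhang} the tube width is fixed \emph{before} looking at long paths, by combining \eqref{kingman} (to pick a block length $K$ with $\rm{T}(0,Ke_1)\le \mu(1+\e/2)K$ with probability $\ge 1-s$) with the $\Omega$-by-$\Omega$ convergence $\rm{T}_{[-M,M]^d}(0,Ke_1)\to\rm{T}(0,Ke_1)$ a.s.\ as $M\to\infty$, producing \eqref{LLN:box} for a \emph{fixed, finite} $K$ and $M$. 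No control of geodesics at scale $n$ enters.

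In Step~3 there are two genuine problems. First, you take ``$m$ of constant or logarithmic order,'' so each sub-tube has axial length $\Theta(n)$; but then the concentration of a single block $\mathsf T^{(i)}$ around its mean at the scale $\e n/m$ is \emph{exactly the statement you are proving}, so the argument is circular. The paper instead keeps the block axial length $K$ fixed, producing $\Theta(n)$ blocks of constant size, and applies a one-dimensional LDE to the resulting sum. Second, for $r<1$ a ``direct Markov-inequality computation'' cannot produce the rate $n^r$ once the sum has $\Theta(n)$ terms: using $(\sum X_i)^r\le\sum X_i^r$ and Markov with $e^{\beta(\cdot)^r}$ gives $\P(\sum_{i=1}^m X_i>\e n)\le e^{-\beta(\e n)^r}\bigl(\E e^{\beta X_1^r}\bigr)^m$, and the factor $(\E e^{\beta X_1^r})^m=e^{\Theta(n)}$ destroys the bound when $r<1$. (This is precisely why Lemma~\ref{exp:est} has $n_0=n_0(k,c)$ depending on the \emph{fixed} number of summands $k$.) The paper handles this with a truncation at level $(\mu+\e/2)K$, making the truncated summands have mean $<\e/4$ by Cauchy–Schwarz and the choice $s=\e^2/(16\,\E\tau_e^2)$, and then invokes the stretched-exponential LDE from~\cite[(4.2)]{GRR} (and Cram\'er when $r\ge1$). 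You would need an analogue of this truncation and a one-big-jump-type input to close the $r<1$ case.

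Finally, for general $x$ the lattice shift $\theta_{\lfloor nx\rfloor}$ does not map $\S_0^x(K)$ to itself (only into $\S_0^x(K+1)$), so your subadditivity inequality $\mathsf T_{n+m}\le\mathsf T_n+\mathsf T_m\circ\theta_{\lfloor nx\rfloor}$ needs a small correction (e.g.\ thickening the tube by $1$ after each shift, or working with blocks as the paper does at a fixed lattice-aligned scale and absorbing the fractional part of $nx$). This is a routine technicality but worth noting since Step~1 as stated is not literally true.
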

  The proof of the lemma is essentially the same as in Lemma~\ref{lem:Zhang} and we safely leave it for readers. 
  \end{remark}

%\subsection{LDP for $r=d$}
%We suppose $r=d=2$. We define 
%$$\lambda_{d,d}(n)= \inf\left\{\sum_e t_e^d:~\forall \gamma:0\to D_0(n)^c,\,\sum_{e\in\gamma}t_e\geq 1\right\}.$$
%Later, we will show that $(\log{n})^{d-1} \lambda_{d,d}(n)$ is bounded and bounded away from zero and regular varying. \textcolor{red}{(I believe that the limit exists, but so far I don't know how to prove this. Anyway, let's finish to prove the following first.)}
%\begin{theorem}
%\begin{align}
%    \lim_{n\to\infty}\frac{1}{\lambda_{d,d}(n)\,n^d}\log{\P({\rm T}(0,n\mathbf{e}_1 )>(\mu+\xi)n)}=\frac{\xi^d}{2^{d-1}}.
%\end{align}
%\end{theorem}
% We define 
%$$\lambda_{d,d}(n,M)=\inf\left\{\sum_e t_e^d:~\forall \gamma:0\to D_0(M \ell_{d,r}(n))^c,\,\sum_{e\in\gamma}t_e\geq 1\right\}.$$
%\begin{lem}
%For any $M>0$,
%$$\frac{\lambda_{d,d}(n)}{\lambda_{d,d}(n,M)}\to 1.$$
%\end{lem}
\section{Proof of Theorem~\ref{asymptotic for lambda general}}\label{section: sketch}
 We write $D_M=D_M(0)$ and $\partial D_M=\partial D_M(0)$ for simplicity of notation. 
\subsection{Upper bound}
We first prove that $\kappa_{d,r}$ is bounded. 

For $r<d$, we consider $f(0)=0$ and $f(x)=1$ for $x\neq 0$. Then, $f(x)\geq 1$ for $x\in\partial D_n$ and 
$$\sum_{\langle x,y\rangle\in E_n}|f(x)-f(y)|^r=2d.$$ 
Hence $\kappa_{d,r}$ is less than or equal to $2d$, in particular bounded.

Next we consider $r=d$. Let us set $f(x)=\frac{\log{(|x|_1+1)}}{\log{n}}$. Then for $x\in\partial D_n$, $f(x)\geq 1$ and $f(0)=0$. Moreover, for $x\in D_n$, 
\al{
|f(x+\mathbf{e}_i)-f(x)|&\leq \frac{1}{\log{n}}(\log{(|x|_1+2)}-\log{|x|_1})\\
&\leq \frac{C}{ (|x|_1+1) \log{n}},
}
with some constant $C>0$ independent of $x$ and $n$. Hence, using $r=d$, we have
\al{
\sum_{x,y\in E_n} |f(x)-f(y)|^r&\leq 2d C^r (\log{n})^{-r} \sum_{x\in D_n}  (|x|_1+1)^{-r}\\
&\leq C' (\log{n})^{-r} \sum_{\ell=1}^{2dn}  \ell^{d-1}\ell^{-r}\\
&\leq C'' (\log{n})^{-(d-1)},
}
  with some constants $C',C''>0$, which proves the upper bound.

Finally, we consider $r>d$. Let us set $f(x)=n^{-1} |x|_1$ for $x\neq 0$ and $f(0)=0$. Then for $x\in\partial D_n$, $f(x)\geq 1$. Moreover, for $x\in D_n$,
\al{
|f(x+\mathbf{e}_i)-f(x)|\leq  n^{-1},
}  
with some $C>0$. Hence,
\al{
\sum_{x,y\in E_n} |f(x)-f(y)|^r&\leq \frac{2d}{n^{r}} |D_n|\\
&\leq C' n^{d-r},
}
  with some constant $C'>0$, which proves the upper bound.

\subsection{Lower bound} \label{subsec:lowerBoundPath}
We prove that $\kappa_{d,r}$ is bounded away from $0$. We use $\lambda_{d,r}(n)=\lambda^{\rm P}_{d,r}(n):=\lambda^{\rm P}_{d,r}(D_n;\partial D_n)$ from Theorem~\ref{thm: path formalization}.  Given an edge $e=\langle x,y\rangle\in E(\Z^d)$, we write $|e|_1=|x|_1\lor |y|_1$.

We first suppose that $r>1$. Given $x\in \partial D_n$, we consider the line $L_x$ connecting $0$ and $x$. Then we take a path $\gamma_x$ from $0$ to $x$ to be the nearest neighbor closest to $L_x$ with a deterministic rule braking ties. The choice of $\gamma_x$ is not so important, but the following lemma is crucial, and is straightforward to check.
  \begin{lem}\label{lem: path probab}
    Let us consider a uniform random variable $X$ on $\partial D_n$ and let $\mu_n$ be its measure. Then, there exists $C>0$ independent of $n$ such that  
    $${p_e:=}\mu_n(e\in \gamma_X) \leq
    \begin{cases}
           C |e|^{-(d-1)}_1&\text{ for any $e\in E_n$ },\\
           0& \text{ for any $e\notin E_n$ }.
           \end{cases}$$
  \end{lem}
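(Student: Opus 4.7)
The plan is to exploit the geometric meaning of $\gamma_X$ as a lattice approximation of the Euclidean segment $L_X$ from $0$ to $X$, and to reduce the bound to a solid-angle estimate. I will split the argument into three steps.

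First, I would verify the vanishing case $e\notin E_n$. By an induction on the length of $\gamma_x$, one shows that every vertex of $\gamma_x$ lies in $D_n$: indeed, at each step we are at a vertex $v\in D_n$ and move to whichever of the $2d$ neighbors minimizes the Euclidean distance to $L_x$, and since $L_x\subset D_n$, one of the neighbors inside $D_n$ is always at least as close to $L_x$ as any neighbor outside (the tie-breaking rule can be chosen to stay inside). Hence $\gamma_x\subset D_n$, i.e.\ $\gamma_x\subset E_n$, and the second line of the claim follows.

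Second, the core geometric fact: for $e=\langle v,v+\mathbf{e}_i\rangle$ with $|e|_1=k$, if $e\in\gamma_X$ then $L_X$ must pass within bounded Euclidean distance of $e$. This is a direct consequence of the construction rule: whenever the path uses edge $e$, the midpoint of $e$ is the closest neighbor to $L_X$ among the $2d$ candidates at one endpoint of $e$, so $L_X$ lies within distance $C_d\le \sqrt{d}$ of $v$ (say). Thus
\[
\{e\in \gamma_X\}\subset\{X\in\partial D_n:\mathrm{dist}(L_X,v)\le C_d\}.
\]

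Third, a solid-angle count. The set of $x\in\R^d$ whose ray from $0$ passes within distance $C_d$ of $v$ is a cone with apex $0$, axis along $v$, and aperture of order $C_d/|v|_2\asymp 1/k$. Its intersection with $\partial D_n$, a piecewise-linear sphere of "radius" $n$, has $(d-1)$-dimensional surface measure bounded by $C'_d\,n^{d-1}(1/k)^{d-1}$, and correspondingly the number of lattice points it contains is at most $C''_d(n/k)^{d-1}$ (for $k$ bounded below by a fixed constant; the case of bounded $k$ is handled by absorbing everything into the constant $C$). Since $|\partial D_n|\asymp n^{d-1}$, dividing gives
\[
\mu_n(e\in\gamma_X)\le \frac{C''_d(n/k)^{d-1}}{|\partial D_n|}\le C\,k^{-(d-1)}=C\,|e|_1^{-(d-1)},
\]
which is the desired bound.

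The main obstacle is step two: one must make the informal phrase "nearest neighbor closest to $L_x$" precise enough (including the tie-breaking rule) to ensure both that the path is well defined and that the geometric containment in step two truly holds with a dimension-dependent constant. Once that is in place, step three is an elementary volume comparison and step one is an easy induction; the constants are absorbed into $C$ in the final bound.
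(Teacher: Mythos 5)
The paper does not write out a proof of this lemma; it is asserted to be ``straightforward to check,'' and your argument supplies a correct proof via the natural solid-angle estimate. The two ingredients you use are (i) the construction of $\gamma_x$ should guarantee $\gamma_x\subset\{y:\mathrm{dist}(y,L_x)\leq C_d\}$ for a dimension-dependent constant $C_d$, which simultaneously gives $\gamma_x\subset D_n$ (hence the vanishing case $e\notin E_n$) and the containment $\{e\in\gamma_X\}\subset\{X:\mathrm{dist}(L_X,v)\leq C_d\}$; and (ii) the radial shadow on $\partial D_n$ of the Euclidean ball of radius $C_d$ around $v$ contains at most $C_d'(n/|e|_1)^{d-1}$ lattice points, which after dividing by $|\partial D_n|\asymp n^{d-1}$ gives the bound. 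Both steps are sound. Two points worth tightening. In step two, the phrase \emph{the midpoint of $e$ is the closest neighbor} is not quite what you want (the midpoint is not a lattice point); what you actually invoke is just the uniform containment in (i), and that is the cleanest formulation. And since $\partial D_n$ is an $\ell^\infty$-sphere rather than a round one, the lattice-point count in step three formally requires the easy remark that radial projection from $\partial D_n$ to the Euclidean sphere of radius $n$ is bi-Lipschitz with constants depending only on $d$, so the solid-angle estimate transfers; when $|e|_1\asymp n$ the shadow still holds only $O_d(1)$ lattice points, giving $\mu_n(e\in\gamma_X)\leq C n^{-(d-1)}\leq C'|e|_1^{-(d-1)}$. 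Your closing flag about pinning down the definition of $\gamma_x$ so that (i) holds is the right concern to raise; this imprecision is present in the paper as well, and any standard digitized-line construction resolves it.
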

  
  Suppose $(t_e)_{e\in E_n}$ satisfies that for any $\gamma:0\to \partial D_n$, $\sum_{e\in \gamma} t_e\geq 1$. %Let us write $p_e=\mu_n(e\in \gamma_X)$.  
  Then, 
  \al{
    1\leq \mu_n\left[ \sum_{e\in \gamma_X} t_e  \right]= \sum_{e\in E(\Z^d)}p_e t_e.
  }
  Hence, for any $\lambda\geq 0$,
  \al{
    \sum_{e\in E_n}t_e^r&\geq  \sum_{e\in E_n}t_e^r -\lambda \left(\sum_{e\in E_n}p_e t_e-1\right)\\
    &\geq \inf_{(t_e)\in \R^{E_n}_+}\left\{ \sum_{e\in E_n}t_e^r -\lambda \left(\sum_{e\in E_n}p_e t_e-1\right) \right\}.
    }
  This means that
  $$\lambda^{\rm P}_{d,r}(n)\geq \sup_{\lambda\geq 0} \inf_{(t_e)\in \R^{E_n}_+}\left\{ \sum_{e\in E_n}t_e^r -\lambda \left(\sum_{e\in E_n}p_e t_e-1\right) \right\}. $$
  Fixing $\lambda\geq 0$, we  have
  \aln{
    \inf_{(t_e)\in \R^{E_n}_+}\left\{ \sum_{e\in E_n}t_e^r -\lambda \left(\sum_{e\in E_n}p_e t_e-1\right) \right\}&\geq   \left(\sum_{e\in E_n} \inf_{t_e\in \R_+}\left\{t_e^r -\lambda p_e t_e\right\}\right) +\lambda\notag\\
    &= -C_n \frac{r-1}{r^{\frac{r}{r-1}}} \lambda^{\frac{r}{r-1}}+\lambda, \label{Saddle}
  }
  where $C_n= \sum_{e\in E_n} p_e^{\frac{r}{r-1}}$ and we have used the saddle point method in the last line. Putting $\lambda=r\,C_n^{1-r}$ into \eqref{Saddle}, we conclude
\begin{equation} \label{eq:LowerBoundLambdaP}
    \lambda_{d,r}(n)=\lambda^{\rm P}_{d,r}(n)\geq C_n^{1-r}.
\end{equation}
  By Lemma~\ref{lem: path probab}, it is straightforward to check that
  \al{
    C_n\leq
    \begin{cases}
      C_{d,r}&\text{ if $1<r<d$},\\
      C_{d,r} \log{n} &\text{ if $r=d$},\\
      C_{d,r} n^{\frac{r-d}{r-1}} &\text{ if $r>d$},
      \end{cases}
  }
%  {(I think it is $n^{\frac{r-d}{r-1}}$ when $r>d$)}
  with some constant $C_{d,r}>0.$ This proves the theorem for $r>1$.

 { For $r\leq 1$, since $r\rightarrow \lambda_{d,r}(n)$ is non-increasing, we get the result.}

  \section{Proof of Theorem \ref{th:rLessThandBig1Loc}}
   We restrict { ourselves} to the case $x=\mathbf e_1$, although the same idea applies to any $x\in { \R^d\backslash\{0\}}$.
We begin with \eqref{eq:Noloc}.  { Suppose $r\geq d$.} Let $a>0$, $\varepsilon_0>0$. There exists $c=c(a,\varepsilon_0)>0$ such that for $n$ large enough,
  \[\P\left(\exists e\in E_{n^a}, \tau_e> \varepsilon_0 n  \right) \leq e^{-c n^r}.\]
When $r>d$, by \eqref{eq:lowerBoundr>d}, there exists $C=C_{\xi}>0$ such that for $n$ large enough, $$\P({\rm T}(0,n\mathbf e_1) > (\mu + \xi)n)\geq e^{-C n^{d}},$$ and when $r=d$, by Theorem \ref{th:requalsd} and Theorem \ref{asymptotic for lambda general}, $$\P({\rm T}(0,n\mathbf e_1) > (\mu + \xi)n)\geq e^{-C \frac{n^{d}}{(\log n)^{d-1}}}.$$   Estimate \eqref{eq:Noloc} follows directly in both cases.
  
We turn to \eqref{eq:loc}. { Suppose $r< d$}. We repeat the main steps of the proof exposed in Section \ref{subsec:proofUpperBound} and we use its notations. {We observe that \eqref{eq:loc} follows if we can show that $Q_n^i \to 0,i=1,2$ where
\begin{equation*} \label{eq:needToShow}
Q_n^i =  \frac{\P\left(\forall e\in E_R, \tau_e\leq \varepsilon_0 n\, , \, {\rm T}^{[i]}_{n}\geq \frac{(\xi-3\e)n}{2} \right)}{\P\left(\mathrm T_n  \geq (\mu+\xi) n\right)}.
\end{equation*}
Indeed, by \eqref{eq:underA1A2B2}, the left-hand side of \eqref{eq:loc} is smaller than
\[
Q_n^1+Q_n^2 + \frac{\P\Big({}^cA_2^{[1]}\Big) + \P\Big({}^cA_2^{[2]}\Big) + \P({}^cB_2) }{\P\left(\mathrm T_n  \geq (\mu+\xi) n\right)},
\]
where the second term vanishes as $n\to\infty$, for $M$ chosen large enough, by the arguments given in the paragraph above Remark \ref{rem: general x}. We will now show that $Q_n^1\to 0$. (note that $Q_n^2 \to 0$ can be treated similarly).
}
 We write $D_M=D_M(0)$ and denote by $E_{R+1,n}$ the set of edges in the annulus 
$D_n\setminus D_R$, i.e.\ $E_{R+1,n} = E_n \setminus E_R$.
Now suppose that for all $ e\in E_R$ we have $\tau_e\leq \varepsilon_0 n$, then for all $x\in \partial D_R$, 
\begin{align*}
{\rm T}^{[1]}_{n} & \leq {\rm T}_{D_R}(0,x) + {\rm T}_{E_{R+1,{2\ell_M(n)}}}(x,\partial D_{{2\ell_M(n)}})\\
& \leq 2dR{ \varepsilon_0} n + {\rm T}_{E_{R+1,{2\ell_M(n)}}}(x,\partial D_{{2\ell_M(n)}}),
\end{align*}
so if we define ${\rm T}_{{A}}(F,G) = \inf_{x\in F,y\,\in G} {\rm T}_{ A}(x,y)$, then we obtain
\[{\rm T}^{[1]}_{n} \leq 2dR\varepsilon_0 n + {\rm T}_{E_{R+1,{2\ell_M(n)}}}(\partial D_R,\partial D_{{2\ell_M(n)}}).
\]
Therefore,
\begin{align}
& \P\left(\forall e\in E_R, \tau_e\leq \varepsilon_0 n\, , \, {\rm T}^{[1]}_{n}\geq \frac{(\xi-3\e)n}{2} \right) \nonumber\\
& \leq \P\left( {\rm T}_{E_{R+1,{2\ell_M(n)}}}(\partial D_R,\partial D_n) \geq \frac{(\xi-3\e-4dR\varepsilon_0)n}{2} \right) \nonumber\\
& \leq e^{c_{\varepsilon,d} \ell_{M}(n)^d} e^{-(\alpha-\varepsilon) \left(\frac{(\xi-3\e-4dR\varepsilon_0)n}{2}\right)^r  {\lambda}_{d,r}^P(E_{R+1,{2\ell_M(n)}};\partial D_R,\partial D_{{2\ell_M(n)}})}, \label{eq:upperBoundDn}
\end{align}
by a similar computation to \eqref{eq:boundLaplace}, where: 
\begin{align*}
&\lambda_{d,r}^{\rm P}(E_{R+1,n};\partial D_R,\partial D_n)\\
& = \inf_{(t_e)\in (\mathbb R_+)^{E_{R+1,n}}} \left\{\sum_{e\in E_{R+1,n}} t_e^{r}\ \middle|~\forall \gamma:\partial D_R\to \partial D_n \text{ with $\gamma\subset E_{R+1,n}$},\,\sum_{e\in\gamma}t_e\geq 1\right\}.
\end{align*}
 Following the argument { around \eqref{eq:LowerBoundLambdaP}} in Section \ref{subsec:lowerBoundPath}, we find that
\begin{equation*}
 { \lambda_{d,r}^{\rm P}(E_{R+1,n};\partial D_R,\partial D_n)} \geq C_{R+1,n}^{1-r}, \quad C_{R+1,n}= \sum_{e\in E_{R+1,n}} p_e^{\frac{r}{r-1}},
\end{equation*}
where $p_e$ is defined in Lemma \ref{lem: path probab}. When $r<d$, we have $C_{R+1,n}\leq c_{d,r} R^{\frac{d-r}{1-r}}$ so that
\[\lim_{R\to\infty} \inf_{{n >  R}}  { \lambda_{d,r}^{\rm P}(E_{R+1,n};\partial D_R,\partial D_n)} \to \infty.\]
Recall $\lambda_{d,r}$ from {\eqref{eq:def_lambdaLimit} which is finite by Theorem \ref{asymptotic for lambda general}}.  We can thus fix $R$ large enough so that
\[{\liminf_{n\to\infty} \lambda_{d,r}^{\rm P}(E_{R+1,n};\partial D_R,\partial D_{2\ell_M(n)})} > {2 \lambda_{d,r}},\] %for all $n\geq 1$,
 and hence \eqref{eq:upperBoundDn} entails %\eqref{eq:needToShow} 
{that $Q_n^1\to 0$}
 by Theorem \ref{th:rLessThandBig1}, by choosing $\varepsilon$ and $\varepsilon_0$ small enough.

%%%%%%%%%%   APPENDIX
\appendix
\section{Proof of Lemma~\ref{lem:Zhang}} \label{appendixA}
In this section, we show Lemma~\ref{lem:Zhang}: for any $\e>0$, there exist $M=M(\e)\in\N$ and a positive constant $c=c(\e)$ such that for any $n\in\N$,
  $$\P\left({\rm T}_{\R\times[-M,M]^{d-1}}\left(0,n\mathbf{e}_1\right)\geq (\mu+\e)n\right)\leq e^{-cn^{r\land 1}}.$$
\begin{proof}
{ Since $\P(\tau_e\geq t)\leq (\beta e^{\alpha}) e^{- \alpha t^{r\land 1}}$ for $t\geq 0$, without loss of generality, we can suppose $r\leq 1$.} We take a slightly different approach from that in \cite[Lemma 3.1]{CZ03}.
%\begin{lem}\label{exp:large}
 % Suppose \eqref{cond:distr}. Then there exists $c,C>0$ such that for any $x\in\Z^d$ and $t>C$,
 % $$\P({\rm T}_{[-|x|_1,|x|_1]^d}(0,x)>t|x|_1)\leq \exp{(-ct^r |x|_1^r)}.$$
%\end{lem}
By \eqref{kingman} and ${\rm T}_{[-M,M]^d}\left(0,K\mathbf{e}_1\right)\to {\rm T}_{\mathbb Z^d}\left(0,K\mathbf{e}_1\right)$ a.s.\ as $M\to\infty$, for any $s,\e>0$, there exist $K<M\in\N$ such that
%{(does it simply follow from that only? Do we know that ${\rm T}_{[-M,M]^d}\left(0,K\mathbf{e}_1\right)\to {\rm T}_{\mathbb Z^d}\left(0,K\mathbf{e}_1\right)$ a.s.\ as $M\to\infty$? do we know that there is almost surely a minimizing path $\gamma$ for ${\rm T}_{\mathbb Z^d}\left(0,K\mathbf{e}_1\right)$? (if this is true then the convergence before is obvious))} { We can prove ${\rm T}_{[-M,M]^d}\left(0,K\mathbf{e}_1\right)\to {\rm T}\left(0,K\mathbf{e}_1\right)$ a.s. as follows: for any $M>0$, we can take a finite path $\gamma:0\to K\mathbf{e}_1$ such that ${\rm T}\left(0,K\mathbf{e}_1\right)\geq {\rm T}(\gamma)-M$. Then if we take $M$ such that $\gamma\subset [-M,M]^d$, then
 % $${\rm T}_{[-M,M]^d}\left(0,K\mathbf{e}_1\right)={\rm T}(\gamma)<{\rm T}\left(0,K\mathbf{e}_1\right)+M.$$
%}
\ben{\label{LLN:box}
\P\left({\rm T}_{[-M,M]^d}\left(0,K\mathbf{e}_1\right)\geq \mu\left(1+\frac{\e}{2}\right)K\right)< s.
}
Given $\e>0$, let
$$s=s(\e)=\frac{\e^2}{16\E\tau_e^2},$$
and $K=K(\e,s)$, $M=M(\e,s)$ in \eqref{LLN:box}. We write
$$\S=\R \times[-M,M]^{d-1}.$$
Let $n>KM$. If we take $\ell=\lf n/(KM)\rf$, then by the triangular inequality,
\al{
  & \P\left({\rm T}_{\S}\left(0,n\mathbf{e}_1\right)\geq (\mu+\e)n\right)\\
    &\leq \P\left({\rm T}_{\S}\left(0,(KM \ell) \mathbf{e}_1\right)+{\rm T}_{\S}\left((KM \ell) \mathbf{e}_1,n\mathbf{e}_1\right)\geq \left(\mu+\frac{\e}{2}\right)KM \ell +\frac{\e}{2}n\right)\\
  &\leq \P\left({\rm T}_{\S}\left(0,(KM \ell) \mathbf{e}_1\right)\geq \left(\mu+\frac{\e}{2}\right) KM \ell\right)+\P\left({\rm T}_{\S}\left((KM\ell) \mathbf{e}_1,n\mathbf{e}_1\right)\geq \frac{\e}{2}n\right),
  }
and, by Lemma~\ref{exp:est} and { $r\leq 1$ that we assumed at the beginning of the proof}, the second term can be bounded from above by $e^{-c n^{r\land 1}}$ with some $c=c(\e,K,M)>0$.
Hence, without loss of generality, we can assume $n$ is divisible by $KM$, say $n=KM\ell$ with $\ell\in\N$. Given $i\in\{0,\cdots,M\ell-1\}$, we define
$${\rm T}^{[i]}={\rm T}_{(iK\mathbf{e}_1+[-M,M]^d)}(iK\mathbf{e}_1,(i+1)K\mathbf{e}_1).$$
It follows from the definition and the triangular inequality that if $|i-j|\geq M$, then ${\rm T}^{[i]}$ and ${\rm T}^{[j]}$ are independent and
$${\rm T}_{\S}\left(0,n\mathbf{e}_1\right)\leq \sum_{i=0}^{M\ell-1} {\rm T}^{[i]}.$$
%Note that by Lemma~\ref{exp:est},
%$$\P({\rm T}_{\S}(0,KM\mathbf{e}_1)\geq \e n)+\P({\rm T}_{\S}((n-KM)\mathbf{e}_1,n\mathbf{e}_1)\geq \e n)\leq e^{-cn^r},$$
%with some $c>0$.
Thus, by the union bound and $n=KM\ell$,
\al{
  \P\left({\rm T}_{\S}\left(0,n\mathbf{e}_1\right)\geq (\mu+\e)n\right) &\leq \P\left( \sum_{i=0}^{M\ell-1} {\rm T}^{[i]}\geq (\mu+\e)n\right)\\%+\P({\rm T}_{\S}(0,KM\mathbf{e}_1)\geq \e n)+\P({\rm T}_{\S}((n-KM)\mathbf{e}_1,n\mathbf{e}_1)\geq \e n)
  &\leq  \P\left( \sum_{m=0}^{M-1}\sum_{i=0}^{\ell-1} {\rm T}^{[Mi+m]}\geq (\mu+\e)KM\ell \right)\\%+e^{-cn^r}\\
  &\leq  \sum_{m=0}^{M-1}\P\left( \sum_{i=0}^{\ell-1} {\rm T}^{[Mi+m]}\geq (\mu+\e)K\ell \right).%+e^{-cn^r}.
}
We only consider the case $m=0$, since the other cases can be treated in the same way. Then,
\aln{
  & \P\left( \sum_{i=0}^{\ell-1} {\rm T}^{[Mi]}\geq (\mu+\e)K\ell\right)\nonumber\\
  & = \P\left( \sum_{i=0}^{\ell-1} {\rm T}^{[Mi]}\mathbf{1}_{\{{\rm T}^{[Mi]}< (\mu+(\e/2))K\}}+\sum_{i=0}^{\ell-1} {\rm T}^{[Mi]}\mathbf{1}_{\{{\rm T}^{[Mi]}\geq (\mu+(\e/2))K\}}\geq (\mu+\e)K\ell\right)\nonumber\\
  &\leq \P\left( \sum_{i=0}^{\ell-1}  (\mu+(\e/2))K+\sum_{i=0}^{\ell-1} {\rm T}^{[Mi]}\mathbf{1}_{\{{\rm T}^{[Mi]}\geq (\mu+(\e/2))K\}}\geq (\mu+\e)K\ell\right)\nonumber\\
  &= \P\left( \sum_{i=0}^{\ell-1} {\rm T}^{[Mi]}\mathbf{1}_{\{{\rm T}^{[Mi]}\geq (\mu+(\e/2))K\}}\geq \e K\ell/2\right).\label{comp}
}
For $k\in\N$, let us denote
$${\rm e}{[k]}=\la k\mathbf{e}_1,(k+1)\mathbf{e}_1\ra.$$
Then, since
$${\rm T}^{[Mi]}\leq \displaystyle\sum_{k=KMi}^{KMi+K-1}\tau_{{\rm e}{[k]}},$$ \eqref{comp} can be bounded from above by
\al{
  & \P\left(\sum_{i=0}^{\ell-1}\sum_{k=KMi}^{KMi+K-1}\tau_{{\rm e}{[k]}}\mathbf{1}_{\{{\rm T}^{[Mi]}\geq (\mu+(\e/2))K\}} \geq \e K\ell/2\right)\\
  &=\P\left(\sum_{k=0}^{K-1}\sum_{i=0}^{\ell-1}\tau_{{\rm e}{[KMi+k]}}\mathbf{1}_{\{{\rm T}^{[Mi]}\geq (\mu+(\e/2))K\}} \geq \e K\ell/2\right)\\
  &\leq \sum_{k=0}^{K-1} \P\left(\sum_{i=0}^{\ell-1}\tau_{{\rm e}{[KMi+k]}}\mathbf{1}_{\{{\rm T}^{[Mi]}\geq (\mu+(\e/2))K\}} \geq \e\ell /2\right).
  }
We write $X^{[k]}_i=\tau_{{\rm e}{[KMi+k]}}\mathbf{1}_{\{{\rm T}^{[Mi]}\geq (\mu+(\e/2))K\}}$. Then $(X^{[k]}_i)_i$ are identically and independent non-negative random variables. By \eqref{LLN:box} and the Cauchy--Schwarz inequality,
\al{\E X^{[k]}_i&\leq (\E\tau_{e}^2)^{1/2}\P\left({\rm T}^{[Mi]}\geq (\mu+(\e/2))K\right)^{1/2}\\
  &=(\E\tau_{e}^2)^{1/2}\P\left({\rm T}_{[-M,M]^d}(0,K\mathbf{e}_1)\geq (\mu+(\e/2))K\right)^{1/2}\\
  &< (\E\tau_{e}^2)^{1/2} s^{1/2}= \e/4,
}
and for $t>0$, $\P(X^{[k]}_i>t)\leq \P(\tau_e>t)\leq \beta e^{-\alpha t^r},$ where $\alpha,\beta$ are in \eqref{UB condition}. For $r<1$, by \cite[(4.2)]{GRR}, there exists $c=c(\e,K,M)>0$ such that
\ben{\label{LDP:r<1}
  \P\left(\sum_{i=0}^{\ell-1}X^{[k]}_i \geq \e\ell /2\right) \leq e^{-cn^{r\land 1}}.
}
On the other hand, for $r\geq 1$, a standard large deviation (Cram\'er Theorem) proves \eqref{LDP:r<1}, e.g. \cite[Theorem~2.2.2,~Lemma~2.2.5]{DZ10}. Putting things together, we have that there exists $n_0$ such that for $n\geq n_0$,
$$\P\left({\rm T}_{\R\times[-M,M]^{d-1}}\left(0,n\mathbf{e}_1\right)\geq (\mu+\e)n\right)\leq 2MK e^{-cn^{r\land 1}}\leq e^{-c'n^{r\land 1}}.$$
 By $\mu\leq \E\tau_e$, since the support of the distribution of $\tau_e$ { intersects} $[0,\mu+\epsilon)$, %if we take $M>n_0$, 
 $$\max_{n\in\Iintv{1, n_0}}\P(\rmT_{\R\times[-M,M]^{d-1}}\left(0,n\mathbf{e}_1\right)\geq (\mu+\e)n)<1.$$
Hence we can find $c'$ such that for $n\in \Iintv{1, n_0}$,
 $$\P\left({\rm T}_{\R\times[-M,M]^{d-1}}\left(0,n\mathbf{e}_1\right)\geq (\mu+\e)n\right)\leq e^{-c'n^{r\land 1}},$$
 which completes the proof.
\end{proof}

\section{Proof of (\ref{thm:main2})}\label{slow-vary}
The proof is essentially the same as in Theorem~\ref{thm:main1}. We only touch with the difference.
We use the Karamata representation for a slowly varying function:
\begin{lem}{\cite[Theorem 1.3.1]{BGT87}}\label{lem: Karamata}
 For any slowly varying function  $\ell(t)$, there exist bounded measurable functions $c(t),\e(t)$ and constants $a>0$ and $c\in\R$, such that for any  $t>a$,
\ben{\label{Karamata repre}
\ell(t)=\exp{\left(c(t)+\int_{a}^t \frac{\e(s)}{s} {\rm d}s\right)},
}
 $\lim_{t\to\infty} c(t)=c$ and $\lim_{t\to\infty} \e(t)=0$.
\end{lem}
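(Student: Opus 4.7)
The plan is to follow the classical Karamata approach, passing to logarithmic variables to turn the multiplicative quasi-invariance $\ell(\lambda t)/\ell(t)\to 1$ into an additive one, and then smoothing by averaging over a unit interval to obtain a differentiable approximant whose derivative tends to zero.

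First I would invoke the \emph{uniform convergence theorem} for slowly varying functions: if $\ell$ is slowly varying, then $\ell(\lambda t)/\ell(t)\to 1$ uniformly for $\lambda$ in any compact subset of $(0,\infty)$. (This is the main hard input; in BGT it is proved by a Baire-category / Steinhaus-type argument on the measurable set where the limit is approached, and I would cite or reproduce that step. Without measurability of $\ell$ the statement fails, so the hypothesis of measurable $\ell$ is essential.) Setting $H(x)=\log \ell(e^x)$, uniform convergence rephrases as: for every compact $K\subset\R$,
\begin{equation*}
\sup_{u\in K}\,|H(x+u)-H(x)|\longrightarrow 0\quad\text{as }x\to\infty.
\end{equation*}

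Next I would smooth $H$ by averaging: define
\begin{equation*}
\tilde H(x)=\int_0^1 H(x+t)\,dt.
\end{equation*}
From the uniform convergence on $K=[0,1]$, $\tilde H(x)-H(x)\to 0$. Moreover $\tilde H$ is absolutely continuous in $x$ with
\begin{equation*}
\tilde H'(x)=H(x+1)-H(x)=:\delta(x),\qquad \delta(x)\to 0.
\end{equation*}
Applying the fundamental theorem of calculus, for $x\geq a$,
\begin{equation*}
\tilde H(x)=\tilde H(a)+\int_a^x \delta(s)\,ds,
\end{equation*}
and hence
\begin{equation*}
H(x)=\bigl[\tilde H(a)+(H(x)-\tilde H(x))\bigr]+\int_a^x \delta(s)\,ds=:\tilde c(x)+\int_a^x \delta(s)\,ds,
\end{equation*}
with $\tilde c(x)\to \tilde H(a)=:c$ and $\delta(s)\to 0$.

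Finally I would substitute back $t=e^x$, $u=e^s$ to switch from $H$ to $\ell$:
\begin{equation*}
\log \ell(t)=\tilde c(\log t)+\int_a^{\log t}\delta(s)\,ds
=\tilde c(\log t)+\int_{e^a}^{t}\frac{\delta(\log u)}{u}\,du,
\end{equation*}
and set $c(t):=\tilde c(\log t)$, $\e(u):=\delta(\log u)$, $a':=e^a$ so that
\begin{equation*}
\ell(t)=\exp\!\left(c(t)+\int_{a'}^t \frac{\e(u)}{u}\,du\right),
\end{equation*}
with $c(t)\to c$ and $\e(u)\to 0$. Measurability and boundedness of $c,\e$ on $[a',\infty)$ follow from the construction (ultimately from that of $\ell$) after possibly enlarging $a'$ so that both are bounded. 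The main obstacle is justifying the uniform convergence theorem rigorously for measurable (but not necessarily continuous) $\ell$; every other step is a routine smoothing and change-of-variables computation.
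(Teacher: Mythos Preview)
Your proof is correct and is essentially the standard argument given in \cite[Theorem~1.3.1]{BGT87} itself. Note, however, that the paper does not supply its own proof of this lemma: it is stated purely as a citation from \cite{BGT87} and used as a black box (together with its corollary, Lemma~\ref{local unif SV}). So there is no ``paper's proof'' to compare against; you have simply reproduced the classical Karamata smoothing argument that the cited reference contains.
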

We use the following lemma, called the local uniformity of slowly varying functions, which directly follows from the Karamata representation:
\begin{lem}{\cite[Theorem~1.2.1]{BGT87}}\label{local unif SV}
 For any slowly varying function  $\ell(t)$, the convergence 
$$\lim_{t\to \infty}\frac{\ell(at)}{\ell(t)}=1,$$
is uniform if $a$ is restricted to a compact interval in $(0,\infty)$.
\end{lem}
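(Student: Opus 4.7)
The plan is to deduce the uniform convergence directly from the Karamata representation supplied in Lemma~\ref{lem: Karamata}. Fix a compact interval $[A,B]\subset(0,\infty)$; without loss of generality, one may assume $A\le 1\le B$ (otherwise shrink/extend). For $t$ large enough so that both $t>a_0$ (the constant $a$ of Lemma~\ref{lem: Karamata}, which I rename $a_0$ to avoid clashing with the variable $a$ of the present lemma), the representation gives, for any $a\in[A,B]$,
\[
\frac{\ell(at)}{\ell(t)}=\exp\!\left(c(at)-c(t)+\int_{t}^{at}\frac{\varepsilon(s)}{s}\,ds\right),
\]
so it suffices to show that the exponent tends to $0$ uniformly in $a\in[A,B]$.

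For the first piece, since $c(s)\to c$ as $s\to\infty$, given $\delta>0$ pick $T_0$ with $|c(s)-c|<\delta/2$ for $s>T_0$. Then for $t>T_0/A$ and every $a\in[A,B]$ both $t$ and $at\ge At>T_0$, so $|c(at)-c(t)|<\delta$. For the integral, substitute $s=tu$ to obtain
\[
\int_{t}^{at}\frac{\varepsilon(s)}{s}\,ds=\int_{1}^{a}\frac{\varepsilon(tu)}{u}\,du.
\]
Since $\varepsilon(s)\to 0$, given $\delta>0$ choose $T_1$ with $|\varepsilon(s)|<\delta$ for $s>T_1$; for $t>T_1/A$ and $u$ in the compact set $[\min(1,A),\max(1,B)]$ one has $tu>T_1$. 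Therefore
\[
\left|\int_{1}^{a}\frac{\varepsilon(tu)}{u}\,du\right|\le \delta\,\bigl|\log a\bigr|\le \delta\,\log(B/A),
\]
uniformly in $a\in[A,B]$.

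Combining the two estimates, for any $\eta>0$ I can choose $t$ large enough (depending only on $A,B,\eta$, not on $a$) so that the exponent has absolute value at most $\eta$, whence $\ell(at)/\ell(t)\to 1$ uniformly in $a\in[A,B]$. This is essentially a one-computation argument, and no serious obstacle arises; the only point requiring mild care is handling both $a<1$ and $a>1$ simultaneously, which is done by bounding $|\log a|$ by $\log(B/A)$ and by noting that the substitution $s=tu$ is valid irrespective of the orientation of the integration interval. The conclusion is exactly the statement of Lemma~\ref{local unif SV}.
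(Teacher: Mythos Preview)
Your proof is correct and follows exactly the route the paper indicates: the paper does not give a detailed argument but simply remarks that Lemma~\ref{local unif SV} ``directly follows from the Karamata representation'' (Lemma~\ref{lem: Karamata}), and your computation is precisely the standard way of filling in those details.
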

 By  Lemma~\ref{local unif SV}, for sufficiently large $n$, the first term of \eqref{lower-est} is bounded from below by
 \al{
   &\qquad \beta_1((\xi+\e)n)^{2d}\exp{(-2d \alpha_1((\xi+\e)n)((\xi+\e)n)^r)}\\
   &\geq \exp{(-2d (1+\e) \alpha_1(n)((\xi+\e)n)^r)}.
   }
Thus, \eqref{lower-main} is replaced by
$$  \P({\rm T}_n>(\mu+\xi)n)\geq \frac{1}{2}\exp{(-2d (1+\e) \alpha_1(n)((\xi+\e)n)^r))}.$$
For the lower bound, the rest is the same as before. For the upper bound, the proof is exactly the same as in Theorem~\ref{thm:main1} except for Lemma~\ref{lem:Zhang} and Lemma~\ref{exp:est}. Lemma~\ref{exp:est} is replaced by the following lemma.

\begin{lem}\label{exp:est2}
  Let $(X_i)_{i=1}^k$ be identical{ly} and independent{ly} { distributed} satisfying \eqref{cond-distr2} with $r\leq1$. Then for any $c>0$ there exists $n_0=n_0(k,c)$ such that for any $n\geq n_0$,
  $$\P\left(\sum_{i=1}^k X_i>n\right)\leq  \exp{(-(1-c)\alpha_2(n) n^r)}.$$
\end{lem}
\begin{proof}

Given $c,\rho>0$, we choose a random variable $Y$ that has a continuous density $C_{c,\rho} e^{-(1-\frac c 4) \alpha_2(t) t^r}\mathbf{1}_{\{t\geq \rho\}}$ with some $C_{c,\rho}>0$. Then, for $\rho>0$ large enough, $C_{c,\rho}\geq 1$ and  $X$ is stochastically { dominated by} $Y$ since $Y\geq \rho$ a.s. and for $t\geq \rho$,
\al{
\P(Y\geq t)&= \int_{t}^\infty C_{c,\rho}\, e^{-(1-\frac c 4) \alpha_2(s) s^r}{\rm d}s\\
&\geq e^{-(1-\frac c 4)\sup_{s\in [t,t+1]} \alpha_2(s) s^r}\\
&\geq \beta_2(t)\, e^{-\alpha_2(t) t^r}\geq \P(X\geq t),
}
where we have used Lemma~\ref{local unif SV} and $\beta_2(t)\ll e^{\frac{c}{4} \alpha_2(t) t^r}$ for $t$ large enough.

 Hence, it suffices to check that for i.i.d. random variables $Y_i$ that have the same law as $Y$, 
$$\P\left(\sum_{i=1}^k Y_i>n\right)\leq  \exp{(-(1-c)\alpha_2(n) n^r)}.$$
By  Lemma~\ref{lem: Karamata}, for any $\e>0$, there exists $t_0>0$ such that for any $t>t_0$,
\ben{\label{SV prop}
\inf_{s\geq t} \alpha_2(s) s^r \geq (1-\e)\alpha_2(t) t^r.
} Hence, for $n$ large enough,
\al{
\P\left(\exists i\in \Iintv{1,k}\text{ s.t. }Y_i> n \right)&\leq k\P(Y> n)\\
&\leq k C_{c,\rho} \int_{n}^\infty e^{-(1-\frac c 4) \alpha_2(t) t^r}{\rm d} t\\
&\leq k C_{c,\rho}\, e^{-(1-\frac c 2)\inf_{t\geq n} \alpha_2(t) t^r} \int_{0}^\infty e^{-\frac c 4 \alpha_2(t) t^r} {\rm d} t\\
&\leq \exp{\left(-(1-\frac{2c}{3}) \alpha_2(n) n^r\right)}.
}
On the other hand, we have
\al{
&\qquad \P\left(\sum_{i=1}^k Y_i>n,\,Y_i\leq n,\,\forall i\in \Iintv{1,k} \right)\\
&\leq \int_{\R^k_+} \prod_{i=1}^k C_{c,\rho} e^{-(1-\frac{c}{4})\alpha(y_i) y_i^r} {\mathbf{1}_{\sum_{i=1}^k y_i\geq n,\,y_i\leq n}} {\rm d} \mbox{\boldmath $y$}\\
&\leq (C_{c,\rho})^k\, e^{-\left(1-\frac{c}{2}\right) \inf\{\sum_{i=1}^k \alpha_2(y_i) y_i^r:~\sum_{i=1}^k y_i\geq n,\,y_i\in[0,n]  \}}\int_{\R^k_+} \prod_{i=1}^k e^{-\frac{c}{4}\alpha_2(y_i) y_i^r} {\rm d} \mbox{\boldmath $y$}\\
&\leq C_{c,\rho,k}\, \exp{\left(-\left(1-\frac{c}{2}\right) \inf\left\{\sum_{i=1}^k \alpha_2(y_i) y_i^r:~\sum_{i=1}^k y_i\geq n  ,\,y_i\in[0,n]\right\}\right)},
}
where $C_{c,\rho,k}>0$ is a constant independent of $n$. Let $(y_i)_{i=1}^k\in [0,n]^k$ be such that $\sum_{i=1}^k y_i\geq n$. Then, by Lemma~\ref{local unif SV}, for $n$ large enough,  we get
\al{
\sum_{i=1}^k \alpha_2(y_i) y_i^r&\geq \sum_{i=1}^k \alpha_2(y_i) y_i^r {\mathbf{1}_{y_i\geq \left(\frac{c}{4k}\right)^{1/r} n}}\\
&\geq \left(1-\frac{c}{4}\right) \alpha_2(n) \sum_{i=1}^k y_i^r {\mathbf{1}_{y_i\geq \left(\frac{c}{4k}\right)^{1/r} n}}\\
&\geq \left(1-\frac{c}{4}\right) \alpha_2(n) \left( \sum_{i=1}^k y_i^r - \frac{cn^r}{4}\right)\geq \left(1-\frac{c}{2}\right) \alpha_2(n) n^r,
}
where we have used \eqref{concave} in the last line. Putting things together, we have
\al{
\P\left(\sum_{i=1}^k Y_i>n\right)\leq  e^{-(1-\frac{2c}{3}) \alpha_2(n) n^r}+ C_{c,\rho,k} e^{-\left(1-\frac{c}{2}\right)^2 \alpha_2(n) n^r}\leq e^{-(1-c) \alpha_2(n) n^r}.
}
\end{proof}
Lemma~\ref{lem:Zhang} is replaced by the following. 
\begin{lem}
For any $\e>0$, there exist $M=M(\e)\in\N$ and a positive constant $c=c(\e)$ such that for $n\in\N$ large enough,
$$\P\left({\rm T}_{\R\times[-M,M]^{d-1}}\left(0,n\mathbf{e}_1\right)\geq (\mu+\e)n\right)\leq e^{-c { \alpha_2}(n) n^r}.$$
\end{lem}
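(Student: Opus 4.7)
The strategy is to mirror the proof of Lemma~\ref{lem:Zhang} step by step, substituting the slowly varying tail bound \eqref{cond-distr2} for \eqref{UB condition} and using the local uniformity of slowly varying functions (Lemma~\ref{local unif SV}) to transport $\alpha_2(\cdot)$ between the various scales. The key new feature is that the bound $e^{-cn^{r\wedge 1}}$ must be strengthened to $e^{-c\alpha_2(n)n^r}$, which forces us to track the $\alpha_2$ factor through every step.

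To start, I pick $K$ and $M$ exactly as in the original proof, via Kingman's theorem and the convergence ${\rm T}_{[-M,M]^d}\to{\rm T}_{\Z^d}$, so that \eqref{LLN:box} holds with $s=\e^2/(16\E[\tau_e^2])$; this only requires $\E[\tau_e^2]<\infty$, which still follows from \eqref{cond-distr2}. Next, the triangular inequality reduces the task to $n=KM\ell$, the leftover piece of at most $KM$ edges being controlled by Lemma~\ref{exp:est2}, contributing at most $\exp(-c\,\alpha_2(n)n^r)$.

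The decomposition into boxes $iK\mathbf{e}_1+[-M,M]^d$ grouped by residues modulo $M$ yields independence of the passage times $T^{[Mi]}$, and after the same truncation as before, the problem reduces to the i.i.d.\ estimate
\[
\P\Bigl(\sum_{i=0}^{\ell-1} X_i^{[k]} \geq \tfrac{\e\ell}{2}\Bigr) \leq \exp\bigl(-c\,\alpha_2(n)\,n^r\bigr),
\]
where $X_i^{[k]}$ is non-negative with mean at most $\e/4$ (by Cauchy--Schwarz and \eqref{LLN:box}) and tail $\P(X_i^{[k]}>t)\leq \beta_2(t)e^{-\alpha_2(t)t^r}$. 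For this I would adapt the stochastic-domination argument of Lemma~\ref{exp:est2}: dominate each $X_i^{[k]}$ by a variable with density proportional to $e^{-(1-c/4)\alpha_2(t)t^r}\mathbf{1}_{\{t\geq\rho\}}$ for $\rho$ large, then split according to whether $\max_i X_i^{[k]}$ exceeds $\e\ell/2$; the latter event is bounded via a union bound and the Karamata representation (Lemma~\ref{lem: Karamata}), while on the former event the integral reduces to the infimum of $\sum_i \alpha_2(y_i)y_i^r$ subject to $\sum_i y_i \geq \e\ell/2$ with $y_i\in[0,\e\ell/2]$, which is bounded below using the concavity inequality \eqref{concave} combined with Lemma~\ref{local unif SV}.

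The main obstacle is transporting the slowly varying factor $\alpha_2$ between the scales $\ell$, $\e\ell/2$, the dummy variables $y_i$ and $n = KM\ell$, and absorbing the polynomial prefactors (the constant $C_{c,\rho}^\ell$ and the factor $K=K(\e)$ from summing over residue classes) into the exponent. Lemma~\ref{local unif SV} resolves both issues uniformly: on any compact ratio interval $[an,n]$ with $a>0$, $\alpha_2(t)/\alpha_2(n)\to 1$, so all scale mismatches contribute only $(1+o(1))$ perturbations of the exponent, which can be absorbed into $c$ at the cost of slightly reducing it. A final separate treatment for bounded $n$ mirrors the end of the original proof: since the support of $\tau_e$ meets $[0,\mu+\e)$ with positive probability, the maximum of the finitely many upper-bounded probabilities is strictly less than $1$, so the constant $c$ can be further reduced to cover all $n\geq 1$.
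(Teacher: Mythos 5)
Your overall strategy matches the paper's, which disposes of this lemma in a single line: ``Using Lemma~\ref{exp:est2} instead of Lemma~\ref{exp:est}, the proof is the same as in Lemma~\ref{lem:Zhang}.'' Your handling of the setup (choice of $K,M$ via \eqref{LLN:box}, reduction to $n=KM\ell$, slab decomposition, truncation by Cauchy--Schwarz) is correct and follows the paper. The leftover piece of at most $KM$ edges is indeed covered by Lemma~\ref{exp:est2}, since there the number of summands is fixed.

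The gap is in the central estimate $\P\bigl(\sum_{i=0}^{\ell-1}X_i^{[k]}\geq \e\ell/2\bigr)\leq \exp(-c\,\alpha_2(n)n^r)$, where $\ell=n/(KM)$ grows with $n$. You propose to handle this by adapting the stochastic-domination/integral argument of Lemma~\ref{exp:est2}, but that argument is tailored to a \emph{fixed} number $k$ of summands and does not survive $\ell\to\infty$. Three things go wrong when $r<1$. First, the prefactor you describe as polynomial, $C_{c,\rho}^{\ell}$ (together with the corresponding Gaussian-type normalization $\bigl(\int e^{-\frac{c}{4}\alpha_2(t)t^r}{\rm d}t\bigr)^{\ell}$), is in fact $\exp(c'\ell)\sim\exp(c''n)$; since $\alpha_2$ is slowly varying, $\alpha_2(n)n^r=o(n)$, so this prefactor swamps the would-be main term and cannot be absorbed into the constant $c$. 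Second, the domination requires $Y_i\geq\rho$ a.s.\ with $\rho$ large (so that $C_{c,\rho}\geq 1$), but then $\sum Y_i\geq\rho\ell$ a.s.\ and the event $\{\sum Y_i\geq \e\ell/2\}$ becomes trivial once $\rho\geq\e/2$. Third, the infimum argument of Lemma~\ref{exp:est2} throws away $y_i$ below the threshold $(c/(4k))^{1/r}n$; with $k$ replaced by $\ell$ this threshold tends to $0$, so local uniformity of $\alpha_2$ (Lemma~\ref{local unif SV}) no longer compares $\alpha_2(y_i)$ to $\alpha_2(n)$, and the concavity bound only yields a lower bound of order $n^r$, which is strictly weaker than $\alpha_2(n)n^r$ whenever $\alpha_2\to\infty$. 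What the paper's original proof actually invokes at this step is \cite[(4.2)]{GRR} for $r<1$ (and Cram\'er for $r=1$), i.e.\ a single-big-jump large deviation estimate for $\ell$ i.i.d.\ summands. To close your argument you would need an analogue of that big-jump bound for slowly varying stretched-exponential tails --- namely that $\P\bigl(\sum_{i<\ell}X_i\geq x\ell\bigr)$ is governed by $\ell\,\P(X_1\geq x\ell)$ up to a negligible correction --- not a re-run of the fixed-$k$ computation in Lemma~\ref{exp:est2}. (For $r=1$ your argument is fine since $\alpha_2$ is then assumed bounded away from $0$ and $\infty$, so $\exp(-c\ell)$ suffices.)
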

Using Lemma~\ref{exp:est2} instead of Lemma~\ref{exp:est}, the proof is the same as in Lemma~\ref{lem:Zhang}, so we omit this. 
\section{Proof of (\ref{thm:main4})}\label{slow-vary2}
We first consider the lower bound. We follow the argument in Section~\ref{section: lower bound r>1}, though the side lengths of the boxes are different. Let $\e>0$ arbitrary. We take $M>0$ such that $\lambda_{d,r}(M)\leq (1+\e) \lambda_{d,r}$. Let
 $${\rm T}^{[1]}_n = {\rm T}_{D_{M}(0)}(0,\partial D_{M}(0))\text{ and }{\rm T}^{[2]}_n = {\rm T}_{D_{M}(n \mathbf{e}_1)}(n\mathbf{e}_1,\partial D_{M}(n \mathbf{e}_1)).$$  We consider the following events:

\[F_1 = \left\{{\rm T}_n^{[1]} \geq \frac{(\xi+\varepsilon) n}{2}\right\}, \quad F_2 = \left\{ {\rm T}_n^{[2]} \geq \frac{(\xi+\varepsilon) n}{2}\right\},\]
and
\[G = \left\{\min_{x\in \partial D_{M}(0),y\in \partial D_{M}(n \mathbf{e}_1) } {\rm T}_{D_{M}(0)^c \cap D_{M}(n \mathbf{e}_1)^c}(x,y) \geq (\mu - \varepsilon) n \right\}.\]
 As \eqref{eq:lowerBoundFG}, we have
\begin{equation*}
\P\left({\rm T}_n\geq (\mu+\xi) n \right) \geq \P(F_1)^2 \,\P(G).
\end{equation*} 

We begin with estimating $\P(F_1)$. 
%Note that $\lambda_{d,r}(M) = 1$ when $r\leq 1$. 
%More generally, the function $M\to \lambda_{d,r}(M)$ is non-increasing and we will see that
%\[
%(i) \lambda_{d,r}(M)\to\lambda_{d,r} > 0 \text{ when } r<d
%\]  so that $\lambda_{d,r}(M)\to\lambda_{d,r}$ as $M\to\infty$. We now fix $M$ large enough such that $\lambda_{d,r}(M)  \leq  \lambda_{d,r} + \varepsilon$. 
Let $(t_e^\star)_{e\in E_{M}}$ that minimizes $\lambda^P_{d,r}(D_{M}(0),\partial D_{M}(0))$. In particular,
\[
\left\{\forall e\in E_{M},  \tau_e \geq \frac{(\xi + \varepsilon)n}{2} t_e^\star\right\} \subset  \left\{{\rm T}_n^{[1]} \geq \frac{(\xi+\varepsilon) n}{2}\right\} = F_1.\]
On the other hand,
%\[
%\P\left(\forall e\in E_{\ell_M(n)},  \tau_e  \geq \frac{(\xi + \varepsilon)n}{2} t_e^\star\right) \geq \prod_{e\in E_{\ell_M(n)}} \int_{\frac{\xi + \varepsilon}{2} n t_e^\star}^\infty \beta_1 e^{-\alpha t^r}{\rm d} t.
%\]
%Let $a=\frac{(\xi+\varepsilon)} {2} n t_e^\star$. By integration by part, for all $c>0$,
%\[ \int_{a}^\infty e^{- c t^{r}} {\rm d} t = \int_{a}^\infty \frac{cr t^{r-1}}{cr t^{r-1}}e^{- c t^{r}} {\rm d} t = \frac{e^{-ca^r}}{cr a^{r-1}} - \int_{a}^\infty \frac{(r-1)e^{-ct^r}}{cr t^{r}},\]
%while

%\[\int_{a}^\infty \frac{(r-1)e^{-ct^r}}{cr t^{r}} \leq \int_{a}^\infty \frac{cr t^{r-1}}{cr a^{r-1}} \frac{(r-1)e^{-ct^r}}{cr a^{r}} = \frac{(r-1)e^{-ca^r}}{c^2r^2 a^{2r-1}}.
%\]
%Therefore, for $a\geq A=A(\varepsilon)$ large enough that we fix for now on,
%\[
%\int_{a}^\infty \beta_1 e^{-\alpha t^r} {\rm d} t
%\geq e^{\varepsilon a^r} \int_{a}^\infty \beta_1 e^{-(\alpha + \varepsilon) t^r} {\rm d} t \geq e^{-(\alpha + \varepsilon) a^r}.
%\]
%Moreover, there is $C_\varepsilon>0$ such that if $a\leq A$,
%\[\int_{a}^\infty \beta_1 e^{-\alpha t^r} {\rm d} t
%\geq e^{-(\alpha-\varepsilon) a^r} \int_{A}^\infty \beta_1 e^{-\varepsilon t^r} {\rm d} t \geq C_\varepsilon  e^{-(\alpha - \varepsilon) a^r}.  \]
%In any case,
\begin{align*}
 & \qquad \P\left(\forall e\in E_{M},  \tau_e  \geq \frac{(\xi + \varepsilon)n}{2} t_e^\star\right) \\
&=  \prod_{e\in E_{M}}\P\left( \tau_e  \geq \frac{(\xi + \varepsilon)n}{2} t_e^\star\right) \\
  &\geq \left(\prod_{e:~t_e^\star>0}\,\beta_1\left(\frac{(\xi+\varepsilon)n}{2}t_e^\star\right)\right) \exp{\left(- \left(\frac{(\xi + \varepsilon)n}{2}\right)^r \sum_{e\in E_{M}} \alpha_1\left(\frac{(\xi + \varepsilon)t_e^\star n}{2}\right) (t_e^\star)^r\right)}\\
  & \geq \exp{\left(-(1+\e)\alpha_1(n) \left(\frac{(\xi + \varepsilon)n}{2}\right)^r \lambda_{d,r}(M)\right)}\\
  &\geq \exp{\left(-(1+\e)^2\alpha_1(n) \left(\frac{(\xi + \varepsilon)n}{2}\right)^r \lambda_{d,r}\right)},
\end{align*}
where we have used \eqref{def:lambdaIntro} and Lemma~\ref{local unif SV}. This yields
\begin{equation*}
\varliminf_{n\to\infty} \frac{1}{\alpha_1(n)\,n^r} \log \P(F_1) \geq - (1+\e)^2 \left(\frac{\xi + \varepsilon}{2}\right)^r  \lambda_{d,r}. 
\end{equation*}
As in Section~\ref{section: lower bound r>1}, we obtain $\lim_{n\to\infty} \P(G) = 1$. Putting things together,  letting $n\to\infty$ followed by $\varepsilon\to 0$,  we obtain:
\begin{equation*}
 \varliminf_{n\to\infty} \frac{1}{\alpha_1(n) \, n^r} \log \P\left({\rm T}(0,nx) \geq (\mu+\xi) n \right)\geq -  2^{1-r} \xi^r \lambda_{d,r}.
 \end{equation*}

Next, we consider the upper bound. We write $D_M=D_M(0)$ and $\partial D_M=\partial D_M(0)$. We follow the arguments in Section~\ref{subsec:proofUpperBound} and use the same notations except for $\ell_M(n)$, where we use $\ell^{\chi}(n)=n^\chi$ with a fixed constant $\chi\in \left(\frac{r-1}{d-1},\frac{r}{d} \right)$ instead of $\ell_M(n)$. We note that by Lemma~\ref{lem: Karamata}, for any $p<r$, there exist $\alpha,\beta>0$ such that 
$$\P(\tau_e\geq t)\leq \beta e^{-\alpha t^{p}}.$$
We can apply Lemma~\ref{lem:Zhang} and Theorem~\ref{prop1} with $p$ close enough to $r$ so that $p \chi >r$ to estimate $A_2^{[1]},A_2^{[2]},B_2$. In fact, similarly to \eqref{eq:sumIndependent}, we obtain that for any $L>0$ and $n$ large enough
\begin{equation*}
\begin{aligned}
  & \P({\rm T}_n  > (\mu + \xi)n) \\
  &\leq e^{-L\alpha_2(n) n^r}+\sum_{k=0}^{\lf(\xi - 3\varepsilon) n\rf} \P\left( {\rm T}^{[1]}_n\geq k \right) \P({\rm T}^{[2]}_n  \geq (\xi - 3\varepsilon) n-k-1),
\end{aligned}
\end{equation*}
where 
\[{\rm T}^{[1]}_n:={\rm T}_{D_{2\ell^{\chi}(n)}(0)}\left(0,I_{2\ell^{\chi}(n)}^{[1]}\right),\] 
\[{\rm T}^{[2]}_n:={\rm T}_{D_{2\ell^{\chi}(n)}(n\mathbf{e}_1)}\left(n\mathbf{e}_1,I_{2\ell^{\chi}(n)}^{[2]}\right).\] 
Given $\e,\rho>0$, we choose a random variable $X$ that has a continuous density $c_{\e,\rho} e^{-(1-\frac \e 2) \alpha_2(t) t^r}\mathbf{1}_{\{t\geq \rho\}}$ with some $c_{\e,\rho}>0$. Then, for $\rho>0$ large enough, $\tau_e$ is stochastically dominated by $X$  as in the proof of Lemma~\ref{exp:est2}. We further suppose that for any $t>\rho$, the Karamata representation \eqref{Karamata repre} holds for $\alpha_2(t)$. Similarly to \eqref{eq:boundLaplace}, since $X\geq \rho$ a.s., we obtain that
\begin{equation*} 
\qquad \P\left( {\rm T}^{[1]}_{n}    \geq k \right)
   \leq  e^{c_{\varepsilon,d} \ell^{\chi}(n)^d} e^{-(1-\varepsilon) k^r  {\lambda}^{\rho,k}_{d,r}(2\ell^{\chi}(n))},
\end{equation*}
where we define
\al{
&\qquad \lambda_{d,r}^{\rho,k}(M)\\
&=\inf_{(t_e)\in [\frac{\rho}{k},\infty)^{E_M}} \left\{\sum_{e\in E_M} \alpha_2(k t_e) \,t_e^{r}\ \middle|~\forall \gamma:0\to \partial D_M\text{ with $\gamma\subset D_M$},\,\sum_{e\in\gamma}t_e\geq 1\right\}.
}
 Lemma~\ref{lem: Karamata} shows that  for any $\e\in(0,1)$, if $k$ is sufficiently large, then  
\ben{\label{SV prop2}
\frac{\alpha_2(k)}{\alpha_2(k t)}\leq t^{r},\,\forall t\in[1+\e,\infty),\,\frac{\alpha_2(k t)}{\alpha_2(k)}\geq (1-\e)t^{\e},\,\forall t\in\left[\frac{\rho}{k},1+\e\right].
}
The first inequality  shows that $\alpha_2(k)\leq \alpha_2(k t) t^r$ for $t>1+\e$ and thus, for $k$ large enough, if $(t_e^\star)$ is the minimizer of $\lambda_{d,r}^{\alpha,k}(2\ell^{\chi}(n))$, then $t_e^\star\leq 1+\e$ for any $e\in E_{2\ell^{\chi}(n)}$. Hence, by the second inequality of \eqref{SV prop2}, we have  $$\lambda_{d,r}^{\rho,k}(2\ell^{\chi}(n))\geq (1-\e) \alpha_2(k) \lambda_{d,r+\e}(2\ell^{\chi}(n))\geq (1-\e) \alpha_2(k) \lambda_{d,r+\e},$$ for $k$ large enough and we obtain
\al{
\P\left( {\rm T}^{[1]}_{n}    \geq k \right)
   \leq  \exp{\left(c_{\varepsilon,d} \ell^{\chi}(n)^d-\alpha_2(k) (1-\varepsilon)^2 k^r  {\lambda}_{d,r+\e}\right)}.
}
The same estimate holds for ${{\rm T}}^{[2]}_n$. We note that either $k$ or $(\xi - 3\varepsilon) n-k-1$ is large.
% \textcolor{blue}{Is this obvious that $\lambda_M$ does not go to $0$ as $M\to\infty$?} \textcolor{red}{Good question. We can prove that it is positive by note on LDP.pdf. We will discuss it later.} 
Therefore, we get
\begin{align*}
&\qquad \sum_{k=0}^{\lf(\xi - 3\varepsilon) n\rf} \P\left( {\rm T}^{[1]}_n\geq k \right) \P({\rm T}^{[2]}_n  \geq (\xi - 3\varepsilon) n-k-1)\\
& \leq e^{2c_{\varepsilon,d} M^d \ell^{\chi}(n)^d} \sum_{k=0}^{\lf(\xi - 3\varepsilon) n\rf}        
 e^{-(1-2\varepsilon)\left( \alpha_2(k) k^r +\alpha_2((\xi - 3\varepsilon)n-k) ((\xi - 3\varepsilon)n-k)^r\right) \lambda_{d,r+\e}} \nonumber\\
& \leq \xi n \, \exp{\left(2c_{\varepsilon,d} M^d \ell^{\chi}(n)^d-2\alpha_2(n) (1-3\varepsilon)(\frac{\xi - 3\varepsilon}{2})^r n^r \lambda_{d,r+\e}\right)}, 
\end{align*}
where we have used $(s\xi)^r + ((1-s)\xi)^r\geq 2 (\xi/2)^r$ and  Lemma~\ref{local unif SV} in the last line.  Together with $\ell^{\chi}(n)^d\ll \alpha_2(n) n^r$, $\lim_{\e\to 0}\lambda_{d,r+\e}=\lambda_{d,r}$,  letting $n\to\infty$ followed by $\varepsilon\to 0$,  we obtain:
\begin{equation*} 
 \varlimsup_{n\to\infty} \frac{1}{\alpha_2(n) \, n^r} \log \P\left({\rm T}(0,nx) \geq (\mu+\xi) n \right)\geq -  2^{1-r} \xi^r \lambda_{d,r}.
 \end{equation*}

\section*{Acknowledgements}
 We thank Ofer Zeitouni for suggesting Theorem \ref{th:rLessThandBig1Loc}. {We thank the referee for his valuable comments}. We also thank Bobo Hua and Florian Schweiger for helpful discussions. The second author would like to thank Ryoki Fukushima introducing \cite[Lemma~3.1]{CZ03}. The first author acknowledges that this project has received funding from the European Research
Council (ERC) under the European Union Horizon 2020 research and innovation program
(grant agreement No. 692452).  The second author is partially supported by  JSPS KAKENHI 19J00660 and SNSF grant 176918.

\end{document}